\def \E {\mathbb E}
\def \N {\mathbb N}
\def \P {\mathbb P}
\def \R {\mathbb R}
\def \Z {\mathbb Z}
\def\boP{\mathbf{P}}
\def\boV{\mathbf{V}}
\def\cB{\mathcal{B}}
\def\cC{\mathcal{C}}
\def\cD{\mathcal{D}}
\def\cE{\mathcal{E}}
\def\cF{\mathcal{F}}
\def\cH{\mathcal{H}}
\def\cK{\mathcal{K}}
\def\cL{\mathcal{L}}
\def\cO{\mathcal{O}}
\def\cS{\mathcal{S}}
\def\cV{\mathcal{V}}
\def\vareps{\varepsilon}
\newcommand{\prob}[1]{\ensuremath{\mathbf{P}\big(\,#1\,\big)}}
\newcommand{\expect}[1]{\ensuremath{\mathbf{E}\big(\,#1\,\big)}}
\newcommand{\condprob}[2]{\ensuremath{\mathbf{P}\big(\,#1\,\big|\,#2\,\big)}}
\newcommand{\condexpect}[2]{\ensuremath{\mathbf{E}\big(\,#1\,\big|\,#2\,\big)}}
\newcommand{\ind}[1]{\ensuremath{{1\!\!1}_{\{#1\}}}}
\def \toprob {\,\,\buildrel\boP\over\longrightarrow\,\,}
\newcommand{\vct}[1]{ \text{\boldmath{$\mathrm{#1}$}} }
\def \Ordo {\cO}
\def\ordo{o}
\def\lan{\langle}
\def\ran{\rangle}
\def\ps{\partial_s}
\def\px{\partial_x}
\def\pss{\partial^2_{ss}}
\def\one{1\!\!1}
\newcommand{\abs}[1]{\left|{#1}\right|}
\def \wt {\widetilde}
\def\ol{\overline}
\def\wh{\widehat}
\def\ot{\ol{t}}
\newtheorem {theorem}{Theorem}
\newtheorem {lemma}{Lemma}
\newtheorem {proposition}{Proposition}
\newtheorem {definition}{Definition}
\newtheorem* {theorem*}{Theorem}
\newtheorem* {thm*}{Theorem}
\newtheorem* {lemma*}{Lemma}
\newtheorem* {lem*}{Lemma}
\newtheorem* {corollary*}{Corollary}
\newtheorem* {cor*}{Corollary}
\newtheorem* {proposition*}{Proposition}
\newtheorem* {prop*}{Proposition}
\newtheorem* {definition*}{Definition}
\newtheorem* {def*}{Definition}
\newtheorem* {conjecture*}{Conjecture}
\newtheorem* {remark*}{Remark}
\newtheorem* {rem*}{Remark}
\def\be{\begin{equation}}
\def\ee{\end{equation}}
\def\bea{\begin{eqnarray}}
\def\eea{\end{eqnarray}}
\def \barn {\bar\N}
\def \weak {\Rightarrow}
\def \rbeps {\varepsilon}
\def \rbt {\tilde{t}}
\def \spect {t^*}
\def \rbch {\xi}
\def \rbu {u}
\def \rbtfin {\bar{t}}
\def \rbV {\mathbf{V}}
\def \rbE {\mathbf{E}}
\title{Erd\H os-R\'enyi random graphs + forest fires = self-organized criticality}
\author{
{\sc Bal\'azs R\'ath} \qquad and  \qquad {\sc  B\'alint T\'oth}
\\[5pt]
Institute of Mathematics \\ Budapest University of Technology (BME)
\\
{\sc
Egry J\'ozsef u. 1}
\\
{\sc H-1111 Budapest, Hungary}
\\
e-mail: {\tt \{rathb,balint\}{@}math.bme.hu}
}
\begin{document}

\date{\today}

\maketitle

\bigskip

\begin{abstract}
We modify the usual Erd\H os-R\'enyi random graph evolution by letting
connected clusters 'burn down' (i.e. fall apart to disconnected single
sites) due to a Poisson flow of lightnings. In a range of the
intensity of rate of lightnings the system sticks to a permanent
critical state.
\end{abstract}

$ $

\noindent
{\bf Keywords:} \emph{forest fire model, Erd\H{o}s-R\'enyi random graph, Smoluchowski coagulation equations, self-organized criticality}

$ $

\noindent
{\bf AMS subject classification:} Primary 60K35

$ $

\noindent
{\bf Acknowledgement.}
This research was  partially supported by the OTKA (Hungarian
National Research Fund) grants
K 60708 and
TS 49835. The authors thank the anonymous referee for thoroughly reading the manuscript. Her/his comments and suggestions helped us improving the presentation.

$ $

\noindent
Submitted to EJP on September 1, 2008, final version accepted May 14, 2009.
\newpage

\tableofcontents

\section{Introduction}
\label{section:introduction}

\subsection{Context}
\label{subsection:context}

In conventional models of equilibrium statistical physics, such as
Bernoulli percolation, random cluster models, the Ising model or
the Heisenberg model there is always a parameter which controls
the character of the equilibrium Gibbs measure: in percolation and
random cluster-type models this is the density of open
sites/edges, in the Ising or Heisenberg models the inverse
temperature. Typically the following happens: tuning the control
parameter at a particular value (the critical density or the
critical inverse temperature) the system exhibits critical
behavior in the thermodynamical limit, manifesting e.g. in power
law rather than exponential decay of the upper tail of the
distribution of the size of connected clusters. Off this
particular critical value of the control parameter these
distributions decay exponentially. We emphasize here that the
critical behavior is observed only at this particular critical
value of the control parameter.

As opposed to this, in some dynamically defined models of
interacting microscopic units one expects the following robust
manifestation of criticality: In some systems dynamics defined
naturally in terms of local interactions some effects can
propagate  instantaneously through macroscopic distances in the
system. This behavior may have dramatic effects on the global
behavior, driving the system to a permanent critical state. The
point is that without tuning finely some parameter of the
interaction the dynamics drives the system to criticality. This
kind of behavior is called \emph{self-organized criticality
  (SOC)} in the physics literature.
The two best known examples are the sandpile models where so
called avalanches spread over macroscopic distances
instantaneously, and the forest fire models where beside the
Poissonian flow of switching sites/edges from ``empty'' to
``occupied'' state (i.e. trees being grown), at some instants
connected clusters of occupied sites/edges (forests of trees) are
turned from ``occupied'' to ``empty'' state instantaneously  (i.e.
forests hit by lightnings are burnt down on a much faster time
scale than the growth of trees). These models and these phenomena
prove to be difficult to analyze mathematically rigorously due to
the following two facts: (1) There are always two competing
components of the dynamics (in the forest fire models: growing
trees and burning down forests) causing lack of any kind of
monotonicity of the models. (2) Long range effects due to
instantaneous propagation of short range interactions are very
difficult to be controlled.

Regarding forest fire models there are very few mathematically
rigorous results describing SOC. The best known and most studied
model of forest fires is the so-called Drossel-Schwabl model. For
the original formulation  see \cite{drossel_schwabl}, or the more
recent survey \cite{schenk_drossel_schwabl}. We formulate here a
related variant.

Let $\Lambda_n:=\Z^d\cap[-n,n]^d$.  The state space of the model
of size $n$ is $\Omega_n:=\{0,1\}^{\Lambda_n}$: sites of
$\Lambda_n$ can be occupied by a tree (1) or empty (0). The
dynamics consists of two competing mechanisms:
\\
(A) Empty (0) sites turn occupied (1) with rate one, independently
of whatever else happens in the system.
\\
(B) Sites get hit by ``lightnings'' with rate $\lambda(n)$,
independently of whatever else happens in the system. When site is
hit by lightning its whole connected cluster of occupied sites
turns instantaneously from ``occupied'' (1) to ``empty'' (0)
state. (That is: when a tree is hit by lightning the whole forest
to which it belongs burns down instantaneously.)
\\
The dynamics goes on indefinitely.

As long as $n$ is kept fixed the  mechanism $A+B$ defines  a
decent  finite state Markov process -- though a rather complicated
one. The main question is: what happens in the thermodynamic
limit, when $n\to\infty$, $\Lambda_n\nearrow\Z^d$? Can one specify
a dynamics on the state space $\Omega_{\infty}:=\{0,1\}^{\Z^d}$
which could be identified with the infinite volume limit of the
systems defined above?

In order to make some guesses, one has first to specify the
lightning rate $\lambda(n)$. Intuitively one expects four regimes
of the rate $\lambda(n)$ with essentially different asymptotic
behavior of the system in the limit of infinite volume:

\begin{enumerate} [I.]

\item
If $\lambda(n)\ll|\Lambda_n|^{-1}$ then the effect of lightning is
simply not felt in the thermodynamic limit: in macroscopic time
intervals of any fixed length no lightning will hit the entire
system. Thus, in this regime the system will simply be the
dynamical formulation of Bernoulli percolation.

\item
If $\lambda(n)=|\Lambda_n|^{-1} \lambda$ with some  fixed
$\lambda\in(0,\infty)$ then one expects in the thermodynamic limit
the following dynamics (described in plain, non-technical terms).
The system evolves as dynamical site percolation, with independent
Poisson evolutions on sites,  and with rate $\lambda \theta(t)$,
where $\theta(t)$ is the density of the (unique) infinite cluster,
the sites of this (unique) infinite cluster  are turned from occupied to empty.
 After this forest fire the system keeps on evolving like dynamical percolation
 until a new infinite component is born,  and the dynamics goes on indefinitely.

\item
If $|\Lambda_n|^{-1} \ll \lambda(n)\ll1$ then in the infinite
volume limit - if it makes any sense - something really
interesting must happen: The lightning rate is too small to hit
finite clusters within any finite horizon. But it is too large to
let the infinite percolating cluster to be born. One can expect
(somewhat naively) that in this  regime in the thermodynamic limit
a dynamics will be defined on $\Omega_{\infty}$ in which \emph{in
plain words} the following happens:
\\
- empty (0) sites turn occupied (1) with rate one, independently
of whatever else happens in  the system;
\\
- when the \emph{incipient infinite percolating cluster} is about
to be born, it is switched from ``occupied'' (1) to ``empty'' (0)
state;
\\
- the dynamics goes on indefinitely.
\\
In this way this presumed infinitely extended dynamics would stick
to a permanent critical state when the infinite incipient critical
cluster is always about to be born, but not let to grow beyond
criticality.

\item
If $\lambda(n)=\lambda\in(0,\infty)$ then lightning will hit
regularly even small clusters and thus, one may expect that - if
the infinitely extended dynamics is well defined - the system will
stay subcritical indefinitely.

\end{enumerate}

There is no problem with the mathematically rigorous definition of
the infinitely extended dynamics in regimes I. and II. But these
plain descriptions don't necessarily make mathematical sense
 and  it is not at all clear that such
infinitely extended critical forest fire models can at all be defined in a mathematically satisfactory
way.

In our understanding, the most interesting open questions are the
existence and characterization of the infinitely extended dynamics
in regime III. and/or the $\lambda\to\infty$ limit in regime II.
and/or the $\lambda\to0$ limit in regime IV., after the
thermodynamic limit.

There are however some deep results regarding these (or some other
related) models of forest fires, though clarification of the above
questions seems to be far out of reach at present.

Here follows a (necessarily incomplete) list of some important
results related to these questions:

\begin{enumerate}[--]

\item
M. D{\"u}rre proves existence of infinitely extended forest fire
dynamics in a related model  in the subcritical regime IV. ,
\cite{durre_1}. In a companion paper he also proves that under
some regularity conditions assumed the dynamics is uniquely
defined, \cite{durre_2}.

\item
J. van den Berg and R. Brouwer, respectively R. Brouwer consider
the so called \emph{self-destructive percolation} model, which is
very closely related to what we called regime II. above. They
prove various deep technical results and formulate some intriguing
conjectures related to the $\lambda\to\infty$ limit in regime II. (of the
already infinitely extended dynamics), see
\cite{vandenberg_brouwer_1}, \cite{vandenberg_brouwer_2},
\cite{brouwer_2}

\item
J. van den Berg and A. J\'arai analyze the $\lambda\to 0$
asymptotics of the (infinitely extended) model in regime IV. in
dimension 1, \cite{vandenberg_jarai}.

\item
J. van den Berg and B. T\'oth consider an \emph{inhomogeneous} one
dimensional model which indeed exhibits SOC, see
\cite{vandenberg_toth}. (In one dimensional space-homogeneous
models of course there is no critical behavior)
\end{enumerate}

\subsection{The model}
\label{subsection:model}

We investigate  a modification of the dynamical formulation of the
Erd\H os-R\'enyi random graph model, adding ``forest fires''
caused by  ``lightning'' to the conventional Erd\H os-R\'enyi
coagulation mechanism. Actually our model will be a particular
coagulation-fragmentation dynamics exhibiting robust
self-organized criticality.

Let $\cS_n:=\{1,2,\dots,n\}$ and $\cB_n:=\{(i,j)=(j,i):
i,j\in\cS_n, \, i \neq j \}$ be the set of vertices, respectively, unoriented
edges of the complete graph $\cK_n$. We define a dynamical random
graph model as follows. The state space of our Markov process is
$\{0,1\}^{\cB_n}$.

 Edges $(i,j)$ of $\cK_n$ will be
called occupied or empty according whether $\omega(i,j)=1$ or
$\omega(i,j)=0$.  As usual, we call clusters the maximal subsets
connected by occupied edges.

Assume that initially, at time $t=0$,  all edges are empty. The
dynamics consists of the following

\begin{enumerate} [(A)]

\item
Empty edges turn occupied with rate $1/n$, independently of
whatever else happens in the system.

\item
Sites of $\cK_n$ get hit by lightnings with rate $\lambda(n)$,
independently of whatever else happens in the system. When a site
is hit by lightning, all edges which belong to its connected
occupied cluster turn instantaneously empty.

\end{enumerate}

In this way a random graph dynamics is  defined. The coagulation
mechanism (A) alone defines the well understood Erd\H os-R\'enyi
random graph model.
For basic facts and refined details of the Erd\H os-R\'enyi random graph
problem see \cite{erdos_renyi}, \cite{bollobas},
\cite{janson_luczak_rucinski}.
As we shall see soon, adding the fragmentation
mechanism (B)  may cause essential changes in the behavior of the
system.

We are interested of course in the asymptotic behavior of the
system when  $n\to\infty$. In order to formulate our problem first
have to introduce  the proper spaces on which our processes are
defined.

We denote
\begin{align}\label{def_V_configurations}
\cV :=& \big\{ \vct{v} = \big(v_k\big)_{k\in\N}\,:\, \, v_k\ge0, \
\ \  \sum_{k\in\N} v_k \le 1\big\}, \quad \theta(\vct{v}) :=1-
\sum_{k\in\N} v_k,
\\
\label{def_V_1_configurations}
\cV_1 :=& \big\{ \vct{v} \in \cV \,:\,  \theta(\vct{v}) = 0\big\}.
\end{align}
We endow $\cV$  with the (weak) topology of component-wise
convergence. We may interpret $\theta$ as
the density of the giant component.

A map $[0,\infty)\ni t\mapsto \vct{v}(t)\in\cV$ which is
component-wise of bounded variation on compact intervals of time
and continuous from the left in $[0,\infty)$, will be called
\emph{a forest fire evolution (FFE)}. If
$\vct{v}(t)\in\cV_1$  for all $t\in[0,\infty)$ we call the FFE
\emph{conservative}. Denote the space of FFE-s and  conservative
FFE-s  by $\cE$, respectively, $\cE_1$. The space $\cE$ is endowed
with the topology of component-wise weak convergence of the signed
measures corresponding to the functions $v_k(\cdot)$ on compact
intervals of time. This topology is metrizable and the space $\cE$
endowed with this topology is complete and separable.

Now, we define the \emph{cluster size distribution} in our random
graph process as follows
\begin{align}
\label{def_of_v_k_in_markov}
v_{n,k}(t)
& :=
n^{-1} \#\{j\in\cS_n: j
\text{ belongs to a cluster of size } k
\text{ at time } t \}=:n^{-1}V_{n,k}(t),
\\[8pt]
\label{def_of_vct_v_in_markov}
\vct{v}_n(t)
& :=
\big(v_{n,k}(t)\big)_{k\in\N}.
\end{align}
This means that $\vct{v}_{n}(t)$ is the cluster size distribution
of a uniformly selected site from $\cS_n$, at time $t$. Clearly,
the random trajectory $t\mapsto\vct{v}_n(t)$ is a (conservative)
FFE. We consider the left-continuous version of $t\mapsto\vct{v}_n(t)$
instead of the traditional c.\`a.d.l.\`a.g., for technical reasons discussed in Subsection
\ref{subsection:fff}.

 We investigate the asymptotics of this process, as
$n\to\infty$.

It is well known (see e.g. \cite{buffet_pule_1},
\cite{buffet_pule_2}, \cite{aldous_1}) that in the Erd\H os-R\'enyi
case  --  that is: if $\lambda(n)=0$
\begin{equation}
\label{limprob}
\vct{v}_{n}(\cdot)\toprob \vct{v}(\cdot)
=\big(v_{k}(\cdot)\big)_{k\in\N} \quad \text{as $\quad n \to \infty$},
\end{equation}
where the deterministic functions $t\mapsto v_k(t)$ are solutions
of the infinite system of ODE-s
\begin{equation}
\label{smolu1}
\dot v_k(t) = \frac{k}{2} \sum_{l=1}^{k-1} v_{l}(t)
v_{k-l}(t) - k v_k(t), \qquad k\ge1,
\end{equation}
with initial conditions
\begin{equation}
\label{inimono}
v_k(0)=\delta_{k,1}.
\end{equation}
The infinite system of ODE-s \eqref{smolu1} are the
\emph{Smoluchowski coagulation equations}, the initial conditions \eqref{inimono} are
usually called \emph{monodisperse}. The system \eqref{smolu1} is
actually not very scary: it can be solved one-by-one for
$k=1,2,\dots$ in turn. For the initial conditions
\eqref{inimono}  the solution is known explicitly:
\[
v_k(t)=\frac{k^{k-1}}{k!}e^{-kt}t^{k-1}.
\]
$\left( v_k(t)\right)_{k=1}^{\infty} \in \cV$ is a (possibly defected) probability distribution
called the Borel distribution: in a Galton-Watson branching process with offspring distribution $POI(t)$ the resulting random tree has $k$ vertices with probability $v_k(t)$. Thus the branching process is subcritical, critical and supercritical for $t<1$, $t=1$ and $t>1$, respectively.

For general initial conditions $v_k(0)$ satisfying
\begin{equation*}
\label{inimoms}
\sum_{k=1}^\infty v_k(0)=1, \qquad
\sum_{k=1}^\infty k^2 v_k(0)<\infty,
\end{equation*}
the qualitative behavior of the solution of \eqref{smolu1} is
similar: Define the \emph{gelation time}
\begin{equation}\label{def_gel_time}
T_{\text{gel}}:=\big(\sum_{k=1}^\infty kv_k(0)\big)^{-1}
\end{equation}

\begin{enumerate} [--]

\item
For $0\le t< T_{\text{gel}}$ the system is subcritical:
$\theta(\vct{v}(t))=0$ and, $k\mapsto v_k(t)$ decay exponentially
with $k$.

\item
For $T_{\text{gel}}<t<\infty$ the system is supercritical:
$\theta(\vct{v}(t))>0$ and $k\mapsto v_k(t)$ decay exponentially
with $k$.  Further on: $t\mapsto\theta(\vct{v}(t))$ is smooth and
strictly increasing with  $\lim_{t\to\infty}
\theta(\vct{v}(t))=1$.

\item
Finally, at  $t= T_{\text{gel}}$ the system is critical:
$\theta(\vct{v}(T_{\text{gel}}))=0$ and
\begin{equation}
\label{erdos_renyi_critical_exponent}
 \sum_{l=k}^{\infty} v_l(T_{\text{gel}})\asymp
k^{-1/2} \quad \text{as $\quad k\to\infty$}.
\end{equation}

\end{enumerate}

Our aim is to understand in similar terms the asymptotic behavior
of the system when, beside the Erd\H os-R\'enyi coagulation
mechanism, the fragmentation due to forest fires also take place.

Similarly to the Drossel-Schwabl case presented in subsection
\ref{subsection:context} we have four regimes of the lightning
rate $\lambda(n)$, in which the asymptotic behavior is different:
\begin{align*}
& \text{I.:} && \lambda(n)\ll n^{-1},
\\[8pt]
& \text{II.:} && \lambda(n)=n^{-1} \lambda, \qquad
\lambda\in(0,\infty),
\\[8pt]
& \text{III.:} && n^{-1} \ll \lambda(n)\ll1,
\\[8pt]
& \text{IV.:} && \lambda(n)=\lambda\in(0,\infty).
\end{align*}

The $n\to\infty$ asymptotics of the processes $t\mapsto
\vct{v}_n(t)$ in the four regimes is summarized as follows:

\begin{enumerate} [I.]

\item
The effect of lightnings is simply not felt in the $n\to\infty$
limit. In this regime the system will  be the dynamical
formulation of the Erd\H os-R\'enyi random graph model, the
asymptotic  description presented in the previous paragraph is
valid.

\item
In the $n\to\infty$ limit the sequence of processes
$t\mapsto\vct{v}_{n}(t)$ converges weakly (in distribution) in the
topology of the space $\cE$ to a process $t\mapsto\vct{v}(t)$
described as follows: The process  $t\mapsto\vct{v}(t)$ evolves
deterministically, driven by the Smoluchovski equations
\eqref{smolu1} (exactly as in the limit of the dynamical Erd\H
os-R\'enyi model) with the following Markovian random jumps added
to the dynamics:
\begin{align}\label{giant_lightning}
&
\condprob{\vct{v}(t+dt)=J\vct{v}}{\vct{v}(t)=\vct{v}} = \lambda
\theta(\vct{v})dt+\ordo(dt)
\\[8pt] \label{giant_jump}
\text{where }\quad
&
J:\cV\to\cV, \quad
(J\vct{v})_k=v_k+\delta_{k,1}\theta(\vct{v}).
\end{align}
In plain words: with rate $\lambda \theta(\vct{v}(t))$ the amount
of mass $\theta(\vct{v}(t))$ contained in the gel (i.e. the unique
giant component) is instantaneously pushed into the singletons.

\item
This is the most interesting regime and \emph{technically the
content of the present paper}. In the $n\to\infty$ limit
\eqref{limprob} holds, where now the deterministic functions
$t\mapsto v_k(t)$ are solutions of the infinite system of
\emph{constrained ODE-s}
\begin{align}
\label{smolu2}
&
\dot v_k(t) = \frac{k}{2} \sum_{l=1}^{k-1}
v_{l}(t) v_{k-l}(t) - k v_k(t), \qquad k\ge2,
\\[8pt]
\label{nogel}
&
\sum_{k\in\N}v_k(t)=1,
\end{align}
with the initial conditions \eqref{inimono}. Mind the difference
between the system \eqref{smolu1} at one hand and the constrained
system \eqref{smolu2}+\eqref{nogel} at the other: the first
equation from \eqref{smolu1} is replaced by the global constraint
\eqref{nogel}. A first consequence is that it is no more true that
the ODE-s in \eqref{smolu2} can be solved for $k=1,2,\dots$,
one-by-one,  in turn. The system of ODE-s is \emph{genuinely
infinite}. Up to $T_{\text{gel}}$ the solutions of \eqref{smolu1},
respectively, of \eqref{smolu2}+\eqref{nogel} coincide, of course.
But dramatic differences arise beyond this critical time. We prove
that the system \eqref{smolu2}+\eqref{nogel} admits a \emph{unique
solution} and for $t\ge T_{\text{gel}}$
\begin{equation}
\label{largekasymp}
\sum_{l=k}^\infty v_l(t) \sim
\sqrt{\frac{2\varphi(t)}{\pi}} k^{-1/2}, \quad \text{ as } \quad
k\to\infty,
\end{equation}
where $[T_{\text{gel}},\infty)\ni t\mapsto \varphi(t)$ is strictly
positive, bounded and Lipschitz continuous. This shows that
in this regime the random graph dynamics exhibits indeed
\emph{self-organized critical behavior}: beyond the critical time
$T_{\text{gel}}$ it stays critical for ever. The unique stationary
solution of the system \eqref{smolu2}+\eqref{nogel} is easily found
\begin{equation}
\label{critstaci}
v_k(\infty)= 2 \binom{2n-2}{n-1} \frac{1}{n}
4^{-n} \approx \frac{1}{\sqrt{4\pi}}k^{-3/2}.
\end{equation}

\item
In the $n\to\infty$ limit \eqref{limprob} holds again, where now
the deterministic functions $t\mapsto v_k(t)$ are solutions of
the infinite system of ODE-s
\begin{equation}
\label{smolu3}
\dot v_k(t)=
\frac{k}{2}\sum_{l=1}^{k-1}v_l(t)v_{k-l}(t) - kv_k(t)  -\lambda  kv_k(t)
+\lambda \delta_{k,1} \sum_{l=1}^\infty l v_l(t), \qquad k\ge1,
\end{equation}
with the initial conditions in $\cV_1$. The system \eqref{smolu3}
is again a genuine infinite system (it can't be solved one-by-one
for $k=1,2,\dots$ in turn). The Cauchy problem \eqref{smolu3} with
initial condition in $\cV_1$ has a unique solution, which stays
\emph{subcritical}, i.e. for any $t\in(0,\infty)$
 $k\mapsto v_k(t)$ decays exponentially. The unique stationary solution is
closely related to that of \eqref{critstaci}:
\begin{equation*}
\label{subcritstaci}
v_{\lambda,k}(\infty)=
(\lambda+1)\left(1-\frac{\lambda^2}{(1+\lambda)^2} \right)^k
v_k(\infty)
\end{equation*}

\end{enumerate}

\subsection{The main results}
\label{subsection:results}

We present the results formulated and proved only for the regime
III: $n^{-1}\ll\lambda(n)\ll 1$, which shows \emph{self-organized
critical} asymptotic behaviour. The methods developed along the
proofs are sufficient to prove the asymptotic behaviour in the
other regimes, described in items I, II and IV but we omit these
(in our opinion less interesting) details.

\begin{theorem}
\label{theorem:uniqueness}
If the initial condition $\vct{v}(0)\in\cV_1$ is such that
$\sum_{k=1}^{\infty} k^3 v_k(0)<+\infty$, and $T_{\text{gel}}$ is defined by \eqref{def_gel_time}
 then the critical
forest fire equations \eqref{smolu2}+\eqref{nogel} have a unique
solution with the following properties:
\begin{enumerate}

\item
For $t \leq T_{\text{gel}}$ the solution coincides with that of
\eqref{smolu1}.

\item
For $t \geq T_{\text{gel}}$ there exists a positive, locally
Lipschitz-continuous
function $\varphi$ such that
\begin{equation}
\label{v1_evolution}
\dot{v}_1(t)=-v_1(t)+\varphi(t)
\end{equation}
and \eqref{largekasymp} holds.

\end{enumerate}
\end{theorem}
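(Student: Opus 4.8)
The plan is to construct the solution by a fixed-point/continuation argument that treats $\varphi$ as the unknown "control" enforcing the mass constraint \eqref{nogel}. Before $T_{\text{gel}}$ there is nothing to do: the solution of \eqref{smolu1} automatically satisfies $\sum_k v_k(t)=1$ (no gelation has occurred), so parts of the claim for $t\le T_{\text{gel}}$ are immediate, and we may restart the clock at $t_0\ge T_{\text{gel}}$ with a critical initial datum, i.e.\ one with $\theta=0$ and $\sum_l v_l \asymp k^{-1/2}$. The key reformulation: replace the first Smoluchowski equation by \eqref{v1_evolution} with $\varphi(t)$ to be determined, keep \eqref{smolu2} for $k\ge2$, and ask that the resulting flow preserve $\cV_1$. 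Differentiating $\sum_k v_k(t)=1$ formally along \eqref{smolu2}+\eqref{v1_evolution} gives an identity that forces $\varphi(t)$ to equal (the limit of) a boundary term coming from the coagulation sum — morally $\varphi(t)=\lim_{k\to\infty} k\,(\text{flux into clusters of size} \ge k)$, which is exactly the rate at which mass "escapes to infinity" through coagulation and must be reinjected into singletons. So the real content is: (i) this limiting flux exists, is finite, positive, and locally Lipschitz in $t$; (ii) with $\varphi$ chosen this way the constrained system has a solution; (iii) the solution is unique; (iv) the $k^{-1/2}$ tail \eqref{largekasymp} holds with the stated constant $\sqrt{2\varphi(t)/\pi}$.

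For existence and uniqueness I would work with generating functions. Let $V(t,x):=\sum_{k\ge1} v_k(t) x^k$ and $U(t,x):=\sum_{k\ge1} k v_k(t) x^k = x\,\partial_x V$. The Smoluchowski coagulation term $\frac{k}{2}\sum_{l} v_l v_{k-l}$ turns, after multiplying by $x^k$ and summing, into a first-order quasilinear PDE for $U$ of Burgers/McKean type: schematically $\partial_t U = (U-1)\,x\,\partial_x U + (\text{forcing from the }k=1\text{ term involving }\varphi)$, which can be solved by characteristics. The constraint \eqref{nogel} is $V(t,1)=1$, and criticality is $U(t,1)=\partial_x V(t,1)=1$ (the branching-process/Borel picture in the excerpt: the relevant offspring mean is pinned at $1$). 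Tracking the characteristic through the point $x=1$ gives a closed ODE that both determines $\varphi(t)$ and shows $U(t,\cdot)$ stays critical; local Lipschitz regularity of $\varphi$ follows from smooth dependence of characteristics on data, using the moment bound $\sum_k k^3 v_k(0)<\infty$ to control $\partial_x^2 U$ near $x=1$ and hence keep the characteristic field Lipschitz. Uniqueness is then uniqueness for this PDE/characteristic ODE with the pinning condition, which is standard once the field is Lipschitz. The third-moment hypothesis is what propagates: one shows $\sum_k k^3 v_k(t)$ (equivalently $\partial_x^3 V(t,1^-)$, read off from the $k^{-1/2}$ tail as the marginal divergent moment) stays finite locally in $t$, which is needed both for well-posedness and to justify all the formal summations-by-parts above.

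For the tail asymptotics \eqref{largekasymp}, the route is singularity analysis of $V(t,\cdot)$ near $x=1$. Criticality $U(t,1)=1$ means $x=1$ is a branch point; the generic square-root singularity $V(t,x)=1-c(t)\sqrt{1-x}+\dots$ (as in the pure Erdős–Rényi critical case, cf.\ \eqref{erdos_renyi_critical_exponent}) transfers, by a Tauberian/transfer theorem, to $\sum_{l\ge k} v_l(t)\sim \frac{c(t)}{2\sqrt\pi}\, k^{-1/2}$. Identifying $c(t)^2/(4\pi)$ with $2\varphi(t)/\pi$, i.e.\ $c(t)=2\sqrt{2\varphi(t)}$, comes from plugging the singular expansion into the generating-function PDE and matching the coefficient of $\sqrt{1-x}$: the $\varphi(t)\delta_{k,1}$ source term contributes at exactly this order, which is the mechanism tying the reinjection rate to the tail constant. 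One must check the square-root form is preserved under the flow (no worse singularity develops, no other singularity enters the unit disk), which again uses the propagated third-moment bound and the explicit characteristic solution.

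The main obstacle I anticipate is item (i)–(ii) made rigorous: showing that the limiting coagulation flux $\varphi(t)$ is well-defined and finite for every $t>T_{\text{gel}}$, and that the constrained system genuinely has a solution with this $\varphi$ — i.e.\ controlling the "mass leaking to infinity" uniformly. Because the unconstrained Smoluchowski flow would instantly create a giant component past $T_{\text{gel}}$, one is fighting that tendency at every instant, and there is no monotonicity to lean on. Concretely, the delicate point is a quantitative estimate that the tail $\sum_{l\ge k} v_l(t)$ neither decays too fast (which would make $\varphi=0$, contradicting criticality) nor too slowly (which would blow up the relevant moments), uniformly on compact time intervals; this self-consistency is what pins $\varphi$ strictly between $0$ and a finite bound and yields its Lipschitz continuity. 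I expect this to be handled by a careful a priori estimate on $\partial_x U(t,x)$ as $x\uparrow 1$, bootstrapped from the $k^3$-moment assumption, rather than by any soft argument.
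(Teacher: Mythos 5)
Your overall strategy (pass to the generating function, view the constrained system as a Burgers-type equation with the reinjection rate $\varphi$ as an unknown control, follow characteristics, and extract \eqref{largekasymp} from a square-root singularity via a Tauberian transfer) is the same in spirit as the paper's, but the two steps that carry all the difficulty are not actually supplied. First, uniqueness. You dismiss it as ``standard once the field is Lipschitz,'' but the point of the critical regime is that the characteristic field is \emph{not} Lipschitz where it matters: by \eqref{largekasymp} one has $v_k(t)\asymp k^{-3/2}$ for $t\ge T_{\text{gel}}$, so the space derivative of the generating function blows up like the inverse square root of the distance to the singular boundary ($-V'(t,x)\asymp x^{-1/2}$ as $x\downarrow 0$ in the paper's variables, Lemma \ref{lemma:summ}); moreover two putative solutions come with two \emph{different} controls $\varphi$ and $\psi$, so what must be shown is uniqueness for a coupled linear control problem, not for an ODE along a Lipschitz characteristic field. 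The paper needs a dedicated argument here: the difference $W=(V-U)/2$ is transported along the characteristics of the average $S=(V+U)/2$, the resulting boundary-value control problem is turned, after two integrations by parts, into an eigenvalue-type integral equation with a bounded kernel, and a Gr\"onwall argument forces $\delta\equiv 0$. Nothing in your sketch substitutes for this, and your ``pinning'' condition is misstated: criticality for $t\ge T_{\text{gel}}$ is \emph{not} $\sum_k k v_k(t)=1$; in fact the first moment is infinite there (this is exactly Lemma \ref{lemma:infinitederivative}, $V'(t,0)=-\infty$), so a characteristic-pinning condition of the form $\partial_xV(t,1)=1$ cannot be the mechanism that determines $\varphi$.

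Second, the part you yourself flag as the main obstacle --- existence of the constrained solution together with positivity, finiteness and local Lipschitz continuity of $\varphi$ --- is left as an expectation rather than proved, and your proposed bookkeeping is not viable as stated: $\sum_k k^3 v_k(t)$ cannot ``stay finite locally in $t$'' past $T_{\text{gel}}$, since the $k^{-3/2}$ decay makes even the first moment diverge. In the paper the hypothesis $m_3(0)<\infty$ is used only on the initial data, propagated to the gelation time through the explicit pre-gelation Burgers solution, to obtain $E'(T_{\text{gel}},x)=\Ordo(x^{-1/2})$ \eqref{iniEprimebound}; the positivity and regularity of $\varphi$ then come from the quantity $E=-V'^3/V''$, shown to be bounded above and below along characteristics (Lemma \ref{lemma:E}, using Cauchy--Schwarz and the positivity of $3V''+V'$), with $\varphi(t)=E(t,0)$ (Lemma \ref{lemma:cont}) and the Lipschitz bound from the $\Ordo(x^{-1/2})$ control of $E'$ (Lemma \ref{lemma:holder}). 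Existence itself is not obtained by an analytic fixed point at all, but as a by-product of the probabilistic construction: weak limit points of the $n$-vertex Markov chains yield solutions of \eqref{smolu2}+\eqref{nogel} (Propositions \ref{propo:weaklimit} and \ref{propo:conservative}). So while your outline identifies the right objects, the proof as proposed has genuine gaps precisely at the uniqueness argument and at the quantitative control of $\varphi$, and the moment-propagation and criticality claims it leans on are incorrect in the supercritical-time regime.
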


\begin{theorem}
\label{theorem:weak_conv_to_unique}
Let $\P_n$ denote the law of the random FFE of the forest fire
Markov chain $\vct{v}_n(t)$ with initial condition $\vct{v}_n(0)$
and lightning rate parameter $n^{-1}\ll\lambda(n)\ll1$. If
$\vct{v}_n(0) \to \vct{v}(0)\in\cV_1$ component-wise where
$\sum_{k=1}^{\infty} k^3 v_k(t)<+\infty$ then the sequence of
probability measures $\P_n$ converges weakly to the Dirac measure
concentrated on the unique solution of the critical forest fire
equations \eqref{smolu2}+\eqref{nogel} with initial condition
$\vct{v}(0)$. In particular
\[
\forall \varepsilon>0, \, t \geq 0 \quad \lim_{n \to \infty}
\prob{ \abs{v_{n,k}(t)-v_k(t)} \geq \varepsilon }=0
\]
\end{theorem}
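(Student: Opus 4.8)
The plan is to follow the classical three-step route to a hydrodynamic / propagation-of-chaos limit: (1) prove tightness of the laws $\P_n$ in the space $\cE$; (2) show that every subsequential limit is concentrated on trajectories solving the constrained Smoluchowski system \eqref{smolu2}+\eqref{nogel} with the prescribed initial datum; (3) invoke the uniqueness half of Theorem~\ref{theorem:uniqueness} to identify that solution, forcing the whole sequence $\P_n$ to converge to the Dirac mass on it. The algebraic engine for step (2) is the Dynkin martingale decomposition of the coordinate processes. Writing $\cL_n$ for the generator of the forest-fire Markov chain, for each fixed $k$,
\[
v_{n,k}(t)=v_{n,k}(0)+\int_0^t\big(\cL_n v_{n,k}\big)(s)\d s+M_{n,k}(t),
\]
with $M_{n,k}$ a martingale. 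Bookkeeping of mechanisms (A) and (B) gives, for $k\ge2$,
\[
\big(\cL_n v_{n,k}\big)(s)=\frac{k}{2}\sum_{l=1}^{k-1}v_{n,l}(s)\,v_{n,k-l}(s)-k\,v_{n,k}(s)-\lambda(n)\,k\,v_{n,k}(s)+R_{n,k}(s),
\]
and, for $k=1$,
\[
\big(\cL_n v_{n,1}\big)(s)=-v_{n,1}(s)+\varphi_n(s)+R_{n,1}(s),\qquad \varphi_n(s):=\lambda(n)\sum_{k\ge1}k\,v_{n,k}(s),
\]
where $R_{n,k}$ gathers the $\Ordo(n^{-1})$ finite-size corrections to the Erdős--Rényi coagulation terms (and, for $k=1$, a further term of order $\lambda(n)$), while $\varphi_n(s)=\lambda(n)\,n^{-1}\sum_{\text{clusters }C}|C|^2$ is the per-site rate at which mass is burnt by lightning and re-injected into the singletons (a cluster of size $m$ ignites at rate $\lambda(n)m$ and then deposits $m$ units of mass into the $1$-clusters).

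The hard part --- where the bulk of the work must go --- is a package of a priori estimates, uniform in $n$ and locally uniform in $t$. Conservation of mass, $\sum_k v_{n,k}(t)\equiv1$, is exact. Because the limiting state is \emph{permanently critical}, there can be no uniform bound on the susceptibility $\sum_k k^2 v_{n,k}(t)$; the estimates that must replace it are (i) \emph{no gel}: $\sup_{0\le t\le T}\theta(\vct v_n(t))\toprob0$, equivalently $\lim_{K\to\infty}\limsup_{n\to\infty}\expect{\sup_{0\le t\le T}\sum_{k>K}v_{n,k}(t)}=0$ --- i.e.\ the mass carried by the largest clusters vanishes and the largest cluster itself is $o(n)$; (ii) tightness and local boundedness of $\varphi_n(\cdot)$, so that $\int_0^{\cdot}\varphi_n$ is uniformly Lipschitz; and (iii) enough control of the cluster-size moments (here the initial third-moment hypothesis $\sum_k k^3 v_{n,k}(0)<\infty$ enters) to make the quadratic variations $\langle M_{n,k}\rangle(t)$ vanish as $n\to\infty$. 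All three hinge on the hypothesis $n^{-1}\ll\lambda(n)$: a cluster of macroscopic size $\delta n$ is struck by lightning at the \emph{divergent} rate $\lambda(n)\delta n$ and is therefore destroyed essentially instantaneously, so the coagulation dynamics is never allowed to push a positive fraction of the total mass into a single component. Turning this heuristic into rigorous uniform bounds requires controlling the near-critical Erdős--Rényi cluster-size distribution --- via the exponent in \eqref{erdos_renyi_critical_exponent} and its dynamical refinements --- on the short time windows between consecutive fires and propagating the moment/tail control forward in time; this is the technical heart of the matter and the step I expect to be the main obstacle.

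Granted the a priori estimates, the rest is routine. By (iii), $M_{n,k}\toprob0$ locally uniformly in $t$ for each fixed $k$; by (i)--(iii) the drift $\cL_n v_{n,k}$ is uniformly bounded, so $v_{n,k}(\cdot)-M_{n,k}(\cdot)$ is uniformly Lipschitz, and together with the $[0,1]$-bound and left-continuity this yields tightness of $\P_n$ on $\cE$ (indeed tightness of each $v_{n,k}(\cdot)$ in $C([0,T])$). Along a convergent subsequence $\vct v_n(\cdot)\weak\vct v(\cdot)$; passing to the limit in the Dynkin identity (the martingale and the $R_{n,k}$ terms drop out, and $\lambda(n)\,k\,v_{n,k}(s)\to0$ since $\lambda(n)\to0$ while $k\,v_{n,k}\le1$) gives, for $k\ge2$, the Smoluchowski equation \eqref{smolu2}. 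For $k=1$ one writes $\int_0^t\varphi_n(s)\d s=v_{n,1}(t)-v_{n,1}(0)+\int_0^t v_{n,1}(s)\d s-\int_0^t R_{n,1}(s)\d s$, which converges; by (ii) the functions $\int_0^{\cdot}\varphi_n$ are uniformly Lipschitz, so along a further subsequence they converge to a Lipschitz $\Phi$, whence $\varphi:=\dot\Phi\ge0$ exists a.e.\ and $\dot v_1(t)=-v_1(t)+\varphi(t)$, i.e.\ \eqref{v1_evolution}; estimate (i) gives $\sum_k v_k(t)=1$, i.e.\ \eqref{nogel}. Hence every subsequential limit is concentrated on a solution of \eqref{smolu2}+\eqref{nogel} with initial datum $\vct v(0)$, which by Theorem~\ref{theorem:uniqueness} is unique; therefore $\P_n$ converges weakly to the Dirac mass on it. Finally, the limiting trajectory $t\mapsto v_k(t)$ is continuous (it equals the explicit Borel-type solution of \eqref{smolu1} for $t\le T_{\text{gel}}$ and solves a system with continuous, locally Lipschitz right-hand side for $t\ge T_{\text{gel}}$), so the convergence is uniform on compacts in probability, which in particular gives $\prob{\abs{v_{n,k}(t)-v_k(t)}\ge\varepsilon}\to0$ for every $\varepsilon>0$, $t\ge0$, $k\in\N$.
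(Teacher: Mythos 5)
Your overall architecture (tightness, martingale characterization of subsequential limits, then identification via the uniqueness in Theorem~\ref{theorem:uniqueness}) is the same as the paper's, which implements it through the ``forest fire flow'' variables $q_{n,k,l},r_{n,k}$ and the reconstruction formula \eqref{floweqs} rather than a Dynkin decomposition of the coordinates; that difference is cosmetic. The genuine gap is exactly the step you yourself flag as ``the main obstacle'': the no-gel estimate (i), i.e.\ that every limit point is concentrated on conservative evolutions, $\sum_k v_k(t)\equiv 1$. This is not a technical refinement to be supplied later --- it is the core of the paper's proof of this theorem (Proposition~\ref{propo:conservative} and all of Subsection~\ref{subsection:nogiant}), and your heuristic for it is too weak: the danger is not only a macroscopic cluster surviving (which is indeed hit at rate $\lambda(n)\delta n\to\infty$), but mass parking in mesoscopic clusters of size $o(n)$ but $\omega(1)$, e.g.\ of size $O(1/\lambda(n))$, which burn at rate $O(1)$ or slower, so ``instant destruction'' does not apply. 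The paper closes this by a contradiction argument of a quite different flavour from the one you sketch: if a limit point had $\theta\not\equiv 0$, then (using the Lipschitz bound \eqref{giant_lipchitz} on $\theta$, itself proved via the integrated Burgers control problem) $\theta$ would be bounded below on a whole time interval; a stochastic-characteristics argument for the finite-$n$ generating function (Lemma~\ref{lemma:almost_giant_or_lot_burnt}) shows mass in size-$\ge K$ clusters quickly either burns or coagulates up to size $n^{1/3}$, and a component-growth/burning-time argument (Lemma~\ref{lemma:if_almost_giant_then_big_fire_soon}) shows such clusters burn within time $o(1)$ using $n\lambda(n)\to\infty$; iterating over $\lfloor \rbeps_1/(2\rbt)\rfloor$ disjoint subintervals forces $\expect{r_n(T)}>T+2$, contradicting the flow identity bound \eqref{bounds_on_flow}, \eqref{q_n_upper_bound}. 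Your proposed route via ``dynamical refinements of \eqref{erdos_renyi_critical_exponent} on windows between fires'' is not carried out and is not obviously workable, since between fires the finite-$n$ system is not an equilibrium near-critical Erd\H{o}s--R\'enyi graph.

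A secondary problem: your estimate (ii), pointwise local boundedness of $\varphi_n(s)=\lambda(n)\sum_k k\,v_{n,k}(s)$ uniformly in $n$, is more than you can expect to prove and more than is needed. The quantity $\sum_k k\,v_{n,k}$ is the susceptibility, which at criticality is of order $n^{1/3}$, so $\varphi_n$ can be of order $\lambda(n)n^{1/3}\to\infty$ whenever $\lambda(n)\gg n^{-1/3}$; hence ``$\int_0^\cdot\varphi_n$ uniformly Lipschitz'' and ``the drift $\cL_n v_{n,k}$ is uniformly bounded'' (for $k=1$) are unjustified, and with them your tightness claim in $C([0,T])$. The paper avoids this by working with the increasing functions $r_n(\cdot)$, using only $\expect{r_n(T)}\le 2+T$ together with Helly-type compactness in the weak topology of measures (Lemma~\ref{lemma:compactness}, Definition~\ref{def:flow_topology}); the Lipschitz continuity of the limiting $r$, i.e.\ the existence of a bounded $\varphi$, is derived only for the limit, after conservativity, from the generating-function bounds $E(t,x)\asymp 1$ and \eqref{pint}. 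So both the central estimate and the compactness framework you rely on need to be replaced by arguments of the type the paper actually provides.
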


\section{Coagulation and fragmentation}
\label{section:coagfrag}

\subsection{Forest fire flows}
\label{subsection:fff}

In this section we investigate the underlying structure of forest fire evolutions arising from the coagulation-fragmentation dynamics of our model on $n$ vertices.

We define auxiliary objects called forest fire flows:
 let $q_{n,k,l}(t)$ denote $n^{-1}$ times the number of $(k,l)$-coagulation events (a component of size $k$ merges with a component of size $l$) up to time $t$. Let $r_{n,k}(t)$ denote $n^{-1}\cdot k$ times the number of $k$-burning events (a component of size $k$ burns) up to time $t$. For the precise definitions see \eqref{def_of_Qnkl}, \eqref{def_of_Rnk}, \eqref{markov_flow_coag} and \eqref{markov_flow_fire}.

 In Subsection \ref{subsection:fff} and Subsection \ref{subsection:Markov_process} we precisely formulate and prove lemmas based on the following heuristic ideas:

\begin{itemize}

\item
The state $\vct{v}_n(t)$ of the forest fire process on $n$ vertices (see \eqref{def_of_vct_v_in_markov}) can be
recovered if we know the initial state $\vct{v}_n(0)$, and the flow: $q_{n,k,l}(t)$ for all $k,l \in \N$ and $r_{n,k}(t)$ for all $k$. The precise formula is \eqref{floweqs}.

\item \eqref{floweqs} is similar to the equations \eqref{smolu3}. This will help us proving Theorem \ref{theorem:weak_conv_to_unique}: if $1 \ll n$ and
$n^{-1} \ll \lambda(n) \ll 1$
 then the random forest fire evolution $\vct{v}_n(t)$ "almost" satisfies the equations \eqref{smolu2}+\eqref{nogel}  that uniquely determine the deterministic limiting object $\vct{v}(t)$. We essentially prove that \eqref{smolu2} is satisfied in the $n \to \infty$ limit in Proposition \ref{propo:weaklimit} of Subsection \ref{subsection:Markov_process}. We prove that \eqref{nogel} is satisfied in the limit in Subsection \ref{subsection:nogiant}.

\end{itemize}

$ $

We define the moments of $\vct{v} \in \cV$ as
\[ m_0=\sum_{k=1}^{\infty} v_k, \quad
 m_1=\sum_{k=1}^{\infty} k\cdot v_k,  \quad
  m_2=\sum_{k=1}^{\infty} k^2 \cdot v_k, \quad
 m_3=\sum_{k=1}^{\infty} k^3 \cdot v_k \]
By \eqref{def_V_configurations} and \eqref{def_V_1_configurations} $m_0=1$ if and only if $\vct{v} \in \cV_1$.

Fix $T\in(0,\infty)$. A map $[0,T]\ni t\mapsto \vct{v}(t)\in\cV$ is a
\emph{a forest fire evolution (FFE)} on
$\lbrack 0, T \rbrack$ if  $v_k(\cdot),$ $k \in \N$ is of
bounded variation and continuous from the left in $(0,T]$.
Denote the
space of FFE-s on $\lbrack 0, T \rbrack$  by $\cE[0,T]$ and the
space of FFE-s with initial condition $\vct{v}(0)=\vct{v}\in\cV$
on $\lbrack 0, T \rbrack$ by $\cE_{\vct{v}}[0,T]$.
Note that a priori $\theta(\cdot)=1-\sum_{k \in \N} v_k(\cdot)$
need not be of bounded variation.

If $\vct{v}_n(\cdot)\in \cE[0,T]$ is a sequence of FFE-s then we
say that $\vct{v}_n(\cdot) \to \vct{v}(\cdot)$ if
$v_{n,k}(\cdot) \weak v_k(\cdot)$ for all $k \in \N$ where ``$\weak$'' denotes  weak
convergence of the finite signed measures on $\lbrack 0 , T \rbrack$
corresponding to the functions $v_{n,k}(\cdot)$ and $v_k(\cdot)$.
Note that we did not require the convergence of $\theta_n(\cdot)$ to
$\theta(\cdot)$.

   This topology is metrizable and the spaces $\cE[0,T]$  and
$\cE_{\vct{v}}[0,T]$ endowed with this topology are  separable and
complete (by Fatou's lemma, $\lim_{n \to \infty} \vct{v}_n(t)$ stays in $\cV$).

Denote $\N:=\lbrace 1,2,\dots\rbrace$ and $\barn:=\N \cup
\lbrace \infty \rbrace$.

A \emph{forest fire flow (FFF)} is a collection of maps
$[0,T]\ni t\mapsto\big(\vct{q}(t), \vct{r}(t) \big)$
where for $0\le s\le t \le T$
\begin{align*}
&
0=q_{k,l}(0)\le q_{k,l}(s)\le q_{k,l}(t),
&&
\vct{q}(t)=\big(q_{k,l}(t)\big)_{k,l\in\barn},
&&
q_{k,l}(t)=q_{l,k}(t),
\\[8pt]
&
0=r_{k}(0)\le r_{k}(s)\le r_{k}(t),
&&
\vct{r}(t)=(r_{k}(t))_{k\in\barn},
&&
r_1(t)\equiv0
\end{align*}
We define
\begin{equation}\label{def_of_q_r}
q_k(t):=\sum_{l\in\barn}q_{k,l}(t),
\qquad
q(t):=\sum_{k\in\barn}q_{k}(t),
\qquad
r(t):=\sum_{k\in\barn}r_{k}(t)
\end{equation}
and assume the \emph{finiteness conditions}
$q(T)<+\infty$,
$r(T)<+\infty$. All functions involved are continuous from the left in $(0,T]$. This is why we have chosen
 to consider the left-continuous versions of these functions rather than the traditional
 c.\`a.d.l.\`a.g.: the supremum of increasing left-continuous functions is itself left-continuous, thus the left-continuity of $q_k$, $q$ and $r$ automatically follows from the left-continuity of $q_{k,l}$ and $r_k$.

 We say that the FFF $[0,T]\ni t\mapsto\big(\vct{q}(t), \vct{r}(t) \big)$
is \emph{consistent with the initial condition}
$\vct{v}(0)=\vct{v}\in\cV$
if $t\mapsto\vct{v}(t)$ defined by
\begin{equation}
\label{floweqs}
v_k(t)
=
v_k(0)
+
\frac{k}{2} \sum_{l=1}^{k-1}q_{l,k-l}(t)
-
k q_k(t)
-
 r_k(t)
+
\ind{k=1}r(t),
\hskip1cm
k\in \N.
\end{equation}
is in $\cE_{\vct{v}}[0,T]$. That is: for all $t\in[0,T]$ and $k
\in \N$ $v_k(t)\geq 0$ and $\sum_{k \in \N} v_k(t) \leq 1$ holds.
In this case we say that the FFF $\big(\vct{q}(\cdot),
\vct{r}(\cdot) \big)$ \emph{generates} the FFE $\vct{v}(\cdot)$.

 We denote by $\cF_{\vct{v}}[0,T]$ the space of FFF-s consistent
with the initial condition $\vct{v}(0)=\vct{v}\in\cV$. For any
$\vct{v}\in\cV$, $\cF_{\vct{v}}[0,T]\not=\emptyset$, since the
zero flow is consistent with any initial condition.

 At this point we mention that later we are going to obtain a FFF $\big(\vct{q}_n(\cdot), \vct{r}_n(\cdot) \big)$
 from a realization of our model on $n$ vertices by \eqref{def_of_Qnkl}, \eqref{def_of_Rnk}, \eqref{markov_flow_coag} and \eqref{markov_flow_fire}. There is a FFF corresponding to the limit object as well: for the solution of the critical
forest fire equations \eqref{smolu2}+\eqref{nogel} (the uniqueness of the solution is stated in Theorem \ref{theorem:uniqueness}) we define
$\big(\vct{q}(\cdot), \vct{r}(\cdot) \big)$ by
\begin{equation}\label{flow_from_critical_solution}
 \dot{q}_{k,l}(t)=v_k(t)v_l(t), \quad q_{\infty,k}(t)\equiv q_{\infty,\infty}(t) \equiv 0, \quad r_{k}(t)\equiv 0,
 \quad \dot{r}_{\infty}(t)=\varphi(t)
 \end{equation}
with the $\varphi(t)$ of \eqref{v1_evolution}. In Definition \ref{def:flow_topology} we define a topology on the space of FFFs. In later sections we are going to prove that
\[ \big(\vct{q}_n(\cdot), \vct{r}_n(\cdot) \big) \toprob \big(\vct{q}(\cdot), \vct{r}(\cdot) \big)\]
 from which Theorem \ref{theorem:weak_conv_to_unique} will follow.

Summing \eqref{floweqs} for $k \in \N$ we obtain a formula for the evolution of
$\theta(\cdot)$ defined in \eqref{def_V_configurations}: for $s \leq t$
\begin{multline}
\label{ev_of_v_infty_in_flow}
\theta(t)=\theta(s)+\lim_{K \to \infty} \sum_{k=1}^K
\sum_{l=K-k+1}^{\infty} k \cdot \left(
q_{k,l}(t)-q_{k,l}(s)\right) +
\\
\sum_{k=1}^{\infty} k \cdot
\left(q_{k,\infty}(t)-q_{k,\infty}(s)\right) - \left( r_{\infty}(t)-r_{\infty}(s)
\right)
\end{multline}

Later we will see that the term $\lim_{K \to \infty} \sum_{k=1}^K
\sum_{l=K-k+1}^{\infty} k \cdot \left(
q_{k,l}(t)-q_{k,l}(s)\right)$ does not vanish for the FFF defined by \eqref{flow_from_critical_solution} for the unique solution $\vct{v}(t) $ of \eqref{smolu2}+\eqref{nogel} if $T_{\text{gel}}\leq s <t$: this phenomenon is a sign of self-organized criticality.

If $\big(\vct{q}(\cdot), \vct{r}(\cdot) \big)$ is a FFF then the functions $q_{k,l}$, $q_k$, $q$, $r_k$ and $r$ (where
$k,l \in \barn$) are continuous from the left and increasing with initial condition $0$: such functions are the distribution functions of nonnegative measures on $[0,T]$. By $q(T)<+\infty$ and $r(T)<+\infty$ these measures are finite. We denote by "$\weak$" the weak convergence of measures on $[0,T]$, which can alternatively be defined by point-wise convergence
of the distribution functions at the continuity points of the
limiting function.

\begin{definition}
\label{def:flow_topology}
Let $\big(\vct{q}_n(\cdot), \vct{r}_n(\cdot)\big)=
\big( \left(q_{n,k,l}(\cdot)\right)_{k,l \in \barn}, \left(r_{n,k}(\cdot)\right)_{k \in \barn}\big)$, $n=1,2,\dots$ be a sequence of FFFs. Define $q_{n,k}(\cdot)$, $q_n(\cdot)$ and $r_n(\cdot)$ for all $n$ by \eqref{def_of_q_r}.

We say that
$\big(\vct{q}_n(\cdot), \vct{r}_n(\cdot) \big) \to
\big(\vct{q}(\cdot), \vct{r}(\cdot) \big)$ as $n \to \infty$ if
\begin{align*}
\forall \; k,l \in \N \quad q_{n,k,l}(\cdot) &\weak
q_{k,l}(\cdot) \\
\forall\; k \in \N \quad q_{n,k}(\cdot) &\weak
q_k(\cdot) \\  q_n(\cdot) &\weak q(\cdot) \\
\forall\; k \in \N \quad r_{n,k}(\cdot) &\weak r_k(\cdot)\\
 r_n(\cdot) &\weak
r(\cdot)
\end{align*}
\end{definition}
Note that we do not require
 $r_{n,\infty}(\cdot) \weak r_{\infty}(\cdot)$ and $q_{n,k,\infty}(\cdot) \weak q_{k,\infty}$ for $k \in \barn$. Nevertheless these "missing" ingredients of the limit flow $\big(\vct{q}(\cdot), \vct{r}(\cdot) \big)$ of convergent flows are uniquely determined by the convergent ones if we rearrange the relations \eqref{def_of_q_r}:
\begin{align}
 \label{q_k_infty_def_lim} q_{k,\infty}(t) &:= q_k(t)-\sum_{l \in
\N} q_{k,l}(t),
\\[8pt]
 \label{r_infty_def_lim} r_{\infty}(t) &:= r(t)-\sum_{k \in \N}
r_k(t),
\\[8pt]
\label{q_infty_infty_def_lim} q_{\infty,\infty}(t) &:= q(t)-2
\sum_{k \in \N} q_k(t) +\sum_{k,l \in \N} q_{k,l}(t).
\end{align}
In fact, $r_{n,\infty}(\cdot) \not \weak r_{\infty}(\cdot)$ and $q_{n,k,\infty}(\cdot) \not \weak q_{k,\infty}$ have a physical meaning in the forest fire model if $\big(\vct{q}_n(\cdot), \vct{r}_n(\cdot)\big)$ is defined by \eqref{markov_flow_coag} and \eqref{markov_flow_fire}:
\begin{itemize}
\item In the $\lambda(n)=\Ordo(n^{-1})$ regime
   $0 \equiv q_{n,k,\infty}(\cdot) \not \weak  q_{k,\infty}(\cdot) \not \equiv 0$ indicates the presence of a giant component. The precise formulation of this fact for the Erd\H os-R\'enyi model is \eqref{ER_giant_flow_integral}.
\item If
 $\lambda(n) \ll 1$ then  only "large" components burn. Indeed in Proposition \ref{propo:weaklimit}  we are going to prove  that for all $k \in \N$ $r_{n,k}(\cdot)$ converges to $0$ in probability an $n \to \infty$.
 Thus by \eqref{r_infty_def_lim}  we have $r(\cdot)=r_{\infty}(\cdot)$ in the limit. But Theorem \ref{theorem:uniqueness}, Theorem \ref{theorem:weak_conv_to_unique} and \eqref{flow_from_critical_solution} imply that
   $ 0 = r_{n,\infty}(t) \not \weak r_{\infty}(t) =\int_{0}^t \varphi(s)\, ds>0 $ for $t > T_{\text{gel}}$.
\end{itemize}

$\cF_{\vct{v}}[0,T]$ endowed with the topology of Definition \ref{def:flow_topology} is a complete separable metric space:

\begin{lemma}
\label{lemma:polish}
If $\big(\vct{q}_n(\cdot), \vct{r}_n(\cdot) \big) \in \cF_{\vct{v}}[0,T]$
for all $n \in \N$ and
$\big(\vct{q}_n(\cdot), \vct{r}_n(\cdot) \big) \to \big(\vct{q}(\cdot),
\vct{r}(\cdot) \big)$,
then $\big(\vct{q}(\cdot), \vct{r}(\cdot) \big) \in \cF_{\vct{v}}[0,T]$.
\end{lemma}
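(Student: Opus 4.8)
The plan is to verify the two defining requirements of membership in $\cF_{\vct{v}}[0,T]$: first, that the limit $\big(\vct{q}(\cdot),\vct{r}(\cdot)\big)$ is a FFF — every coordinate (including the $\infty$-entries, which are \emph{defined} by \eqref{q_k_infty_def_lim}--\eqref{q_infty_infty_def_lim}) is left-continuous, nondecreasing, vanishes at $0$, the symmetry $q_{k,l}=q_{l,k}$ and $r_1\equiv0$ hold, the bookkeeping relations \eqref{def_of_q_r} hold, and $q(T),r(T)<+\infty$; and second, that the limit is consistent with $\vct{v}$, i.e.\ the FFE $\vct{v}(\cdot)$ generated from it by \eqref{floweqs} lies in $\cE_{\vct{v}}[0,T]$. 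The only machinery needed is that a weak limit of distribution functions of finite measures on the compact interval $[0,T]$ is again the distribution function of a finite measure, that left-continuity, monotonicity, symmetry, the value $0$ at $t=0$ and finiteness of total mass survive weak limits — checked first at continuity points of the limit and extended everywhere by left-continuity — and Fatou's lemma for the infinite sums over the cluster-size index.

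For the coordinates with indices in $\N$, namely $q_{k,l},q_k,q,r_k,r$, the first requirement is immediate. The substance is in the $\infty$-entries, and for these I would run a truncation argument: fix $K$ and lump $\{K{+}1,K{+}2,\dots\}\cup\{\infty\}$ in the index set $\barn$ into a single super-state $\star$. Then the ``$\star$-coordinates'' $q^{(K)}_{n,k,\star}(t)=q_{n,k}(t)-\sum_{l=1}^{K}q_{n,k,l}(t)$, $r^{(K)}_{n,\star}(t)=r_n(t)-\sum_{k=1}^{K}r_{n,k}(t)$ and $q^{(K)}_{n,\star,\star}(t)=q_n(t)+\sum_{k,l=1}^{K}q_{n,k,l}(t)-2\sum_{k=1}^{K}q_{n,k}(t)$ are, for each $n$, nonnegative, nondecreasing, left-continuous and vanish at $0$ (this follows from the FFF axioms for $\big(\vct{q}_n,\vct{r}_n\big)$ since they count the mergers/burnings involving the super-state), and — crucially — they are \emph{finite} linear combinations of the weakly convergent coordinate functions. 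Hence they converge weakly, and their limits inherit all these regularity properties. Sending $K\to\infty$ (the truncated sums $\sum_{k\le K}q_k(t)$ and $\sum_{k,l\le K}q_{k,l}(t)$ are monotone and bounded by $q(t)<+\infty$, hence convergent) identifies these limits, via \eqref{q_k_infty_def_lim}--\eqref{q_infty_infty_def_lim}, with $q_{k,\infty},r_\infty,q_{\infty,\infty}$ and shows each is a bona fide distribution function of a finite measure; the relations \eqref{def_of_q_r} then hold for the limit by construction, and $q(T),r(T)<+\infty$ because the limiting measures are finite.

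For consistency, I would note that \eqref{floweqs} expresses $v_{n,k}(\cdot)$, for fixed $k$, as $v_k(0)$ plus a finite signed combination of $q_{n,l,k-l}$ $(1\le l\le k-1)$, $q_{n,k}$, $r_{n,k}$ and $r_n$, all weakly convergent; hence $v_{n,k}(\cdot)$ converges weakly to the function $v_k(\cdot)$ that \eqref{floweqs} produces from the limit flow. Since $\big(\vct{q}_n,\vct{r}_n\big)\in\cF_{\vct{v}}[0,T]$ we have $v_{n,k}(t)\ge0$ and $\sum_k v_{n,k}(t)\le1$ for all $n,t$; passing to the limit along the co-countable set of common continuity points yields $v_k(t)\ge0$ there and, after truncating to finitely many $k$ and invoking Fatou, $\sum_k v_k(t)\le1$ there, and both inequalities extend to all $t\in[0,T]$ by the left-continuity of $v_k$ and of $\sum_k v_k$ (a limit of nondecreasing, left-continuous partial sums). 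Each $v_k$ is of bounded variation as a finite combination of monotone functions, is left-continuous, and satisfies $v_k(0)=v_k$; thus $\vct{v}(\cdot)\in\cE_{\vct{v}}[0,T]$, which would complete the proof.

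The one genuinely delicate point — and the reason the statement is not trivial — is the interchange of $n\to\infty$ with the infinite sums over cluster sizes: mass may escape to arbitrarily large indices, so $q_{k,\infty},r_\infty,q_{\infty,\infty}$ are in general \emph{not} the pointwise weak limits of $q_{n,k,\infty},r_{n,\infty},q_{n,\infty,\infty}$ but only the quantities forced by \eqref{q_k_infty_def_lim}--\eqref{q_infty_infty_def_lim} (this is precisely the phenomenon $r_{n,\infty}\not\weak r_\infty$ pointed out after Definition \ref{def:flow_topology}). The truncate-then-send-$K\to\infty$ scheme together with Fatou is what shows that these forced quantities are nevertheless admissible components of a FFF; everything else is the routine observation that weak limits preserve the defining regularity of distribution functions, done at continuity points and extended by left-continuity.
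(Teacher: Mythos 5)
Your proof is correct and has the same skeleton as the paper's: first show the limit is a genuine FFF, where the only nontrivial point is that the entries $r_{\infty}$, $q_{k,\infty}$, $q_{\infty,\infty}$ forced by \eqref{q_k_infty_def_lim}--\eqref{q_infty_infty_def_lim} are nondecreasing, and then obtain consistency with $\vct{v}$ from the fact that \eqref{floweqs} is, for each fixed $k$, a finite combination of weakly convergent coordinates, together with closedness of $\cE_{\vct{v}}[0,T]$ -- this second step is exactly the paper's. Where you genuinely differ is the implementation of the monotonicity step: the paper applies Fatou's lemma directly to the series defining $r_{\infty}$ and $q_{k,\infty}$, and for $q_{\infty,\infty}$ it must first rearrange the doubly indexed sum over the L-shaped shells $K_m$ before Fatou applies; your truncation to a super-state, resting on the inclusion--exclusion identity $q_n(t)-2\sum_{k\le K}q_{n,k}(t)+\sum_{k,l\le K}q_{n,k,l}(t)=\sum q_{n,k,l}(t)$ with the last sum over pairs whose indices both exceed $K$ (including $\infty$), treats all three $\infty$-entries uniformly, replaces Fatou by monotone convergence of the truncated sums (which are bounded by $q(t)<\infty$), and in particular avoids the $K_m$ bookkeeping; in content the two arguments are equivalent, your scheme being a hands-on proof of the Fatou-type inequality the paper invokes, but yours is arguably the more transparent organization. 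One small repair in the consistency part: your parenthetical claim that $\sum_k v_k$ is left-continuous because it is ``a limit of nondecreasing, left-continuous partial sums'' does not stand as written -- the partial sums are nondecreasing in $K$, not in $t$, and an increasing limit of left-continuous functions need not be left-continuous (recall the paper's warning that $\theta(\cdot)$ need not even be of bounded variation). You do not need that claim: for arbitrary $t\in(0,T]$ take continuity points $t_j\uparrow t$, use left-continuity of each $v_k$ and Fatou in $k$ to get $\sum_k v_k(t)\le\liminf_j\sum_k v_k(t_j)\le 1$, the same device you already employ at continuity points.
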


\begin{proof}
By the definition of weak convergence, $q_{k,l},q_{k},q,r_k,r$ are increasing
left-continuous functions with initial value $0$. We need to check that the functions
$r_{\infty}$, $q_{k,\infty}$, and
$q_{\infty,\infty}$ (defined by \eqref{r_infty_def_lim}, \eqref{q_k_infty_def_lim} and \eqref{q_infty_infty_def_lim}, respectively) are increasing. We may assume that $0 \leq s \leq t \leq T$
 are continuity points of $q_{k,l}$, $q_k$, $q$, $r_k$ and $r$ for all $k,l \in \barn$.

 By Fatou's lemma we get
\begin{align*}
r_{\infty}(t)-r_{\infty}(s)
&=
\lim_{n \to \infty} \left(r_n(t)-r_n(s)\right)-
\sum_{k \in \N} \lim_{n \to \infty}
\left(r_{n,k}(t)-r_{n,k}(s)\right)
\\[8pt]
&\geq
\limsup_{n \to \infty}
\left( r_n(t)-r_n(s)-\sum_{k \in \N}
\left(r_{n,k}(t)-r_{n,k}(s)\right)
\right)
\\[8pt]
&=
\limsup_{n \to \infty}
\left( r_{n,\infty}(t)-r_{n,\infty}(s)\right)
\geq 0.
\end{align*}
One can prove similarly that $q_{k,\infty}$ is increasing for $k \in \N$.
In order to prove that
\[
q_{\infty, \infty}(t)-q_{\infty, \infty}(s)  \geq
\limsup_{n \to \infty}\left( q_{n,\infty,\infty}(t)-
q_{n,\infty,\infty}(s) \right)
\]
let
$\alpha_{n,k,l}:=q_{n,k,l}(t)-q_{n,k,l}(s)$ for $k,l \in \barn$.
By \eqref{q_infty_infty_def_lim}
we only need to check
\begin{equation}\label{fapipa456}
\lim_{n \to \infty} \sum_{k,l \in \barn} \alpha_{n,k,l}-
\limsup_{n \to \infty} \alpha_{n,\infty,\infty}
 \geq 2 \sum_{k \in \N} \lim_{n \to \infty} \sum_{l \in
\barn} \alpha_{n,k,l} - \sum_{k,l \in \N} \lim_{n \to \infty}
\alpha_{n,k,l}.
\end{equation}
Let
\[
K_m:= \{ (k,l): \left( k \geq m \text{ and } l=m \right) \text{ or }
\left( l \geq m \text{ and } k=m \right) \}\cup\{(m,\infty)\}\cup
\{(\infty,m) \}.
\]
The left hand side of \eqref{fapipa456} is $\liminf_{n \to \infty} \sum_{m \in
\N} \beta_{n,m}$, the right hand side is $\sum_{m \in \N} \lim_{n
\to \infty} \beta_{n,m}$, where $\beta_{n,m}:= \sum_{(k,l) \in
K_m} \alpha_{n,k,l}$,
and the inequality follows from Fatou's lemma.

Now that we have proved that the limit of convergent flows is
itself a flow, we only need to check that the limit flow is consistent
with the initial condition $\vct{v}$, but this follows from the facts
that $\cE_{\vct{v}}[0,T]$ is a closed metric space and the mapping from
$\cF_{\vct{v}}[0,T]$  to $\cE_{\vct{v}}[0,T]$ defined by \eqref{floweqs}
is continuous with respect to the corresponding topologies.

\end{proof}

Finally we define the space of all FFF-s as follows:
\[
\cD[0,T]
:=
\big\{
\big(\vct{v}, \vct{q}(\cdot),  \vct{r}(\cdot) \big)
\,:\,
\vct{v}\in\cV, \,\,\,
\big(\vct{q}(\cdot),  \vct{r}(\cdot) \big) \in
\cF_{\vct{v}}[0,T]
\big\}.
\]
This space is again a  complete and separable metric space if we
define $\big(\vct{v}_n, \vct{q}_n(\cdot),  \vct{r}_n(\cdot) \big)
\to  \big(\vct{v}, \vct{q}(\cdot),  \vct{r}(\cdot) \big)$ by
requiring $\vct{v}_n \to \vct{v}$ (coordinate-wise) and $\big(
\vct{q}_n(\cdot),  \vct{r}_n(\cdot) \big) \to \big(
\vct{q}(\cdot),  \vct{r}(\cdot) \big)$.

\begin{lemma}
\label{lemma:compactness}
For any $C<\infty$ the subset
\[
\cK_C[0,T]:=
\big\{
\big(\vct{v}, \vct{q}(\cdot),  \vct{r}(\cdot)
\big)\in\cD[0,T] \,:\,
q(T)\le C
\big\}
\]
is compact in $\cD[0,T]$.
\end{lemma}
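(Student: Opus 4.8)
The plan is to prove compactness of $\cK_C[0,T]$ by a diagonal/Helly selection argument followed by a verification that the limit object lies in $\cK_C[0,T]$. Since $\cD[0,T]$ is a complete separable metric space, it suffices to show sequential compactness: given a sequence $\big(\vct{v}_n, \vct{q}_n(\cdot), \vct{r}_n(\cdot)\big)\in\cK_C[0,T]$, extract a convergent subsequence with limit again in $\cK_C[0,T]$.

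First I would handle the $\vct{v}_n$ component: since $\vct{v}_n\in\cV$ and $\cV$ is compact in the product topology (each coordinate lies in $[0,1]$ and $\cV$ is closed under coordinate-wise limits by Fatou), pass to a subsequence with $\vct{v}_n\to\vct{v}\in\cV$ coordinate-wise. Next, for the flow components, observe that for each fixed $n$ the functions $q_{n,k,l}(\cdot)$, $q_{n,k}(\cdot)$, $q_n(\cdot)$, $r_{n,k}(\cdot)$, $r_n(\cdot)$ are increasing, left-continuous, start at $0$, and are uniformly bounded: indeed $q_n(T)\le C$ forces $q_{n,k}(T)\le C$ and $q_{n,k,l}(T)\le C$ for all $k,l$; and from \eqref{floweqs} with $v_{n,k}(t)\ge0$ one gets $r_{n,k}(t)\le r_n(t)$, while summing \eqref{floweqs} and using $\sum_k v_{n,k}(t)\le 1$ together with $\sum_k v_{n,k}(0)\le 1$ and the bound on $q_n$ yields a uniform bound on $r_n(T)$ in terms of $C$. (Concretely, $v_{n,1}(t)\ge 0$ in \eqref{floweqs} gives $r_n(t)\le v_{n,1}(0)+\tfrac12\sum_l q_{n,1,l}(t) + r_{n,1}(t)$, and $r_{n,1}\equiv 0$, so $r_n(T)\le 1+C/2$.) Now apply Helly's selection theorem together with a Cantor diagonal argument over the countable index set $\{(k,l):k,l\in\N\}\cup\{k\in\N\}$ to extract a further subsequence along which $q_{n,k,l}(\cdot)\weak q_{k,l}(\cdot)$, $q_{n,k}(\cdot)\weak q_k(\cdot)$, $q_n(\cdot)\weak q(\cdot)$, $r_{n,k}(\cdot)\weak r_k(\cdot)$, $r_n(\cdot)\weak r(\cdot)$, where all limits are increasing left-continuous functions vanishing at $0$; weak convergence here is convergence of distribution functions at common continuity points, so left-continuous versions are determined.

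The remaining work is to check that the limit data $\big(\vct{v},\vct{q}(\cdot),\vct{r}(\cdot)\big)$, with the missing ingredients $q_{k,\infty}$, $r_\infty$, $q_{\infty,\infty}$ defined by \eqref{q_k_infty_def_lim}, \eqref{r_infty_def_lim}, \eqref{q_infty_infty_def_lim}, actually constitutes an element of $\cD[0,T]$ with $q(T)\le C$. The bound $q(T)\le C$ is immediate since weak convergence of $q_n$ to $q$ on $[0,T]$ plus $q_n(T)\le C$ gives $q(T)\le\liminf q_n(T)\le C$ (using left-continuity at $T$, or approximating $T$ by continuity points from below). That $\big(\vct{q}(\cdot),\vct{r}(\cdot)\big)$ is a genuine FFF consistent with $\vct{v}$ — i.e. lies in $\cF_{\vct{v}}[0,T]$ — is precisely the content of Lemma \ref{lemma:polish}, whose hypotheses are now met: we have a sequence of FFFs in $\cF_{\vct{v}_n}[0,T]$ converging in the sense of Definition \ref{def:flow_topology}. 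The only subtlety is that Lemma \ref{lemma:polish} as stated fixes the initial condition $\vct{v}$, whereas here $\vct{v}_n\to\vct{v}$ varies; but the proof of Lemma \ref{lemma:polish} (the Fatou arguments establishing monotonicity of $r_\infty$, $q_{k,\infty}$, $q_{\infty,\infty}$, and the continuity of the map \eqref{floweqs}) goes through verbatim with $\vct{v}_n(0)=\vct{v}_n\to\vct{v}=\vct{v}(0)$, because \eqref{floweqs} depends continuously on $v_k(0)$ and $\cE_{\vct{v}}[0,T]$ varies upper-semicontinuously in a way compatible with coordinate-wise limits. I expect this last point — correctly bookkeeping the variable initial condition through the Fatou/monotonicity estimates of Lemma \ref{lemma:polish} and confirming $\sum_k v_k(t)\le 1$ and $v_k(t)\ge 0$ survive in the limit — to be the main (though largely routine) obstacle; the compactness extraction itself is standard Helly plus diagonalization.
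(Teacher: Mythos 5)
Your proposal is correct and follows essentially the same route as the paper: a uniform bound on the burning terms deduced from \eqref{floweqs} and $q(T)\le C$, then Helly's selection theorem with a diagonal argument, and membership of the limit in $\cD[0,T]$ via (the argument of) Lemma \ref{lemma:polish}, the only cosmetic difference being that the paper gets its bound $r(T)\le 2+C$ by summing \eqref{floweqs} with weights $1/k$, whereas you use the $k=1$ equation directly, which works just as well. One slip to repair: your parenthetical derivation invokes $v_{n,1}(t)\ge 0$, which only gives a \emph{lower} bound on $r_n(t)$ (and the quoted inequality also contains a spurious coagulation-gain sum, absent for $k=1$); the correct one-line argument uses the other constraint, $v_{n,1}(t)\le 1$, in $r_n(t)=v_{n,1}(t)-v_{n,1}(0)+q_{n,1}(t)$, yielding $r_n(T)\le 1+C$, after which everything else in your argument goes through.
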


\begin{proof}
\[
\lim_{K \to \infty}\left[ \frac{1}{2} \sum_{k=1}^{K}
\sum_{l=1}^{k-1}q_{l,k-l}(T) -\sum_{k=1}^K  q_{k}(T) \right] = -\frac12 q(T)
+\frac{1}{2}q_{\infty,\infty}(T)
\]
by $q(T)\leq C$, dominated
convergence and $q_{k,l}=q_{l,k}$.  Thus summing the equations
\eqref{floweqs} with coefficients $\frac{1}{k}$ we get
\begin{equation*}
\label{flow_equality_1}
\sum_{k=1}^{\infty}\frac{1}{k}v_k(T)-\sum_{k=1}^{\infty} \frac{1}{k} v_k(0) +
\frac12 q(T) = \sum_{k=2}^{\infty} \frac{k-1}{k} r_k(T) +r_{\infty}(T) +
\frac{1}{2}q_{\infty,\infty}(T).
\end{equation*}
The inequalities
\begin{equation}
\label{bounds_on_flow}
r(T) \leq 2 + C, \quad r_{\infty}( T) \leq 1 +
 \frac{1}{2} C, \quad r_k(T) \leq
(1+\frac{C}{2}) \frac{k}{k-1}
\end{equation}
follow from
$\vct{v}(T) \in \cV$ and  $q(T)\leq C$.

By Helly's selection theorem and a diagonal argument we can choose
a convergent subsequence from any sequence of elements of $\cK_C[0,T]$ with
 the limiting FFF itself being an element of  $\cK_C[0,T]$.

\end{proof}

\subsection{The Markov process}
\label{subsection:Markov_process}
It is easy to see that in order to prove Theorem
\ref{theorem:weak_conv_to_unique} we do not need to know anything
about the graph structure of the connected components: by the mean
field property of the dynamics the stochastic process
$\vct{v}_n(t)$ defined by \eqref{def_of_v_k_in_markov} and
\eqref{def_of_vct_v_in_markov} is itself a Markov chain.

The state space of the Markov chain $t\mapsto \vct{V}_n(t)$ is:
\[
\Omega_n
:=
\big\{\vct{V}=\big(V_k)_{k\in\N}\,:\,
V_k\in \{0,k,2k,\dots\},\,\, \sum_{k\ge1}V_k=n\big\}
\]

The allowed jumps of the Markov chain are described by the following
jump transformations for $i \leq j$:
\begin{align*}
&
\sigma_{i,j}:\big\{\vct{V}\in\Omega_n\,:\,
V_i\big(V_j-j\ind{i=j}\big)> 0\big\}\to\Omega_n,
\\[8pt]
&\hskip6cm
\big(\sigma_{i,j}\vct{V}\big)_k
:=
V_k-i\ind{k=i}-j\ind{k=j}+(i+j)\ind{k=i+j},
\\[16pt]
&
\tau_i: \big\{\boV\in\Omega_n:V_i>0\big\}\to\Omega_n,
\hskip9.3mm
\big(\tau_{i}\vct{V}\big)_k
:=
V_k+i\ind{k=1}-i\ind{k=i}
\end{align*}
The corresponding jump rates are $a_{n,i,j},
b_{n,i}:\Omega_n\to\R_+$:
\[
a_{n,i,j}(\vct{V}):= \left((1+ \ind{i=j}) n \right)^{-1} V_i
\big(V_j - j \ind{i=j}\big), \qquad b_{n,i}(\vct{V}):= \lambda(n)
V_i.
\]
The infinitesimal generator of the chain is :
\[
L_nf(\vct{V})= \sum_{i \leq j} a_{n,i,j}(\vct{V})
\big(f(\sigma_{i,j}\vct{V})-f(\vct{V})\big) + \sum_{i}
b_{n,i}(\vct{V}) \big(f(\tau_{i}\vct{V})-f(\vct{V})\big).
\]

We denote by $Q_{n,k,l}(t)$ and by $R_{n,k}(t)$ the number of
$\sigma_{k,l}$-jumps, respectively  $k$-times the number of
$\tau_k$-jumps occurred in the time interval $[0,t]$:
\begin{align}\label{def_of_Qnkl}
Q_{n,k,l}(t) & := \big(1+\ind{k=l}\big)\cdot \abs{\big\{s\in[0,t]\,:\,
\vct{V}_n(s+0)=\big(\sigma_{k,l}\vct{V}_n\big)(s-0)\big\}},
\\[8pt]
\label{def_of_Rnk}  R_{n,k}(t) & := \ind{k\not=1} k \cdot \abs{\big\{s\in[0,t]\,:\,
\vct{V}_n(s+0)=\big(\tau_{k}\vct{V}_n\big)(s-0)\big\}}.
\end{align}
Finally, the scaled objects are
\begin{align}
\label{markov_flow_weights}
v_{n,k}(t)
&:=n^{-1}V_{n,k}(t),
&&
&& \vct{v}_n(t) :=
\big(v_{n,k}(t)\big)_{k\in\N},
\\[8pt]
\label{markov_flow_coag}
q_{n,k,l}(t)
&:= n^{-1}Q_{n,k,l}(t),
&&
q_{n,k,\infty}(t)\equiv 0,
&& \vct{q}_n(t) :=
\big(q_{n,k,l}(t)\big)_{k,l\in\barn},
\\[8pt]
\label{markov_flow_fire}
r_{n,k}(t)
&:= n^{-1}R_{n,k}(t),
&&
r_{n,\infty}(t)\equiv 0,
&& \vct{r}_n(t) :=
\big(r_{n,k}(t)\big)_{k\in\barn}
\end{align}
 Now, given $T\in(0,\infty)$ and
some initial conditions $\vct{v}_n(0)=\vct{v}_n\in\cV_1$, clearly
$t\mapsto\vct{v}_n(t)\in\cV_1$ is a conservative FFE, generated by
the FFF $\big(\vct{v}_n,\vct{q}_n(\cdot),\vct{r}_n(\cdot)\big)
\in\cD[0,T]$ through \eqref{floweqs}. We denote by $\P_{n}$ the
probability distribution of this process on $\cD[0,T]$. We will
always assume that the initial conditions converge, as
$n\to\infty$, to a deterministic element of $\cV_1$:
\begin{equation}
\label{inilim}
\lim_{n\to\infty}v_{n,k}(0)=v_k, \qquad
\vct{v}:=(v_k)_{k\in\N}\in\cV_1.
\end{equation}

\begin{proposition}
\label{propo:weaklimit}
The sequence of probability measures
$\P_{n}$ is tight on $\cD[0,T]$. If $\lambda(n) \ll 1$, then any
weak limit point $\P$  of the sequence  $\P_{n}$ is concentrated
on that subset of $\cD[0,T]$ for which the following hold for $k,l
\in \N$:
\begin{align}
\label{smoluk>1}
&
q_{k,l}(t)
=
\int_0^t v_{k}(s) v_{l}(s) ds,
\quad
q_{k}(t)
=
\int_0^t v_k(s) ds,
\quad
q(t)\leq t,
\quad
r_k(t) \equiv 0
\\[8pt]
\label{inicond}
&
\vct{v}(0)=\vct{v}.
\end{align}
\end{proposition}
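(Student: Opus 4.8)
The plan is to establish tightness of $\P_n$ on $\cD[0,T]$ first, then identify the limit points. For tightness, I would use the compact sets $\cK_C[0,T]$ from Lemma \ref{lemma:compactness}: it suffices to show that $\sup_n \expect{q_n(T)}$ is bounded, or more precisely that $\prob{q_n(T) > C}$ is small uniformly in $n$ for large $C$. Since $q_n(T) = n^{-1} Q_n(T)$ counts all coagulation events, and each coagulation event of a $(k,l)$-pair happens at rate $a_{n,k,l}(\vct{V}) \le n^{-1} V_k V_l$, summing gives total coagulation rate at most $n^{-1}(\sum_k V_k)^2/2 = n/2$; hence $\expect{Q_n(t)}$ grows at most linearly, $\expect{q_n(t)} \le t/2 \le T/2$, and Markov's inequality closes tightness via Lemma \ref{lemma:compactness}. (One should be slightly careful: the presence of burning only removes mass and does not increase the coagulation rate, so the bound is robust.) Actually the cleanest route is the exact conservation identity: the total mass $\sum_k k\,V_{n,k}(t) = n$ is conserved, and the number of vertices that can still coagulate is controlled, yielding $q_n(T) \le T$ deterministically-in-expectation after scaling; in any case tightness follows.

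Next, to identify a weak limit point $\P$, pass to a subsequence along which $\P_n \weak \P$ and (by Skorokhod) realize the convergence $\big(\vct{v}_n, \vct{q}_n, \vct{r}_n\big) \to \big(\vct{v}, \vct{q}, \vct{r}\big)$ almost surely in $\cD[0,T]$. The key martingale computation: for fixed $k,l \in \N$, Dynkin's formula applied to the counting process $Q_{n,k,l}$ gives that
\[
M_{n,k,l}(t) := Q_{n,k,l}(t) - \int_0^t \big(1+\ind{k=l}\big) a_{n,k,l}(\vct{V}_n(s))\, ds
\]
is a martingale; dividing by $n$ and using $n^{-1} a_{n,k,l}(\vct{V}_n(s)) = v_{n,k}(s) v_{n,l}(s)(1+\ind{k=l})^{-1} + O(n^{-1})$ shows
\[
q_{n,k,l}(t) = \int_0^t v_{n,k}(s) v_{n,l}(s)\, ds + n^{-1} M_{n,k,l}(t) + O(n^{-1}t).
\]
The predictable quadratic variation of $M_{n,k,l}$ is bounded by $\expect{Q_{n,k,l}(T)} = O(n)$, so $n^{-1} M_{n,k,l}(T) \to 0$ in $L^2$, and Doob's inequality upgrades this to uniform convergence on $[0,T]$ in probability. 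Combined with the a.s.\ convergence $v_{n,k}(\cdot) \weak v_k(\cdot)$ and dominated convergence (the $v_{n,k}$ are uniformly bounded by $1$), we get $q_{k,l}(t) = \int_0^t v_k(s) v_l(s)\, ds$ for continuity points $t$, hence for all $t$ by left-continuity; summing over $l \in \N$ and using $\sum_l v_l(s) \le 1$ gives $q_k(t) = \int_0^t v_k(s) \big(\sum_l v_l(s)\big) ds \le \int_0^t v_k(s)\, ds$ — wait, I need the full sum over $l \in \barn$; but $q_{n,k,\infty} \equiv 0$, and one must verify that in the limit the "escaping mass into the giant component" does not contribute to $q_k$ for the purposes of this bound. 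Since $q_{n,k}(t) = \sum_{l \in \N} q_{n,k,l}(t)$ and each term converges, monotone/dominated convergence gives $q_k(t) = \int_0^t v_k(s)\big(\sum_{l \in \N} v_l(s)\big)\, ds \le \int_0^t v_k(s)\, ds$, and $q(t) = \sum_k q_k(t) \le \int_0^t \sum_k v_k(s)\, ds \le t$.

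The remaining point is $r_k(t) \equiv 0$ for $k \in \N$ when $\lambda(n) \ll 1$. Here the burning rate of size-$k$ clusters is $b_{n,k}(\vct{V}_n) = \lambda(n) V_{n,k}(s) \le \lambda(n) n$, so $\expect{R_{n,k}(T)} \le k \lambda(n) n \cdot T \cdot$ (something); more carefully $\expect{R_{n,k}(t)} = k\int_0^t \lambda(n)\,\expect{V_{n,k}(s)/k \cdot \text{(count)}}$ — the number of $\tau_k$-jumps up to $t$ has expectation $\int_0^t \lambda(n) \expect{V_{n,k}(s)}\,ds \le \lambda(n) n t$, so $\expect{r_{n,k}(t)} = n^{-1} k \cdot \expect{\#\tau_k\text{-jumps}} \le k\lambda(n) t \to 0$. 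Markov's inequality gives $r_{n,k}(t) \to 0$ in probability for each fixed $t$; monotonicity of $r_{n,k}$ in $t$ plus a dense-set argument upgrades to $r_k(\cdot) \equiv 0$ in the limit. Finally $\vct{v}(0) = \vct{v}$ is immediate from \eqref{inilim} since the initial data converge deterministically.

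\textbf{Main obstacle.} The genuinely delicate step is controlling the coupling between the burning mechanism and the coagulation flow at all scales simultaneously — i.e.\ making sure that the $O(n^{-1})$ error terms in the generator computation are uniform in $k$ when we sum, and that no mass is "lost to infinity" in a way that breaks $q(t) \le t$. I expect the martingale estimates themselves to be routine (second-moment bounds on counting processes with bounded compensated rates), but the bookkeeping of the $k = \infty$ / giant-component entries, and justifying the interchange of limits $n \to \infty$ and $\sum_l$ / $\sum_k$ using only the a priori bound $q_n(T) \le C$, is where care is needed; this is exactly what Lemma \ref{lemma:polish} and Lemma \ref{lemma:compactness} were set up to handle, so the proof should invoke them rather than redo the estimates.
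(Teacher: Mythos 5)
Your overall strategy is the paper's: compensated counting-process martingales plus Doob's maximal inequality to identify the compensators, tightness from a bound on $\expect{q_n(T)}$ via Markov's inequality and Lemma \ref{lemma:compactness}, and $r_{n,k}\to 0$ from the rate bound $\lambda(n)V_{n,k}\le\lambda(n)n$; those parts are fine (the paper gets $\expect{q_n(T)}\le T$ from the martingale $\wt q_n$; your factor $1/2$ vs.\ the symmetric counting in $q_n$ is immaterial). The genuine gap is in how you identify $q_k$ and $q$. You produce $q_k$ by summing the limits $q_{k,l}(t)=\int_0^t v_k(s)v_l(s)\,ds$ over $l\in\N$ and assert that monotone/dominated convergence justifies $\lim_n\sum_{l\in\N}q_{n,k,l}=\sum_{l\in\N}\lim_n q_{n,k,l}$. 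That interchange is precisely what fails in this model: coagulation mass escapes to the $l=\infty$ entry, which is why Definition \ref{def:flow_topology} requires convergence of $q_{n,k}(\cdot)$ and $q_n(\cdot)$ as separate coordinates and why $q_{k,\infty}$, $r_\infty$, $q_{\infty,\infty}$ are \emph{defined} through \eqref{q_k_infty_def_lim}--\eqref{q_infty_infty_def_lim} rather than obtained as limits (Fatou, as in Lemma \ref{lemma:polish}, only gives $q_{k,\infty}\ge0$). Concretely, the hypothesis $\lambda(n)\ll1$ includes the Erd\H os--R\'enyi regime $\lambda(n)\ll n^{-1}$, where after gelation $q_k(t)=\int_0^t v_k(s)\,ds$ but $\sum_{l\in\N}q_{k,l}(t)=\int_0^t v_k(s)\big(1-\theta(s)\big)\,ds<q_k(t)$ (cf.\ \eqref{ER_giant_flow_integral}); so your claimed identity $q_k(t)=\int_0^t v_k(s)\sum_{l\in\N}v_l(s)\,ds$ is false there, and it is in any case weaker than the asserted equality $q_k(t)=\int_0^t v_k(s)\,ds$. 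You also cannot repair it by using $\sum_l v_l(t)\equiv1$ in the limit: that is Proposition \ref{propo:conservative}, which is proved \emph{later} and relies on the present proposition, so invoking it here would be circular. The same flaw propagates to your derivation of $q(t)\le t$ by summing over $k$.

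The fix is to treat $q_{n,k}$ and $q_n$ directly, as the paper does. At finite $n$ there is no infinite cluster and $\sum_l V_{n,l}=n$, so the compensator of $q_{n,k}$ is exactly $\int_0^t v_{n,k}(s)\big(1-\tfrac kn\big)\,ds$ and that of $q_n$ is $t-\tfrac1n\int_0^t m_{n,1}(s)\,ds\le t$; the corresponding martingales $\wt q_{n,k}$, $\wt q_n$ and their quadratic variations, together with Doob's inequality, give $\sup_{t\le T}\big|q_{n,k}(t)-\int_0^t v_{n,k}(s)\,ds\big|\toprob0$ and $\sup_{t\le T}\big(q_n(t)-t\big)\toprob0$ from above. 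Since $q_{n,k}\weak q_k$ and $q_n\weak q$ are part of the convergence in $\cD[0,T]$, the equality $q_k(t)=\int_0^t v_k(s)\,ds$ and the bound $q(t)\le t$ pass to any weak limit point without ever interchanging $\lim_n$ with an infinite sum.
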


\begin{proof}
There is nothing to prove about the initial condition \eqref{inicond}:
it was a priori assumed in \eqref{inilim}.

In order to prove the validity of the integral equations
\eqref{smoluk>1}, note first that it is straightforward that the
processes $\wt q_{n,k,l}(t)$, $\lan\wt q_{n,k,l}\ran(t)$, $\wt
q_{n,k}(t)$, $\lan\wt q_{n,k}\ran(t)$, $\wt r_{n,k}(t)$, $\lan\wt
r_{n,k}\ran(t)$, defined below are martingales:
\begin{align*}
\wt q_{n,k,l}(t)
&:=
q_{n,k,l}(t) - \int_0^t v_{n,k}(s)v_{n,l}(s) ds +
\frac{k\ind{k=l}}{n}\int_0^t v_{n,k}(s) ds,
\\[8pt]
\lan \wt q_{n,k,l}\ran(t)
&:=
\wt q_{n,k,l}(t)^2 -
\frac{\ind{k\not=l}+2\ind{k=l}}{n} \int_0^t v_{n,k}(s)v_{n,l}(s)
ds + \frac{2k\ind{k=l}}{n^2}\int_0^t v_{n,k}(s) ds,
\\[8pt]
\wt q_{n,k}(t)
&:=
q_{n,k}(t) - \int_0^t v_{n,k}(s) ds +
\frac{k}{n}\int_0^t v_{n,k}(s) ds,
\\[8pt]
\lan \wt q_{n,k}\ran(t)
&:=
\wt q_{n,k}(t)^2 - \frac{1}{n} \int_0^t
\big(v_{n,k}(s)^2+ v_{n,k}(s)\big)ds + \frac{2k}{n^2} \int_0^t
v_{n,k}(s) ds,
\\[8pt]
\wt q_{n}(t)
&:=
q_n(t) -t + \frac{1}{n}
\int_0^t m_{n,1}(s)ds,
\\[8pt]
\lan \wt q_{n}\ran(t)
&:=
\wt q_n(t)^2 -\frac{1}{n}
\left(t+ \int_0^t \sum_{k=1}^n v_{n,k}(s)^2
ds \right) +\frac{2}{n^2} \int_0^t m_{n,1}(s)ds,
\\[8pt]
\wt r_{n,k}(t)
&:=
r_{n,k}(t) - \lambda(n) k\int_0^t  v_{n,k}(s) ds,
\\[8pt]
\lan \wt r_{n,k}\ran(t)
&:=
\wt r_{n,k}(t)^2 -
\frac{\lambda(n) k^2}{n} \int_0^t  v_{n,k}(s) ds.
\end{align*}
From Doob's
maximal inequality it readily follows that for any $k,l\in\N$ and
$\vareps>0$
\begin{align*}
& \lim_{n\to\infty} \prob{\sup_{0\le t \le T}
\big|\,q_{n,k,l}(t)-\int_0^t v_{n,k}(s)v_{n,l}(s) ds\,\big|
>\vareps}=0,
\\[8pt]
& \lim_{n\to\infty} \prob{\sup_{0\le t \le T}
\big|\,q_{n,k}(t)-\int_0^t v_{n,k}(s) ds\,\big|
>\vareps}=0,
\\[8pt]
& \lim_{n\to\infty} \prob{\sup_{0\le t \le T} \,q_{n}(t)-t \,
>\vareps}=0,
\\[8pt]
& \lim_{n\to\infty} \prob{\sup_{0\le t \le T}
\big|\,r_{n,k}(t)\,\big|
>\vareps}=0.
\end{align*}
Hence \eqref{smoluk>1}. Tightness follows from
\begin{equation}
\label{q_n_upper_bound}
\expect{q_n(T)}\leq T,
\end{equation}
Markov's
inequality and Lemma \ref{lemma:compactness}.
\end{proof}

If we consider the case $\lambda(n) \equiv 0$ (this is the dynamical Erd\H os-R\'enyi model) then
\eqref{limprob}+\eqref{smolu1} follows from Proposition \ref{propo:weaklimit} since \eqref{floweqs} becomes
\[ v_k(t)=v_k(0)+
\frac{k}{2} \sum_{l=1}^{k-1}q_{l,k-l}(t)
-k q_k(t)= v_k(0) + \int_0^t \frac{k}{2} \sum_{l=1}^{k-1} v_l(s)v_{k-l}(s) - k v_k(s)\, ds\]
which is the integral form of \eqref{smolu1}. Plugging \eqref{smoluk>1} into \eqref{q_k_infty_def_lim} we get  for $t>T_{\text{gel}}$
\begin{equation}\label{ER_giant_flow_integral}
 q_{k,\infty}(t)=\int_0^t v_k(s) \theta(s)\, ds>0.
\end{equation}

\subsection{The integrated Burgers control problem}
\label{subsection:integrated_burgers_control_problem}

If $\vct{v}(\cdot) \in \cE_{\vct{v}_0}[0,T]$  is generated by a
FFF satisfying \eqref{smoluk>1} through \eqref{floweqs}, then
\[ r(\cdot)=\sum_{k \in \barn} r_k(\cdot)= \sum_{k=1}^{\infty} r_k(\cdot)+ r_\infty(\cdot)
=\sum_{k=1}^{\infty} 0 + r_{\infty}(\cdot) =r_{\infty}(\cdot)\]
and $\vct{v}(\cdot)$ is a solution of the \emph{controlled
Smoluchowski integral equations} with control function
$r(\cdot)$:
\begin{align}
\label{smoluk2}
& v_k(t)= v_k(0) + \frac{k}{2} \sum_{l=1}^{k-1}
\int_0^t v_{l}(s) v_{k-l}(s) ds - k \int_0^t v_k(s) ds + \ind{k=1}
r(t), \hskip1cm k \in \N
\\[8pt]
\label{inequality_smol}
&
 v_k(t)\geq 0, \quad \sum_{k=1}^\infty v_k(t)\leq 1
\\[8pt]
\label{inicond2}
&
\vct{v}(0)=\vct{v}_0\in\cV_1.
\end{align}

By $q(T)\leq T$, $r_{\infty}(\cdot)=r(\cdot)$ and \eqref{bounds_on_flow} we get
\begin{equation}
\label{papri}
0=r(0) \leq r(s) \leq r(t)
\quad\text{ for }\quad
0\leq s \leq t \leq T, \quad
 r(T)\le 1+\frac{T}{2}.
\end{equation}
Using induction on $k$ one can see that the initial condition
$\vct{v}_0$ and the control function $r(\cdot)$
determines the solution of \eqref{smoluk2}, \eqref{inicond2}
uniquely.

For $\vct{v}\in\cV$ we introduce the generating function
\begin{equation}
\label{gfdef}
V:[0,\infty)\to[-1,0],
\qquad
V(x):=\sum_{k=1}^\infty v_k e^{-kx}-1.
\end{equation}
$x\mapsto V(x)$ is analytic on $(0,\infty)$ and has the following
straightforward  properties:
\begin{equation}
\label{gfprop1}
\lim_{x\to\infty}V(x)=-1,
\qquad
V^{\prime}(x)\le0,
\qquad
V^{\prime\prime}(x)\ge0.
\end{equation}

It is easy to see that if $t\mapsto\vct{v}(t)$ is a solution of
\eqref{smoluk2}, \eqref{inequality_smol}, \eqref{inicond2} then
the corresponding generating functions $t\mapsto V(t,\cdot)$ will
solve the \emph{integrated Burgers control problem}
\begin{align}
\label{burgers2}
&
V(t,x) -V(0,x) +  \int_{0}^t V(s,x) V^{\prime}(s,x)ds
=
e^{-x} r(t),
\\[8pt]
\label{burgers_boundary_ineq}
&
-1\leq V(t,0)\leq 0
\\[8pt]
\label{burinicond2}
&
V(0,x)=V_0(x).
\end{align}
The control function $r(\cdot)$ was defined to be continuous from the left in \eqref{def_of_q_r}, but it need not
be continuous: when $\lambda(n)=n^{-1} \lambda$ then the FFE obtained as the $n \to \infty$ limit  satisfies \eqref{smoluk2}, \eqref{inequality_smol}, \eqref{inicond2}, but the control function $r(\cdot)$ evolves randomly according to the rules \eqref{giant_lightning}, \eqref{giant_jump}:
\[ \condprob{ r(t+dt)=r(t)+\theta(t)}{\cF(t)}=\lambda \theta(t)dt+ \ordo(dt)\]
Thus $r(\cdot)$ is a random step function in this case.

In order to rewrite \eqref{burgers2} as a differential equation we introduce a new time variable $\tau$:
\begin{equation}\label{tau_max_def}
 t(\tau):=\max \{t: t+r(t) \leq \tau \}
 \end{equation}
It is easily seen that
 $t(\tau)$ is increasing and Lipschitz-continuous:
\be \label{alpha_density_radon}
 t(\tau)= \int_0^{\tau} \alpha(s)\, ds \qquad 0 \leq \alpha(\cdot) \leq 1
\ee
 Given a solution $V(t,x)$ of \eqref{burgers2}, \eqref{burgers_boundary_ineq}, \eqref{burinicond2} define
\begin{equation}\label{V_tau_explicit_def}
\rbV(\tau,x):=V(t(\tau),x)+\left( \tau - t(\tau)-r(t(\tau)) \right)e^{-x}
\end{equation}
Then by \eqref{burgers2} we have
\begin{equation}\label{integrated_burgers_tau}
 \rbV(\tau,x)=V(0,x)- \int_0^{t(\tau)} V(s,x)V'(s,x) \, ds + (\tau-t(\tau))e^{-x}.
 \end{equation}

Now we show that for all $\tau \geq 0$, $x>0$ and $t \geq 0$ we have
\begin{align}
\label{burgers_tau}
&
\partial_{\tau} \rbV(\tau,x)=-\rbV(\tau,x)\rbV'(\tau,x) \alpha(\tau)+ (1-\alpha(\tau))e^{-x}
\\[8pt]
\label{burgers_boundary_ineq_tau}
&
-1\leq \rbV(\tau,0)\leq 0
\\[8pt]
\label{burinicond3}
&
\rbV(0,x)=V_0(x)
\\[8pt]
\label{V_tau_V_t_identity}
&\rbV(t+r(t),x)=V(t,x)
\end{align}
First note that the fact
\begin{equation}\label{V_tau_noteq_alpha_zero}
 \rbV(\tau,x)\neq V(t(\tau),x) \qquad \implies \qquad  \alpha(\tau)=0
\end{equation}
follows directly from \eqref{tau_max_def}, \eqref{alpha_density_radon} and \eqref{V_tau_explicit_def}:  if
$r(t_+)\neq r(t)$, then $\alpha(\tau)=0$ for all $t+r(t)< \tau \leq t+r(t_+)$.
The differential equation \eqref{burgers_tau} follows from \eqref{alpha_density_radon}, \eqref{integrated_burgers_tau}
and \eqref{V_tau_noteq_alpha_zero}.
 The boundary inequality
  \eqref{burgers_boundary_ineq_tau} follows from
 \[ -1 \leq V(t(\tau),x)\leq \rbV(\tau,x) \leq
V(t(\tau)_+,x) \leq 0. \]
The initial conditions \eqref{burinicond2} and \eqref{burinicond3} are equivalent, and \eqref{V_tau_V_t_identity}
follows from \eqref{V_tau_explicit_def} and \eqref{tau_max_def}.

From the definition of Lebesgue-Stieltjes integration it follows that for all $t_1 \leq t_2$ we have
\begin{equation} \label{change_of_variables_tau}
\int_{t_1+r(t_1)}^{t_2+r(t_2)} f(t(\tau))(1-\alpha(\tau))\, d \tau = \int_{t_1}^{t_2} f(t)\, dr(t)
\end{equation}
\section{Boundary behavior}
\label{section:boundary_behaviour}

\subsection{Elementary facts about generating functions}
\label{subsection:elementary}

In this subsection we collect some elementary facts about generating
functions, which will be used along the proof of Theorem \ref{theorem:uniqueness} and Theorem \ref{theorem:weak_conv_to_unique}.
For $\vct{v}\in\cV$ we introduce the generating function
$V(x)$ defined in \eqref{gfdef} which has the straightforward
properties listed in \eqref{gfprop1}. It is also easy
to see that for
any $\vct{v}\in\cV$ and any $x>0$
\begin{equation}
\label{trivibounds}
|V^{\prime}(x)|\le  \frac{1}{e} x^{-1} ,
\qquad
V^{\prime\prime}(x)\le \left(\frac{2}{e}\right)^2 x^{-2} ,
\qquad
|V^{\prime\prime\prime}(x)|\le \left(\frac{3}{e}\right)^3  x^{-3}.
\end{equation}

We define the functions
$E:(0,\infty)\to(0,\infty)$,
$E^*:[0,\infty)\to(0,\infty]$,
$E_*:[0,\infty)\to[0,\infty)$
as follows:
\begin{equation} \label{E_def_no_time}
E(x)
:=
-
\frac{V^{\prime}(x)^3}{V^{\prime\prime}(x)},\quad E^*(x):=\sup_{0 < y\le x} E(y),
\quad E_*(x):=\inf_{0 < y\le x} E(y)
\end{equation}

Note that these functions are continuous on their domain of
definition.

\begin{lemma}
\label{lemma:calculus}

Let $\vct{v}\in\cV_1$.

\begin{enumerate}
\item
For any $x>0$
\begin{eqnarray}
\label{aprioribound1}
0
<
&
V(x)V^{\prime}(x)
&
\le
E^*(x).
\end{eqnarray}
\item
If in addition
\begin{equation}
\label{infinitederivative}
V^{\prime}(0)
:=
\lim_{x\to0}V^{\prime}(x)
=-\infty
\end{equation}
then  the following bounds hold
\begin{eqnarray}
\label{trueboundv}
2^{1/2}E_*(x)^{1/2}x^{1/2}
\le
&
-V(x)
&
\le
2^{1/2}E^*(x)^{1/2}x^{1/2}
\\[8pt]
\label{trueboundvprime}
2^{-1/2}E_*(x)E^*(x)^{-1/2}x^{-1/2}
\le
&
-V^{\prime}(x)
&
\le
2^{-1/2}E^*(x)E_*(x)^{-1/2}x^{-1/2}
\\[8pt]
\notag
\label{trueboundvprimeprime}
2^{-3/2}E_*(x)^3 E^*(x)^{-5/2}x^{-3/2}
\le
&
V^{\prime\prime}(x)
&
\le
2^{-3/2}E^*(x)^3 E_*(x)^{-5/2}x^{-3/2}
\\[8pt]
\label{trueboundvvprime}
E_*(x)
\le
&
V(x)V^{\prime}(x)
&
\le
E^*(x).
\end{eqnarray}
\end{enumerate}
\end{lemma}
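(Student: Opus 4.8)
The statement is an entirely "local" fact about a single generating function $V$ attached to a $\vct v\in\cV_1$, and the whole proof should flow from the definition $E(x)=-V'(x)^3/V''(x)$ and the sign properties $V\le 0$, $V'\le 0$, $V''\ge0$ recorded in \eqref{gfprop1}. The plan is to extract from the definition of $E$ a first-order differential relation for $V'$, integrate it, and then use the boundary condition to evaluate the integration constant.

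First I would derive the key ODE. Since $V''>0$ on $(0,\infty)$ (strict because $\vct v\in\cV_1$ forces $V$ non-constant), the definition $E(x)=-V'(x)^3/V''(x)$ rearranges to
\[
\frac{V''(x)}{V'(x)^3}=-\frac{1}{E(x)},
\]
and the left side is exactly $-\tfrac12\,\frac{d}{dx}\!\left(V'(x)^{-2}\right)$. Hence
\[
\frac{d}{dx}\Big(V'(x)^{-2}\Big)=\frac{2}{E(x)}.
\]
Integrating from $x$ to some larger point and using $E_*(x)\le E(y)\le E^*(x)$ for $0<y\le x$ (monotonicity of $\inf$ and $\sup$), I get two-sided linear-in-$x$ bounds on $V'(x)^{-2}$, hence on $|V'(x)|$. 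Similarly, writing $\frac{d}{dx}\big(V(x)^{-2}\big)$ is not directly clean, so instead I would work with $V V'$: note $\frac{d}{dx}\big(V(x)^2\big)=2V(x)V'(x)$ and combine with the $V'$ estimate. Concretely, from $(V^{-2})$-type manipulations or simply from $\frac{d}{dx}\big(\tfrac12 V(x)^2\big)=V V'$ together with the relation $V V'=-E\cdot V''/V'^2\cdot V$ ... — the slicker route is: observe that $V V'$ and $E$ are comparable because $\frac{d}{dx}(V^2)^{-1/2}$-style identities let one peel off one factor at a time. The cleanest bookkeeping is to prove \eqref{trueboundvvprime} first — that $E_*(x)\le V(x)V'(x)\le E^*(x)$ — directly, then deduce the bounds on $-V$, $-V'$, $V''$ algebraically.

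For \eqref{trueboundvvprime}: set $W(x):=V(x)V'(x)\le 0$... actually $V\le0$, $V'\le 0$, so $VV'\ge0$; and I claim $W'(x)=V'(x)^2+V(x)V''(x)$. I would like to show $W$ is squeezed between $E_*$ and $E^*$. The idea: from $E=-V'^3/V''$ we have $V''=-V'^3/E$, so $W'=V'^2+V\cdot(-V'^3/E)=V'^2\big(1+(-V)\cdot(-V')/E\cdot(-1)\big)$; being careful with signs, $W'=V'^2\big(1-W(-V')\,\cdot\text{(something)}\big)$ — the precise manipulation gives a relation of the form $W'=V'^2\,(E-W)/E$ or similar, from which, since $V'^2/E\ge0$, $W$ is driven monotonically toward the band $[E_*,E^*]$; combined with the boundary behavior as $x\to\infty$ ($V\to-1$, $V'\to 0$, so $W\to 0$) under hypothesis \eqref{infinitederivative} and as $x\to0$, one pins $W$ into $[E_*(x),E^*(x)]$. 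Part (1), inequality \eqref{aprioribound1}, is the half of this that needs \emph{no} extra hypothesis: the lower bound $VV'>0$ is immediate from strictness of the sign properties, and the upper bound $VV'\le E^*(x)$ should come from the same monotonicity/Gronwall argument run only from the right, since near $+\infty$ one controls the boundary value without knowing $V'(0)$.

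Once \eqref{trueboundvvprime} (and \eqref{aprioribound1}) are in hand, the remaining three inequalities \eqref{trueboundv}, \eqref{trueboundvprime}, \eqref{trueboundvprimeprime} are pure algebra: from $V''=-V'^3/E$ and $VV'\in[E_*,E^*]$ and the integrated identity $\frac{d}{dx}(V'^{-2})=2/E$, I solve for each of $-V$, $-V'$, $V''$ in terms of $x$ and the $E_*,E^*$ band, picking up the powers $x^{1/2}$, $x^{-1/2}$, $x^{-3/2}$ and the constants $2^{1/2}$, $2^{-1/2}$, $2^{-3/2}$ exactly as stated. Here is where hypothesis \eqref{infinitederivative}, $V'(0)=-\infty$, is essential: it forces $V'(x)^{-2}\to 0$ as $x\to0$, so that integrating $\frac{d}{dx}(V'^{-2})=2/E$ from $0$ gives $V'(x)^{-2}=\int_0^x 2/E(y)\,dy$, and the comparison $E_*(x)\le E\le E^*(x)$ on $(0,x]$ yields $2x/E^*(x)\le V'(x)^{-2}\le 2x/E_*(x)$, i.e. \eqref{trueboundvprime}; then \eqref{trueboundvvprime} plus this gives \eqref{trueboundv}, and $V''=-V'^3/E$ gives \eqref{trueboundvprimeprime}.

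\textbf{Main obstacle.} The delicate point is the sign/monotonicity argument establishing that $W(x)=V(x)V'(x)$ actually lies in the band $[E_*(x),E^*(x)]$ rather than merely satisfying a differential inequality — one must correctly identify the boundary value of $W$ at the endpoint from which the comparison is run ($x\to\infty$ giving $W\to0$, which sits at the bottom of the band and is harmless, versus $x\to0$ which is exactly where \eqref{infinitederivative} is needed to get the lower bound $E_*(x)\le W(x)$). Getting the direction of the comparison and the role of the two hypotheses straight — which inequalities survive without \eqref{infinitederivative} (namely \eqref{aprioribound1}) and which genuinely need it — is the crux; the rest is careful but routine calculus with the explicit identity $\frac{d}{dx}\big(V'(x)^{-2}\big)=2/E(x)$.
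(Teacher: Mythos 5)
Your key identity $\frac{d}{dx}\big(V'(x)^{-2}\big)=2/E(x)$ is correct, and it is essentially the paper's own argument written in the forward variable (the paper passes to the inverse function $X$ of $-V$, notes $X''(u)=E(X(u))^{-1}$, and integrates twice from $u=0$ using $X(0)=0$, $X'(0)=-V'(0)^{-1}$); the one step you actually execute --- integrating this identity from $0$ under \eqref{infinitederivative} --- is fine and even yields a slightly stronger form of \eqref{trueboundvprime}. The genuine gap is that \eqref{trueboundvvprime} and the upper half of \eqref{aprioribound1}, which you make the keystone of your plan, are never proved. The comparison argument for $W=VV'$ that you sketch (and yourself flag as the crux) has its signs garbled --- the correct relation is $W'=V'^2\,(1-W/E)$ --- and its anchoring is wrong: at a point $x_0$ with $W(x_0)>E^*(x_0)\ge E(x_0)$ one gets $W'(x_0)<0$, i.e.\ $W$ \emph{decreases to the right}, which is perfectly compatible with $W\to0$ as $x\to\infty$, so ``running the argument from the right'' produces no contradiction; the violation propagates to the left, toward $x=0$, where you would need to know the boundary value of $W$ --- exactly the circularity you identify and do not resolve. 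Note also that you never invoke $V(0)=0$, the only quantitative consequence of $\vct{v}\in\cV_1$, without which the lemma is false (for defected $\vct{v}$ one has $-V(0_+)=\theta>0$ and \eqref{trueboundv} fails). A secondary defect: deducing \eqref{trueboundv} by dividing $VV'\le E^*$ by your lower bound on $-V'$ gives at best $-V\le 2^{1/2}E^*(x)E_*(x)^{-1/2}x^{1/2}$, weaker than the stated bound by the factor $\big(E^*(x)/E_*(x)\big)^{1/2}$, so the constants do not come out ``exactly as stated'' along that route.

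Both gaps close if you use your identity the way the paper uses its inverse-function version, with no comparison or Gr\"onwall step at all. Integrate $(V'^{-2})'=2/E$ over $[\varepsilon,x]$ and drop the nonnegative term $V'(\varepsilon)^{-2}$: this gives $-V'(x)\le 2^{-1/2}E^*(x)^{1/2}x^{-1/2}$ with no assumption on $V'(0)$, and under \eqref{infinitederivative} also $-V'(x)\ge 2^{-1/2}E_*(x)^{1/2}x^{-1/2}$. Then integrate these pointwise bounds on $-V'(y)$ over $(0,x]$, using $V(0)=0$ and the monotonicity $E^*(y)\le E^*(x)$, $E_*(y)\ge E_*(x)$ for $y\le x$, to obtain \eqref{trueboundv} with exactly the stated constants. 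Multiplying the $V$ and $V'$ bounds yields \eqref{trueboundvvprime}; in the unrestricted case the same multiplication of the two upper bounds gives \eqref{aprioribound1}, whose strict positivity is indeed immediate from $V(x)<0$, $V'(x)<0$. Finally $V''=-V'^3/E$ converts the $V'$ bounds into the bound on $V''$. In short: your machinery suffices, but the ordering you propose (establish the $VV'$ band first by a monotonicity argument, then deduce the rest) is the one part that does not work as described.
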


\begin{proof}
Since $\vct{v}\in\cV_1$ we have $V(0)=0$.
Denote the inverse function of $-V(x)$ by $X(u)$: $X(-V(x))=x$.
Note that
\begin{equation}
\label{inverse_fun_second_deri}
E(x)=\frac{1}{X^{\prime\prime}(-V(x))},
\end{equation}
and thus
\[
X(0)=0,
\qquad
X^{\prime}(0)=-V^{\prime}(0)^{-1},
\qquad
X^{\prime\prime}(u)=E(X(u))^{-1}.
\]
It follows that for $u\in[0,-V(x)]$:
\begin{eqnarray*}
-V^{\prime}(0)^{-1} +  E^*(x)^{-1}u
\le & X^{\prime}(u) & \le
-V^{\prime}(0)^{-1} + E_*(x)^{-1}u,
\\[8pt]
- V^{\prime}(0)^{-1} u +  E^*(x)^{-1} \frac{u^2}{2}
\le & X(u) & \le
- V^{\prime}(0)^{-1} u +  E_*(x)^{-1} \frac{u^2}{2}.
\end{eqnarray*}
Hence, all the bounds of the Lemma follow directly.
\end{proof}

\subsection{Bounds on $E$}
\label{subsection:bounds_on_E}

We assume given a solution of the \emph{integrated Burgers control
problem}: \eqref{burgers2},
\eqref{burgers_boundary_ineq},  \eqref{burinicond2}  with a control function $r(\cdot)$ satisfying
\eqref{papri}.

We fix
$\overline{t}\in(0,\infty)$, $\overline{x}\in(0,\infty)$. All
estimates will be valid uniformly in the domain
$(t,x)\in[0,\overline{t}]\times  [0,\overline{x}]$. The various
constants appearing in the forthcoming estimates will depend only on
the initial conditions $V(0,x)$ and on the
choice of $(\overline{t},\overline{x})$. The notation
\[
A(t,x)\asymp B(t,x)
\]
means that there exists a constant $1< C<\infty$ which depends
only on the initial conditions \eqref{burinicond2} and the choice
of $(\overline{t},\overline{x})$, such that for any
$(t,x)\in[0,\overline{t}]\times[0,\overline{x}]$
\begin{equation}
\label{alulfelul}
 C^{-1}
B(t,x)\le A(t,x)\le C B(t,x).
\end{equation}
The notation
$A(t,x)=\Ordo(B(t,x))$ means that the upper bound of
\eqref{alulfelul} holds.

In the sequel we denote the derivative of functions $f(t,x)$
 with respect to the time and space variables by $\dot f (t,x)$ and $f'(t,x)$, respectively.

First we define the
\emph{characteristics} given a solution of \eqref{burgers2},
\eqref{burinicond2}, \eqref{burgers_boundary_ineq}: for $t\ge0, x>0$ let $[0,t]\ni s\mapsto
\xi_{t,x}(s)$ be the \emph{unique solution} of the integral equation
\begin{equation}
\label{charaint}
\xi_{t,x}(s)=x-V(t,x)(t-s)+\int_s^t(u-s)e^{-\xi_{t,x}(u)}d r(u).
\end{equation}
Existence and uniqueness of the solution of \eqref{charaint}
follow from a simple fixed point argument. Now we prove that (given
$(t,x)$ fixed) $s\mapsto\xi_{t,x}(s)$ is also solution of the
initial value problem
\begin{equation}
\label{charadiff}
\frac{d}{ds}  \xi_{t,x}(s)=: \dot \xi_{t,x}(s)=V(s,\xi_{t,x}(s)), \qquad
\xi_{t,x}(t)=x.
\end{equation}
In order to prove this we define $\rbV(\tau,x)$ by \eqref{V_tau_explicit_def}. Thus
from \eqref{V_tau_noteq_alpha_zero} it follows that that the solution of \eqref{charadiff} satisfies
\begin{equation}\label{charadiff_tau_1}
\frac{d}{d\tau} \xi_{t,x}(t(\tau))= V( t(\tau), \xi_{t,x}(t(\tau))) \alpha(\tau)=
\rbV(\tau, \xi_{t,x}(t(\tau))) \alpha(\tau)
\end{equation}
From this and \eqref{burgers_tau} we get that
\begin{equation*}
\frac{d}{d\tau} \rbV(\tau,\xi_{t,x}(t(\tau)))= \dot{\rbV}(\tau,\xi_{t,x}(t(\tau)))
+\rbV' (\tau,\xi_{t,x}(t(\tau)))\cdot \frac{d}{d\tau} \xi_{t,x}(t(\tau))=
(1-\alpha(\tau))e^{-\xi_{t,x}(t(\tau))}
\end{equation*}
Integrating this and using $\xi_{t,x}(t)=x$ and \eqref{V_tau_V_t_identity} we get for all $\tau_1 \leq t+r(t)$
\begin{equation*}
\rbV(\tau_1, \xi_{t,x}(t(\tau_1)))=V(t,x)-
\int_{\tau_1}^{t+r(t)} (1-\alpha(\tau))e^{-\xi_{t,x}(t(\tau))}\, d \tau
\end{equation*}
Substituting this into the r.h.s. of \eqref{charadiff_tau_1}, integrating and using \eqref{alpha_density_radon} we get for all $\tau_2 \leq t +r(t)$
\begin{equation*}
\xi_{t,x}(t(\tau_2))=x-V(t,x)(t-t(\tau_2))+
\int_{\tau_2}^{t+r(t)} (t(\tau)-t(\tau_2))e^{-\xi_{t,x}(t(\tau))}(1-\alpha(\tau))\, d \tau
\end{equation*}
Now \eqref{charaint} follows from this by substituting $\tau_2=s+r(s)$ and using \eqref{change_of_variables_tau}.

We  define (similarly to \eqref{E_def_no_time})
\begin{align*}
& E(t,x)
:=
-
\frac{\px V(t,x)^3}{\px^2 V(t,x)},\qquad
& E^*(t,x):=\sup_{0< y \le x}E(t,y),\qquad
& E_*(t,x):=\inf_{0< y \le x}E(t,y),\\
& \rbE(\tau,x)
:=
-
\frac{\px \rbV(\tau,x)^3}{\px^2 \rbV(\tau,x)},\qquad
& \rbE^*(\tau,x):=\sup_{0< y \le x}\rbE(\tau,y),\qquad
& \rbE_*(\tau,x):=\inf_{0< y \le x}\rbE(\tau,y).\\
\end{align*}

Differentiating \eqref{burgers_tau} with respect to $x$ we get
\begin{align}\label{V_tau_first_derivative}
\dot{\rbV}'(\tau,x)&=-\rbV'(\tau,x)^2 \alpha(\tau)-
\rbV(\tau,x)\rbV''(\tau,x) \alpha(\tau)-(1-\alpha(\tau))e^{-x}\\
\dot{\rbV}''(\tau,x)&=-3\rbV'(\tau,x)\rbV''(\tau,x)\alpha(\tau)
-\rbV(\tau,x)\rbV'''(\tau,x)\alpha(\tau) +(1-\alpha(\tau))e^{-x}
\end{align}
Using this and \eqref{charadiff_tau_1} we obtain
\begin{equation}
\label{etx_tau_differential}
\frac{d}{d\tau} \rbE(\tau,\xi_{t,x}(t(\tau)))
=
\left( 3
\frac{\rbV^{\prime}(\tau,\xi_{t,x}(t(\tau)))^2}
{\rbV^{\prime\prime}(\tau,\xi_{t,x}(t(\tau)))}
+
\frac{\rbV^{\prime}(\tau,\xi_{t,x}(t(\tau)))^3}
{\rbV^{\prime\prime}(\tau,\xi_{t,x}(t(\tau)))^2}
\right) e^{-\xi_{t,x}(t(\tau))}\, (1-\alpha(\tau)) d\tau
\end{equation}

\begin{lemma}
\label{lemma:E}
If $m_2(0)=\sum_{k=1}^{\infty} k^2 \cdot v_k(0)  <+\infty$, then for any solution of the
integrated Burgers control problem \eqref{burgers2},
\eqref{burinicond2}, \eqref{burgers_boundary_ineq} with a control
function satisfying \eqref{papri} and for
$(t,x)\in[0,\overline{t}]\times(0,\overline{x}]$ we have
\begin{equation}
\label{Ebound}
E(t,x)
\asymp 1
\end{equation}
\end{lemma}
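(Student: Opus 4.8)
The plan is to track the quantity $E$ along characteristics and show it is trapped between two positive constants. The starting point is formula \eqref{etx_tau_differential}, which expresses the $\tau$-derivative of $\rbE(\tau,\xi_{t,x}(t(\tau)))$ along a characteristic in terms of $\rbV'$, $\rbV''$ and $e^{-\xi}$, all weighted by the factor $(1-\alpha(\tau))\ge 0$. The key observation is that the right-hand side is a function of the current value of $\rbE$ and $\xi$ only: using the elementary bounds of Lemma \ref{lemma:calculus} (valid since $\vct{v}(t)\in\cV_1$, so $V(t,0)=0$, and since $V^\prime(t,0)=-\infty$ can be arranged or handled separately), one rewrites $\rbV^{\prime 2}/\rbV^{\prime\prime}$ and $\rbV^{\prime 3}/\rbV^{\prime\prime 2}$ in terms of $E$, $E^*$, $E_*$ evaluated at $(\tau,\xi)$, obtaining a differential inequality of the form
\[
\frac{d}{d\tau}\rbE(\tau,\xi_{t,x}(t(\tau))) \;\le\; C_1\,(1-\alpha(\tau))\,e^{-\xi_{t,x}(t(\tau))}\,\Phi\big(\rbE^*,\rbE_*\big)
\]
and a matching lower bound, where $\Phi$ is a fixed rational function. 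Because $\xi_{t,x}(t(\tau))$ stays in a compact $x$-interval and $\tau$ ranges over a compact interval (by \eqref{alpha_density_radon} and \eqref{papri}), the total "driving mass" $\int (1-\alpha(\tau))\,d\tau \le r(\ot)\le 1+\ot/2$ is bounded, so the growth of $\rbE$ along any characteristic over the relevant time span is controlled by a Gronwall-type argument.

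The execution would proceed in steps. First I would establish that at $\tau=0$ the function $E(0,x)\asymp 1$ on $(0,\ox]$ from the hypothesis $m_2(0)<\infty$ together with the definition \eqref{E_def_no_time} and the expansion of $V_0$ near $x=0$ — the finite second moment forces $V_0^{\prime\prime}$ to be bounded, while $V_0^\prime(0)$ controls the lower behavior, giving two-sided bounds on $-V_0^{\prime 3}/V_0^{\prime\prime}$. Second, I would use the characteristic ODE \eqref{charadiff} (equivalently \eqref{charaint}) to show that every point $(t,x)$ with $x$ bounded is connected back to time $0$ by a characteristic whose spatial coordinate remains in a fixed compact set $[x_{\min},x_{\max}]\subset(0,\infty)$; this requires a lower bound $\xi_{t,x}(s)\ge c>0$, which follows because $V\le 0$ makes $s\mapsto\xi(s)$ increasing backward in a controlled way, and an upper bound from \eqref{charaint}. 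Third, with the characteristic confined to this compact box, $e^{-\xi}$ is bounded above and below, so the integrated version of \eqref{etx_tau_differential} plus the Gronwall argument yields $\rbE(\tau,\xi)\asymp 1$ uniformly, and finally \eqref{V_tau_V_t_identity} transfers this back to $E(t,x)\asymp 1$.

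The main obstacle will be the behavior near the spatial boundary $x=0$: the bounds in \eqref{trivibounds} and in Lemma \ref{lemma:calculus} blow up as $x\to 0$, and one needs the lower bound $\xi_{t,x}(s)\ge c>0$ along the whole characteristic to stay away from this singular region. Proving this lower bound is delicate because the source term $\int_s^t(u-s)e^{-\xi_{t,x}(u)}\,dr(u)$ in \eqref{charaint} pushes $\xi$ up but could a priori be negligible, and one must rule out a characteristic that dives toward $0$. I expect this to be handled by a bootstrap: assume $\xi_{t,x}(s)$ dips below a threshold, use Lemma \ref{lemma:calculus} to bound $-V(s,\cdot)$ from below by $\asymp x^{1/2}$, and show this forces the characteristic to turn around before reaching $0$; once the confinement is secured, the remaining estimates are routine applications of Gronwall and the calculus lemma. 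A secondary technical point is that $\alpha(\tau)$ may vanish on intervals (when $r$ jumps), but since the driving term in \eqref{etx_tau_differential} carries the factor $(1-\alpha(\tau))$ precisely, these intervals are exactly where $\rbE$ evolves, and the bookkeeping is consistent.
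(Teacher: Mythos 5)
Your overall skeleton (bound $E(0,\cdot)$ using $m_2(0)<\infty$, then propagate along the characteristics of \eqref{etx_tau_differential}) is the right one, but the two steps on which your execution hinges do not work. First, you invoke Lemma \ref{lemma:calculus} at positive times ``since $\vct{v}(t)\in\cV_1$''. That is not a hypothesis of the lemma: the integrated Burgers control problem only imposes \eqref{burgers_boundary_ineq}, i.e.\ $-1\le V(t,0)\le 0$, and the lemma must cover possibly defective solutions --- indeed it is later used (through Lemma \ref{giant_growth rate} and Proposition \ref{propo:conservative}) precisely to \emph{prove} conservativeness, and the paper explicitly notes there that bounds requiring $V(t,0)=0$ are unavailable; moreover $V^{\prime}(t,0)=-\infty$ certainly fails for $t<T_{\text{gel}}$. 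Second, and independently, your plan to confine the characteristic to a fixed compact set $[x_{\min},x_{\max}]\subset(0,\infty)$ and run a Gronwall argument there cannot give \eqref{Ebound} on all of $(0,\overline{x}]$: the characteristic through $(t,x)$ terminates at spatial coordinate $\xi_{t,x}(t)=x$, so for small $x$ it necessarily enters the region you are trying to avoid, and constants produced this way degenerate as $x\to0$ --- exactly the regime in which the lemma is later used (e.g.\ in \eqref{pint} and \eqref{phiisvvprime}). Also note there is no ``diving toward $0$'' problem to bootstrap away: since $\dot\xi_{t,x}(s)=V(s,\xi_{t,x}(s))\in(-1,0]$, going backward from $t$ one has immediately $x\le\xi_{t,x}(s)\le x+t$; characteristics cannot ``turn around''.

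The missing idea is that no Gronwall argument (and no control of the driving term by $E$, $E^*$, $E_*$) is needed, because the bracket in \eqref{etx_tau_differential} is bounded by an absolute constant uniformly in $x>0$ and independently of the size of $E$. Writing it as
\[
3\frac{V^{\prime}(x)^2}{V^{\prime\prime}(x)}+\frac{V^{\prime}(x)^3}{V^{\prime\prime}(x)^2}
=\frac{V^{\prime}(x)^2}{V^{\prime\prime}(x)^2}\bigl(3V^{\prime\prime}(x)+V^{\prime}(x)\bigr),
\]
nonnegativity follows from $3V^{\prime\prime}(x)+V^{\prime}(x)=\sum_{k\ge1}(3k^2-k)v_ke^{-kx}>0$, and the upper bound $\le 3$ follows from Cauchy--Schwarz, $V^{\prime}(x)^2\le V^{\prime\prime}(x)\sum_k v_ke^{-kx}\le V^{\prime\prime}(x)$ --- valid for any sub-probability vector, with no blow-up as $x\to0$ and no conservativeness assumption. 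Together with $0\le e^{-\xi}(1-\alpha(\tau))\le1$ this gives $0\le\frac{d}{d\tau}\rbE(\tau,\xi_{t,x}(t(\tau)))\le3$, so integrating (the total driving mass along the characteristic being $r(t)\le 1+t/2$ by \eqref{papri} and \eqref{change_of_variables_tau}) and using $x\le\xi_{t,x}(0)\le x+t$ yields
\[
0<E_*(0,\overline{x}+\overline{t})\le E(t,x)\le E^*(0,\overline{x}+\overline{t})+3(\overline{t}/2+1)<\infty ,
\]
where the finiteness and positivity of the endpoints at time $0$ come from $m_2(0)<\infty$ as in your first step. Without this pointwise bound (or a substitute for it), your differential inequality does not close and your constants are not uniform near $x=0$, so the proof as proposed is incomplete.
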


\begin{proof}
$E(0,x)=\rbE(0,x) \asymp 1$ follows from $m_2(0)<+\infty$. For $t \geq 0$ we
use the formula \eqref{etx_tau_differential} to show that
$0 \leq \frac{d}{d\tau} \rbE(\tau,\xi_{t,x}(t(\tau)))\leq 3$. Since
$0 \leq e^{-\xi_{t,x}(t(\tau))} (1-\alpha(\tau)) \leq 1$ by
 \eqref{alpha_density_radon} we only need to show
\begin{equation}
\label{etxbound}
0
\leq
\frac{V^{\prime}(x)^2}{V^{\prime\prime}(x)^2}
(3V^{\prime\prime}(x) +  V^{\prime}(x))
=
3 \frac{V^{\prime}(x)^2}{V^{\prime\prime}(x)} +
\frac{V^{\prime}(x)^3}{V^{\prime\prime}(x)^2}
\le
3\frac{V^{\prime}(x)^2}{V^{\prime\prime}(x)}
\le
3.
\end{equation}

The lower bound follows from
$
3V^{\prime\prime}(x) + V^{\prime}(x)
=
\sum_{k=1}^\infty (3k^2-k)v_k e^{-kx}>0.
$

The upper bound follows from Schwarz's inequality:
\[
\frac{V^{\prime}(x)^2}{V^{\prime\prime}(x)}=
\frac{\left( \sum_{k=1}^{\infty} k\cdot v_k e^{-kx} \right)^2}
{\sum_{k=1}^{\infty}k^2 \cdot v_k e^{-kx} } \leq \sum_{k=1}^{\infty} v_k e^{-kx} \leq m_0 \leq 1.
\]

Integrating \eqref{etx_tau_differential},
 using $0 \leq \frac{d}{d\tau} \rbE(\tau,\xi_{t,x}(t(\tau)))\leq 3$, \eqref{V_tau_V_t_identity},
 \eqref{change_of_variables_tau} and the last inequality in
\eqref{papri} we obtain
\begin{equation*}
\label{E}
E(0,\xi_{t,x}(0))
\le
E(t,x)
\le
E(0,\xi_{t,x}(0))
+
3(t/2+1).
\end{equation*}
Next we observe that $ x\le\xi_{t,x}(0)\le x+t $ by
\eqref{charadiff} and $-1 < V(t,x) \leq 0$.

The last two bounds yield for
$(t,x)\in[0,\overline{t}]\times(0,\overline{x}]$
\[
0
<
E_*(0,\overline{x}+\overline{t})
\le
E(t,x)
\le
E^*(0,\overline{x}+\overline{t})+3(\overline{t}/2 +1)
<\infty.
\]
\end{proof}

\begin{lemma}\label{giant_growth rate}
If $m_2(0)<+\infty$, then for any solution of the integrated
Burgers control problem \eqref{burgers2},
\eqref{burgers_boundary_ineq}, \eqref{burinicond2} with a control
function satisfying \eqref{papri} there is a constant $C^*$ which depends only on the
initial conditions and $T$  such
that for $T_{\text{gel}} \leq t_1 \leq t_2 \leq T$ we have
\begin{equation}
\label{giant_lipchitz} \theta(t_2)-\theta(t_1) \leq C^*
\cdot(t_2-t_1)
\end{equation}
\end{lemma}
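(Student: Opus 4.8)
The quantity $\theta(t)$ measures the mass in the giant component, which — via the generating function — is $\theta(t) = -V(t,0)$ (since $V(t,0) = \sum_k v_k(t) - 1 = -\theta(t)$). So the statement \eqref{giant_lipchitz} is equivalent to a local Lipschitz bound on $t \mapsto V(t,0)$, or rather an upper bound on its increments. The idea is to track $V(t,x)$ along a characteristic that reaches the boundary $x=0$ at time $t_2$, and compare with the value at the boundary at time $t_1$.

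\medskip

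\noindent\textbf{Step 1: Reduce to the boundary value of $V$.} Since $\vct v(t) \in \cV_1$ is not assumed, write $\theta(t) = 1 - \sum_k v_k(t) = -V(t,0)$, so $\theta(t_2) - \theta(t_1) = V(t_1,0) - V(t_2,0)$. It therefore suffices to show $V(t_1,0) - V(t_2,0) \le C^*(t_2-t_1)$ for $T_{\text{gel}} \le t_1 \le t_2 \le T$.

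\medskip

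\noindent\textbf{Step 2: Use the characteristic ending at $(t_2,0)$.} Although the characteristics were defined in \eqref{charaint}–\eqref{charadiff} for $x>0$, one can take $x \downarrow 0$ and obtain the characteristic $s \mapsto \xi_{t_2,0}(s)$ with $\xi_{t_2,0}(t_2) = 0$; by \eqref{charadiff} it satisfies $\dot\xi_{t_2,0}(s) = V(s,\xi_{t_2,0}(s))$, and since $-1 \le V \le 0$ on the relevant strip, $\xi_{t_2,0}(s) \ge 0$ is decreasing in $s$ on $[0,t_2]$ with $0 \le \xi_{t_2,0}(t_1) \le t_2 - t_1$. The plan is to evaluate the integrated Burgers equation \eqref{burgers2} along this curve, or more cleanly to pass to the $\tau$-variable and use that along $\tau \mapsto \rbV(\tau,\xi_{t_2,0}(t(\tau)))$ the derivative equals $(1-\alpha(\tau))e^{-\xi_{t_2,0}(t(\tau))} \le 1-\alpha(\tau)$ (this is exactly the computation done just before Lemma \ref{lemma:E}). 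Integrating from the time parameter corresponding to $t_1$ up to $t_2 + r(t_2)$ gives
\[
V(t_2,0) - \rbV\big(\tau_1, \xi_{t_2,0}(t(\tau_1))\big) = \int_{\tau_1}^{t_2+r(t_2)} (1-\alpha(\tau)) e^{-\xi_{t_2,0}(t(\tau))}\, d\tau \ge 0,
\]
where $\tau_1$ corresponds to $t_1$ (say $\tau_1 = t_1 + r(t_1)$, using \eqref{V_tau_V_t_identity}), so that $\rbV(\tau_1,\cdot) = V(t_1,\cdot)$. Hence $V(t_1, \xi_{t_2,0}(t_1)) \le V(t_2,0)$, i.e.
\[
\theta(t_2) - \theta(t_1) = V(t_1,0) - V(t_2,0) \le V(t_1,0) - V(t_1,\xi_{t_2,0}(t_1)) = -\int_0^{\xi_{t_2,0}(t_1)} V'(t_1,x)\, dx.
\]

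\medskip

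\noindent\textbf{Step 3: Control the spatial gradient of $V$ near the boundary.} Now I invoke the bounds of Lemma \ref{lemma:calculus} and Lemma \ref{lemma:E}. By Lemma \ref{lemma:E}, $E(t_1,x) \asymp 1$ on $[0,\bar t]\times(0,\bar x]$; combined with \eqref{trueboundvprime} of Lemma \ref{lemma:calculus} (which applies once $V'(t_1,0^+) = -\infty$, the critical regime $t_1 \ge T_{\text{gel}}$), we get $-V'(t_1,x) = \Ordo(x^{-1/2})$ uniformly. Therefore
\[
\theta(t_2)-\theta(t_1) \le \int_0^{\xi_{t_2,0}(t_1)} |V'(t_1,x)|\, dx \le \Ordo(1) \int_0^{t_2-t_1} x^{-1/2}\, dx = \Ordo\!\big((t_2-t_1)^{1/2}\big),
\]
using $\xi_{t_2,0}(t_1) \le t_2 - t_1$. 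This already gives Hölder-$1/2$; to upgrade to Lipschitz, observe that $V(t_1,0) = 0$ whenever $\vct v(t_1) \in \cV_1$ — but we are not assuming conservativity here, so instead I note $|V(t_1,0) - V(t_1,y)| = |{-}\int_0^y V'(t_1,x)dx|$ and one actually wants a sharper estimate: one should instead bound $-V'(t_1,x) \le -V'(t_1, \xi_{t_2,0}(t_1)) = \Ordo((t_2-t_1)^{-1/2})$ is still only Hölder. The genuinely Lipschitz bound instead comes from: $\theta(t_2)-\theta(t_1) \le -V(t_1,\xi) + V(t_1,0)$ with $\xi := \xi_{t_2,0}(t_1) \le t_2-t_1$, and by \eqref{trueboundv}, $-V(t_1,\xi) = \Ordo(\xi^{1/2})$ as well; so one must work harder. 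The correct route: differentiate and show $\frac{d}{ds}V(s, \xi_{t_2,0}(s))\big|$ stays bounded — but $V$ along the characteristic changes only via the source term $e^{-x}$, already handled. The clean Lipschitz estimate is obtained by noting $\theta(t) = -V(t,0)$ and the characteristic starting point satisfies $\xi_{t_2,0}(t_1) \le t_2-t_1$, then using that $\rbV$ at $\tau$-time, traced back one more step to $x=0$, picks up only the source: precisely, compare $\xi_{t_2,0}$ with $\xi_{t_1,0}$ and use that both hit the boundary, so the difference $|V(t_1,0) - V(t_1,\xi_{t_2,0}(t_1))|$ is $\Ordo(\xi_{t_2,0}(t_1)) = \Ordo(t_2-t_1)$ provided $|V'(t_1,x)|$ is bounded on $[0,\xi_{t_2,0}(t_1)]$ by a constant — which fails at $x=0$. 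The resolution is that $\xi_{t_2,0}(t_1)>0$ strictly and in fact, by running the characteristic argument with $\xi_{t_1,0}$ instead, $V(t_1,0) = \rbV(\cdot) + (\text{nonneg source})$, so $V(t_1,0) \le$ its value further back, and the source term over a $\tau$-interval of length $\Ordo(t_2-t_1)$ contributes $\Ordo(t_2-t_1)$.

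\medskip

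\noindent\textbf{Main obstacle.} The delicate point is getting the genuine Lipschitz bound rather than merely Hölder-$1/2$: the naive estimate $\int_0^{t_2-t_1} x^{-1/2}dx$ only yields the square-root rate. The fix is \emph{not} to integrate $V'$ in $x$, but to compare the two characteristics $\xi_{t_1,0}$ and $\xi_{t_2,0}$ directly — both terminate at the boundary, $\theta(t_i) = -V(t_i,0) = \lim_{s\to 0}(\text{value at }\xi_{t_i,0})$ plus an accumulated source $\int (1-\alpha)e^{-\xi}$ which over a time-window of size $t_2-t_1$ is $\Ordo(t_2-t_1)$ — and to bound the difference of the two backward curves at the common earlier time using the Lipschitz-in-data dependence of the ODE \eqref{charadiff}, whose right-hand side $V$ is itself Lipschitz in $x$ away from $0$ and bounded everywhere. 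Thus the Lipschitz constant $C^*$ depends on the $\Ordo(1)$ constants from Lemma \ref{lemma:E} (hence on $m_2(0)$, the initial data, and $T$), and on the source-accumulation rate $\le 1$ from \eqref{alpha_density_radon}. Assembling these gives $\theta(t_2)-\theta(t_1) \le C^*(t_2-t_1)$.
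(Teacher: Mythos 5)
You have not closed the argument, and you say so yourself: Step 3 yields only $\theta(t_2)-\theta(t_1)=\Ordo\big((t_2-t_1)^{1/2}\big)$, and the attempted upgrades to a genuine Lipschitz bound remain sketches that do not work as stated. The final proposal — compare the two backward characteristics $\xi_{t_1,0}$ and $\xi_{t_2,0}$ and invoke ``Lipschitz-in-data dependence'' of the ODE \eqref{charadiff} — fails precisely at the boundary: both curves terminate at $x=0$, where $-V'(t,x)\asymp x^{-1/2}$, so along the curves $|V'(s,\xi(s))|$ is of order $(t_i-s)^{-1}$ and the Gr\"onwall factor $\exp\int|V'|\,ds$ diverges; there is no usable Lipschitz constant for the vector field where you need it. Moreover, the bound ``accumulated source over a window of length $t_2-t_1$ is $\Ordo(t_2-t_1)$'' controls only the source contribution, not the transport term, i.e.\ not the spatial increment $V(t_1,0)-V(t_1,\xi_{t_2,0}(t_1))$, which is exactly where the $x^{1/2}$ (H\"older) behaviour enters. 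A further problem: in Step 3 you use \eqref{trueboundv}--\eqref{trueboundvprime} from Lemma \ref{lemma:calculus}, but those bounds require $\vct{v}(t_1)\in\cV_1$, i.e.\ $V(t_1,0)=0$; in this lemma conservativity is precisely what is \emph{not} assumed (only $-1\le V(t,0)\le 0$), since the statement is used to rule out a giant component. The paper explicitly notes that \eqref{aprioribound1} is unavailable for $t>0$ for this reason; only $V(0,0)=0$ may be used.

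The missing idea (the paper's route) is to prove the uniform bound $V(t,x)V'(t,x)\le C^*:=\max\{1,2C\}$ on $[0,T]\times(0,\overline{x}]$, with $C$ the constant from Lemma \ref{lemma:E}. One computes $\frac{d}{d\tau}\big(\rbV'\rbV(\tau,x)\big)$ from \eqref{burgers_tau} and \eqref{V_tau_first_derivative}, checks that the coagulation part is $\le 0$ once $\rbV'\rbV\ge 2C$ (using $E\le C$) and the source part is $\le 0$ once $\rbV'\rbV\ge 1$ (using $-1\le\rbV\le 0$), and concludes by a forbidden-region argument from the initial bound $V'V(0,x)\le C$ (where $V(0,0)=0$ does hold). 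Then, at fixed $x>0$, the integrated equation \eqref{burgers2} gives
\begin{equation*}
V(t_1,x)-V(t_2,x)\le\int_{t_1}^{t_2}V(s,x)V'(s,x)\,ds\le C^*(t_2-t_1),
\end{equation*}
and letting $x\to 0_+$ yields \eqref{giant_lipchitz}. Your Step 2 monotonicity-along-characteristics observation is correct but cannot substitute for this uniform bound on $VV'$; as it stands the proposal proves only H\"older-$1/2$ regularity of $\theta$.
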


\begin{proof}
$\theta(t)=-V(t,0_{+})$. Since $V(t,x)$ arises from \eqref{gfdef},
we assume $-1< V(t,x)\leq 0$, $V'(t,x)<0$
for all $x>0$.


 Let us pick an arbitrary
$\overline{x}>0$. Let $C$ be a constant such that $E(t,x) \leq C$
for $(t,x)\in[0,T]\times(0,\overline{x}]$.

 First we are going to show that \be
\label{V_times_Vprime_bound_if_defected} \forall\; 0\leq t \leq T
\, , 0<x\leq \overline{x} \qquad V'V(t,x):= V'(t,x)V(t,x) \leq C^*:=\max\{1,2
C\}
 \ee
Note that we cannot use \eqref{aprioribound1} here since that
bound uses $V(t,0)=0$. But $V(0,0)=0$ holds, thus
\eqref{V_times_Vprime_bound_if_defected} holds for $t=0$.
From \eqref{burgers_tau} and \eqref{V_tau_first_derivative} we get
\begin{multline*}
\frac{d}{d\tau} \left(\rbV'\rbV( \tau,x) \right) =\\
\left(-2 \rbV(\tau,x)\rbV'(\tau,x)^2-\rbV(\tau,x)^2\rbV''(\tau,x)\right)\alpha(\tau)+
\left( \rbV'(\tau,x)-\rbV(\tau,x) \right) e^{-x}(1-\alpha(\tau)) \leq \\
-\rbV(\tau,x)\rbV'(\tau,x)^2\left( 2- \frac{1}{C} \rbV'\rbV(\tau,x) \right) \alpha(\tau)+
\left( \rbV'(\tau,x)-\rbV(\tau,x) \right) e^{-x}(1-\alpha(\tau))
\end{multline*}
From \eqref{burgers_boundary_ineq_tau} we get
\[ \rbV'\rbV(\tau,x) \geq 1 \quad \implies \quad
\rbV'(\tau,x) \leq \frac{1}{\rbV(\tau,x)} \leq -1 \leq \rbV(\tau,x)\]
Thus by \eqref{alpha_density_radon} we get
\begin{align*}
\rbV' \rbV (\tau,x) \geq 1 \quad &\implies \quad \left( \rbV'(\tau,x)-\rbV(\tau,x) \right) e^{-x}(1-\alpha(\tau)) \leq 0\\
\rbV' \rbV (\tau,x) \geq 2C \quad &\implies \quad -\rbV(\tau,x)\rbV'(\tau,x)^2\left( 2- \frac{1}{C} \rbV'\rbV(\tau,x) \right) \alpha(\tau) \leq 0 \\
\rbV' \rbV (\tau,x) \geq C^* \quad &\implies \quad \frac{d}{d\tau} \left(\rbV'\rbV( \tau,x) \right) \leq 0
\end{align*}
From $\rbV'\rbV(0,x) \leq C^*$ and the last differential
inequality it easily follows by a ``forbidden region''-argument
that $\rbV'\rbV(\tau,x)\leq C^*$ for all
 $0<x<\overline{x}$ and $0\leq \tau \leq T+r(T)$.
 This and \eqref{V_tau_V_t_identity} implies
 \eqref{V_times_Vprime_bound_if_defected}.

 By \eqref{burgers2} and \eqref{V_times_Vprime_bound_if_defected}
 we have
\[
V(t_1,x)-V(t_2,x) \leq \int_{t_1}^{t_2} V(s,x)V'(s,x)ds \leq C^*
\cdot(t_2-t_1)
\]
for every $0<x<\bar{x}$.
Letting $x \to 0_{+}$ implies the claim of the Lemma.
\end{proof}

\subsection{No giant component in the limit}
\label{subsection:nogiant}

The aim of this subsection is to prove the following proposition:
\begin{proposition}
\label{propo:conservative}
If $n^{-1} \ll \lambda(n) \ll 1$ and
$m_2(0)<+\infty$ holds for $\vct{v}(0)$ on the right-hand side of
\eqref{inilim}
 then any weak limit point $\P$  of the sequence
of probability measures $\P_{n}$ is concentrated on the set of
conservative forest fire evolutions:
\begin{equation}
\label{conserv}
\P
\big(\sum_{k=1}^\infty v_k(t) \equiv 1 \big)=1
\end{equation}
\end{proposition}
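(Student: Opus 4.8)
By Proposition~\ref{propo:weaklimit}, every weak limit point $\P$ of $\P_n$ is carried by FFE's generated by a FFF satisfying \eqref{smoluk>1}; hence (Subsection~\ref{subsection:integrated_burgers_control_problem}) $\vct v(\cdot)$ solves the controlled Smoluchowski equations \eqref{smoluk2} with control $r=r_\infty$ satisfying \eqref{papri}, and the associated generating functions solve the integrated Burgers control problem. Since $m_2(0)<\infty$, Lemma~\ref{giant_growth rate} gives $\theta(t_2)-\theta(t_1)\le C^*(t_2-t_1)$ for $T_{\text{gel}}\le t_1\le t_2\le T$ (and $\theta(t)=0$ for $t\le T_{\text{gel}}$, since there the flow is asymptotically the subcritical Erd\H os--R\'enyi flow). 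This one-sided Lipschitz bound means that a single value $\theta(t_0)>0$ forces $\theta\ge\theta(t_0)/2$ on a time interval of length $\ge\theta(t_0)/(2C^*)$ ending at $t_0$; therefore $\{\theta\not\equiv0\}$ and $\{\int_0^T\theta(t)\,dt>0\}$ coincide up to a $\P$-null set, and it suffices to prove
\[
\E_{\P}\!\Big[\int_0^T\theta(t)\,dt\Big]=0,\qquad\text{equivalently}\qquad \sum_{k\in\N}\E_{\P}\!\Big[\int_0^T v_k(t)\,dt\Big]=T .
\]
For each fixed $K$ the map $(\vct v,\vct q,\vct r)\mapsto\sum_{k\le K}\int_0^T v_k(t)\,dt$ is bounded and continuous on $\cD[0,T]$, so the left side above is the monotone limit of these continuous functionals; the bound ``$\le T$'' is automatic, and the excess is precisely the loss of mass to clusters of diverging size, which is what must be excluded.

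\textbf{The burnt-mass estimate.}
Applying \eqref{bounds_on_flow} to the pre-limit FFF gives $r_n(T)\le 2+q_n(T)$ almost surely, hence by \eqref{q_n_upper_bound} $\E_{\P_n}[r_n(T)]\le 2+T$. Since $r_{n,1}\equiv0$ and each $\wt r_{n,k}$ ($k\ge2$) is a mean-zero martingale, $\E_{\P_n}[r_{n,k}(T)]=\lambda(n)\,k\int_0^T\E_{\P_n}[v_{n,k}(s)]\,ds$, and summing over $k\ge2$,
\[
\lambda(n)\int_0^T\E_{\P_n}\big[m_{n,1}(s)-v_{n,1}(s)\big]\,ds=\E_{\P_n}[r_n(T)]\le 2+T ,
\]
so that $\int_0^T\E_{\P_n}[m_{n,1}(s)]\,ds\le T+(2+T)/\lambda(n)$. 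Because $n^{-1}\ll\lambda(n)$, this right-hand side is $\ordo(n)$, and for every $K$,
\[
\int_0^T\E_{\P_n}\!\Big[\sum_{k>K}v_{n,k}(s)\Big]\,ds\ \le\ \frac1K\int_0^T\E_{\P_n}[m_{n,1}(s)]\,ds\ \le\ \frac{T+(2+T)/\lambda(n)}{K}.
\]
Thus the pre-limit mass outside small clusters is, in a time-averaged mean, confined to clusters of size $\lesssim\lambda(n)^{-1}=\ordo(n)$: no macroscopic component can persist.

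\textbf{Passing to the limit (the hard part).}
This bound is not by itself sufficient: for fixed $K$ the right-hand side blows up as $n\to\infty$ (since $\lambda(n)\to0$), so one cannot directly conclude that the escaping mass $T-\sum_{k}\E_\P[\int_0^T v_k]$ vanishes. What is additionally needed is an Erd\H os--R\'enyi-type control of the cluster-size tail that is \emph{uniform in $n$} and small for large $K$, i.e.\ $\sup_n\E_{\P_n}\big[\int_0^T\sum_{k>K}v_{n,k}(s)\,ds\big]\le\psi(K)$ with $\psi(K)\to0$. This is a uniform-integrability statement, and with it the truncated masses $\sum_{k\le K}\int_0^T v_k$ pass to the limit ($n\to\infty$, then $K\to\infty$), giving $\sum_{k}\E_\P[\int_0^T v_k(t)\,dt]=T$; combined with the reduction this yields \eqref{conserv}, and then $\E_\P[\int_0^T\theta]=0$ together with the Lipschitz bound upgrades ``$\theta=0$ a.e.'' to ``$\theta\equiv0$'' $\P$-almost surely. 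The plan for the uniform tail bound is to feed the burnt-mass estimate of the previous step into the generating-function/characteristics analysis of Section~\ref{section:boundary_behaviour}: the fragmentation term makes the constrained Smoluchowski equations behave, as regards the second moment and the quantity $E$, at least as well as the critical Erd\H os--R\'enyi flow, so that $\sum_{l\ge k}v_{n,l}(t)=\Ordo(k^{-1/2})$ uniformly in $n\in\N$ and $t\in[0,T]$ --- the pre-limit analogue of \eqref{largekasymp}.

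\textbf{Main obstacle.}
The genuine difficulty is exactly this last step: turning ``no macroscopic cluster'' (which the burnt-mass bound gives cheaply) into ``no mass escapes to $k=\infty$ in the limit''. Mesoscopic clusters --- of size diverging but $\ordo(n)$ --- are not excluded by the burnt-mass bound alone, and ruling out that such clusters collectively carry a positive fraction of the mass requires the quantitative control of the shape of the cluster-size distribution (the estimates $E(t,x)\asymp1$, hence a critical $k^{-1/2}$ tail) from Section~\ref{section:boundary_behaviour}, together with a careful coupling of the truncation level $K$ with $n$. The remaining ingredients --- the martingale computations, the compactness via Lemma~\ref{lemma:compactness}, and the Lipschitz upgrade --- are routine.
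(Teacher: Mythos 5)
Your reduction and the moment identity $\E_{\P_n}\big[r_n(T)\big]=\lambda(n)\int_0^T\E\big[m_{n,1}(s)-v_{n,1}(s)\big]\,ds\le 2+T$ are correct, but the argument stops exactly where the real work begins, and the plan you sketch for the missing step does not go through. The uniform-in-$n$ tail bound $\sup_n\E_{\P_n}\big[\int_0^T\sum_{k>K}v_{n,k}(s)\,ds\big]\le\psi(K)$ with $\psi(K)\to0$ is essentially a restatement of the proposition itself, and you propose to extract it from the generating-function/characteristics estimates of Section~\ref{section:boundary_behaviour}. Those estimates, however, are proved for \emph{deterministic solutions} of the integrated Burgers control problem, and the sharp square-root tail (the lower bounds in Lemma~\ref{lemma:calculus}, Lemma~\ref{lemma:infinitederivative}, Proposition~\ref{propo:summary}) requires the boundary condition $V(t,0)\equiv 0$, i.e.\ precisely the conservativity you are trying to prove; without it only Lemma~\ref{lemma:E} and Lemma~\ref{giant_growth rate} survive, and they give no decay in $k$ at all. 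At finite $n$ one would in addition have to control the martingale fluctuations and the $1/n$ correction terms in the evolution of $V_n(t,x)$; the paper does this only along a single characteristic stopped at the spatial scale $x=n^{-1/3}$ (Lemma~\ref{lemma:almost_giant_or_lot_burnt}), not uniformly in $(t,x)$ as your claimed pre-limit analogue of \eqref{largekasymp} would require. Note also that a uniform $\Ordo(k^{-1/2})$ tail cannot hold pointwise in time at finite $n$: just before a fire the configuration contains a near-macroscopic cluster, which places an atom of positive mass at $k$ of order $n$ (or at mesoscopic scales), so any such bound could only hold in a time-averaged sense, and proving that is exactly the hard content of the proposition.

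The mechanism the paper actually uses, and which is absent from your proposal, is dynamical and exploits $n\lambda(n)\to\infty$ quantitatively, not merely through $(2+T)/\lambda(n)=\ordo(n)$. Lemma~\ref{lemma:disjoint_intervals_big_mass} converts a hypothetical failure of \eqref{conserv} into the statement that on each of $m$ disjoint subintervals of a fixed interval of length $\varepsilon_1$, with probability at least $\varepsilon_3$, at least $\varepsilon_2$ of the mass lies in clusters of size $\ge K$. Lemma~\ref{lemma:almost_giant_or_lot_burnt} (the characteristics argument for $W_n$, stopped at $x=n^{-1/3}$) shows that within time $\Ordo(1/(K\varepsilon_2))$ this mass either burns or coagulates into clusters of size at least $C_3\varepsilon_2 n^{1/3}$; Lemma~\ref{lemma:if_almost_giant_then_big_fire_soon} shows that such clusters merge to macroscopic size and burn within time $\Ordo\big(n^{-1/3}\log n+(n\lambda(n))^{-1}\big)=\ordo(1)$, so that each subinterval contributes expected burnt mass at least $\varepsilon_3\varepsilon_4$. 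Taking $m$ large enough and using the strong Markov property yields $\E_n\big[r_n(T)\big]>T+2$, contradicting the very bound you derived. Your one-shot use of $\E_n[r_n(T)]\le T+2$ via Markov's inequality on $m_{n,1}$ cannot be repaired on its own (as you observe, the bound degenerates as $\lambda(n)\to0$); what is needed is this iterated ``large clusters must burn within $\ordo(1)$ time, and each burn costs a fixed amount of $r$'' argument, which your proposal identifies as the obstacle but does not supply.
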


We are going to prove Proposition \ref{propo:conservative} by
contradiction: in Lemma \ref{lemma:disjoint_intervals_big_mass} we show that if $\theta(\cdot)\not \equiv 0$ in the limit, then there is a positive time interval  such that $\theta(t)$ has a positive lower bound, and that this implies that even in the convergent sequence of finite-volume models, a lot of mass is contained in arbitrarily big components on this interval. Than in subsequent Lemmas  we prove that these big components indeed burn, which produces such a big increase in the value of the burnt mass $r(\cdot)$ that is in contradiction with
$\expect{r(T)} \leq 2+\expect{q(T)} \leq 2+T$.

$ $

By Proposition \ref{propo:weaklimit} the random FFE obtained as a
weak limit point is almost deterministic: \eqref{smoluk2} holds
with a possibly random control function $r(\cdot)$.
Also, by \eqref{smoluk>1} we $\P$-almost surely have $q(t)\leq t$ from which \eqref{papri} follows.
Thus \eqref{Ebound} and
\eqref{giant_lipchitz} hold $\P$-almost surely for the random
flow obtained as a weak limit point with a deterministic constant $C^*$.

\begin{lemma}
\label{lemma:disjoint_intervals_big_mass}
If $\P_n \weak \P$ where $\P$ does not satisfy \eqref{conserv} on
$\lbrack 0, T \rbrack$, then there exist $\rbeps_1$, $\rbeps_2$,
$\rbeps_3>0$ and a deterministic $\spect\in[\rbeps_1,T]$
such that for every $K<+\infty$, every $m<+\infty$ and every
sequence
\[
\spect -\rbeps_1 <\alpha_1<\beta_1<\alpha_2<\beta_2
<\dots<\alpha_m<\beta_m < \spect
\]
there exists an $n_0<+\infty$ such that
for every $n\geq n_0$ and $1 \leq i \leq m$ we have
\begin{equation}
\label{disjoint_intervals_big_mass} \P_n \left(\max_{\alpha_i \leq
t \leq \beta_i} 1-\sum_{k=1}^{K-1} v_{n,k}(t)
>\rbeps_2 \right)
> \rbeps_3.
\end{equation}
\end{lemma}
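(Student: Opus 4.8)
The plan is to translate the failure of conservation, $\P(\theta(\cdot)\equiv 0)<1$, into a quantitative statement about a giant component appearing on a whole time interval, and then to reflect that back onto the finite-volume chains via the weak convergence $\P_n\weak\P$. First I would fix a realization of the limit flow for which $\theta(t_0)>0$ for some $t_0\in(0,T]$; by Proposition \ref{propo:weaklimit} the limit is $\P$-almost surely a solution of \eqref{smoluk2} with a deterministic control function $r(\cdot)$, so the set $\{\theta(\cdot)\not\equiv 0\}$ has positive probability only if there is a \emph{deterministic} time $\spect$ and deterministic thresholds at which $\theta$ is bounded below on a positive-probability event. The key structural input is Lemma \ref{giant_growth rate}: since $\theta$ can only jump \emph{downward} (the Smoluchowski dynamics between fires only increases $\theta$, while $r(\cdot)$ increasing pushes mass out of the giant component) and $t\mapsto\theta(t)$ increases at rate at most $C^*$, once $\theta$ is positive at some time it was already positive, and bounded away from zero, on a short interval \emph{just before} that time. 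Concretely, if $\theta(\spect)\ge 2\rbeps_2$ then by \eqref{giant_lipchitz} we get $\theta(t)\ge \rbeps_2$ for all $t\in[\spect-\rbeps_1,\spect]$ with $\rbeps_1:=\rbeps_2/C^*$. This gives $\rbeps_1,\rbeps_2$ and the deterministic $\spect$, and the event on which this holds has probability $\geq$ some $\rbeps_3'>0$.

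Next I would pass from $\theta(t)\ge\rbeps_2$ to the statement about $v_{n,k}$. Note $\theta(t)=1-\sum_{k\in\N}v_k(t)$, while the quantity in \eqref{disjoint_intervals_big_mass} is $1-\sum_{k=1}^{K-1}v_{n,k}(t)$, i.e. the mass in the limit-giant-component \emph{plus} the mass in finite components of size $\ge K$. So it suffices to show: on the good event, for the limiting flow $\sum_{k\ge K}v_k(t)+\theta(t)\ge \rbeps_2$ on $[\spect-\rbeps_1,\spect]$ (which is immediate from $\theta(t)\ge\rbeps_2$, as $\sum_{k\ge K}v_k(t)\ge0$), and then transfer this to $\P_n$ for $n$ large. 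For the transfer, fix the finitely many sub-intervals $[\alpha_i,\beta_i]\subset(\spect-\rbeps_1,\spect)$. The map $\big(\vct v,\vct q(\cdot),\vct r(\cdot)\big)\mapsto \max_{\alpha_i\le t\le\beta_i}\big(1-\sum_{k=1}^{K-1}v_k(t)\big)$ is, on the space $\cD[0,T]$, a functional that is lower semicontinuous at points where the relevant $v_k(\cdot)$ are continuous at $\alpha_i,\beta_i$ — and since each $v_k(\cdot)$ has bounded variation, its discontinuities are countable, so by perturbing $\alpha_i,\beta_i$ (or rather, by the standard Portmanteau argument for weak convergence of measures, using that the limit $\P$ puts no mass on the exceptional null-sets) we get
\[
\liminf_{n\to\infty}\P_n\Big(\max_{\alpha_i\le t\le\beta_i}\big(1-\textstyle\sum_{k=1}^{K-1}v_{n,k}(t)\big)>\rbeps_2\Big)
\;\ge\;
\P\Big(\max_{\alpha_i\le t\le\beta_i}\big(1-\textstyle\sum_{k=1}^{K-1}v_k(t)\big)>\rbeps_2\Big)\;\ge\;\rbeps_3,
\]
with $\rbeps_3<\rbeps_3'$ chosen to absorb the approximation. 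This yields the existence of $n_0$ uniformly over the finitely many indices $i$, which is exactly \eqref{disjoint_intervals_big_mass}.

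The main obstacle, I expect, is the semicontinuity/measurability bookkeeping in the last step: the topology on $\cD[0,T]$ is defined via weak convergence of the \emph{signed measures} $v_k(\cdot)\,dt$ (not pointwise or uniform convergence of the functions), so $\max_{\alpha_i\le t\le\beta_i}(\cdot)$ is not obviously continuous, and one must argue that at $\P$-a.e. flow the supremum over $[\alpha_i,\beta_i]$ of the continuous-from-the-left function $1-\sum_{k<K}v_k(\cdot)$ can be recovered as a $\liminf$ along the converging sequence. The clean way around this is to choose $\alpha_i,\beta_i$ among the (co-countably many) common continuity points of the limiting $v_k(\cdot)$ and to note that weak convergence of $v_{n,k}(\cdot)\,dt$ plus the uniform bound $q_n(T)\le$ const (which controls the total variation of all the $v_{n,k}$ via \eqref{floweqs} and \eqref{bounds_on_flow}) forces the needed pointwise control on a dense set of times; then the left-continuity of the limit upgrades this to the supremum. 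A subsidiary point requiring care is the claim that $\spect$ may be taken \emph{deterministic}: this follows because $r(\cdot)$ (hence $\theta(\cdot)$, hence the event $\{\theta(\spect)\ge 2\rbeps_2\}$) is measurable with respect to the limit flow, and if for \emph{every} deterministic $t$ the event $\{\theta(t)>0\}$ had probability zero, then by Fubini $\theta(\cdot)\equiv 0$ $\P$-a.s., contradicting the hypothesis; so some deterministic $\spect\in[\rbeps_1,T]$ works with positive probability.
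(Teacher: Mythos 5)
Your proposal is correct in outline and shares the paper's skeleton, but the transfer step is done by a different device. Step 1 is essentially the paper's argument: both rest on the Lipschitz growth bound \eqref{giant_lipchitz} of Lemma \ref{giant_growth rate} to go from $\P(\theta\not\equiv 0)>0$ to a \emph{deterministic} $\spect$ and an interval $[\spect-\rbeps_1,\spect]$ on which $\theta>\rbeps_2$ with probability at least $\rbeps_3$; the paper uses a union bound over the grid $t_i=\rbeps i/(2C^*)$ where you use Fubini, and your parenthetical claim that the limiting control function $r(\cdot)$ is deterministic is neither justified nor needed (Proposition \ref{propo:weaklimit} yields \eqref{smoluk2} with a possibly random control). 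Where you genuinely diverge is the passage to $\P_n$: you keep the sup functional and must fight its failure to be continuous in the topology of Definition \ref{def:flow_topology}, which you rightly flag as the main obstacle, whereas the paper sidesteps this by using the time--averaged functional $f_i=\frac{1}{\beta_i-\alpha_i}\int_{\alpha_i}^{\beta_i}\bigl(1-\sum_{k\le K}v_k(t)\bigr)\,dt$, which \emph{is} continuous on $\cD[0,T]$, so $\{f_i>\rbeps_2\}$ is open, Portmanteau applies, and $\max\ge$ average finishes the proof. Your route can be completed, but not quite as written: the $\alpha_i,\beta_i$ are prescribed, so you cannot ``choose'' them to be continuity points; instead, use that on the good event $1-\sum_{k<K}v_k(t)\ge\theta(t)>\rbeps_2$ at \emph{every} $t\in[\alpha_i,\beta_i]$, hence at some interior common continuity point $t^*$ of the finitely many $v_k$, and pointwise convergence at $t^*$ shows the good event lies in the interior of the max--event; only then does your displayed liminf inequality (which is false for the sup functional in general) become legitimate. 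In short: same structure, but the paper's averaging trick buys continuity for free, while your version needs this extra semicontinuity bookkeeping.
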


\begin{proof}
First we prove that if $\P$ does not satisfy \eqref{conserv} then
there exist $\rbeps_1,\rbeps_2,\rbeps_3>0$ and $\rbeps_1 \leq
\spect \leq T$ such that
\begin{equation}
\label{det_time_ineterval_giant} \P \big(\inf_{\spect-\rbeps_1
\leq t \leq \spect}\theta(t) > \rbeps_2\big) > \rbeps_3.
\end{equation}
Since \eqref{conserv} is violated, we have $ \P\big( \sup_{0\leq t
\leq T} \theta(t) > \rbeps\big)>\rbeps $ for some $\rbeps>0$.

Let $L:=\lfloor \frac{2 C^* T}{\rbeps} \rfloor$ and $t_i:=\frac{\rbeps
i}{2C^*} $ for $1 \leq i \leq L$ where $C^*$ is the constant in
\eqref{giant_lipchitz}.
 Since $\theta(0)=0$ we have
\[
\big\{ \sup_{0\leq t\leq T} \theta(t) > \rbeps \big\}
\subseteq
\bigcup_{i=1}^L
\big\{\theta(t_i)> \frac{\rbeps}{2} \big\}
\]
almost surely with respect to $\P$. Thus $\P\big(\theta(\spect)>
\frac{\rbeps}{2}\big) > \frac{\rbeps}{L}$ for some $\spect \in \{
t_1,\dots t_L \}$. Using \eqref{giant_lipchitz} again
\eqref{det_time_ineterval_giant} follows with
$\rbeps_1:=\frac{\rbeps}{4C^*}$, $\rbeps_2:=\frac{\rbeps}{4}$,
$\rbeps_3=\frac{\rbeps}{L}$.

Now given $K$ and the intervals $\lbrack \alpha_i,\beta_i
\rbrack$, $1 \leq i \leq m$ we define the continuous
functionals $f_i: \cD[0,T] \to \R$ by
\[
f_i\left(\vct{v}(0),\vct{q}(\cdot),\vct{r}(\cdot) \right):=
\frac{1}{\beta_i-\alpha_i} \int_{\alpha_i}^{\beta_i}
\big(1-\sum_{k=1}^K v_k(t)\big)dt
\]
where $v_k(t)$ is defined by
\eqref{floweqs}. Thus for all $i$
\[
H_i:= \{ \left( \vct{v}(0),\vct{q}(\cdot),\vct{r}(\cdot)
\right) \in \cD[0,T] : f_i\left(
\vct{v}(0),\vct{q}(\cdot),\vct{r}(\cdot) \right)>\rbeps_2 \}
\]
is an open subset of $\cD[0,T]$ with respect to the topology of
Definition \ref{def:flow_topology}. Thus by the definition of weak
convergence of probability measures we have
\[
\lim_{n \to \infty}
\P_n(H_i) \geq \P(H_i) \geq \P\left(\inf_{\spect-\rbeps_1 \leq t
\leq \spect}\theta(t) > \rbeps_2 \right) > \rbeps_3
\]
from which the claim of the lemma
easily follows.
\end{proof}

\begin{lemma}
\label{lemma:if_big_mass_big_fire_soon}
If $n^{-1} \ll\lambda(n)$ then for every $\rbeps_2>0$ there is a $\rbeps_4>0$
such that for every $\rbt>0$ there is a $K$ and an $n_1$ such that
for all $n \geq n_1$ $1-\sum_{k=1}^{K-1} v_{n,k}(0) \geq \rbeps_2$
implies
\begin{equation}
\label{if_big_mass_big_fire_soon_eq} \E_n \left(r_{n}(\rbt)\right)
\geq \rbeps_4
\end{equation}
\end{lemma}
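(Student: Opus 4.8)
The statement says: if a positive mass $\rbeps_2$ sits outside clusters of size $<K$ at time $0$ (for $K$ large, $n$ large), then a definite amount of mass $\rbeps_4$ burns by a small time $\rbt$. The natural strategy is to show that the ``big'' mass — the mass in clusters of size $\geq K$ — gets hit by lightning quickly, because (i) lightning rate per site is $\lambda(n)$, and (ii) a single cluster of size $\geq K$, once hit, contributes $\geq K/n$ to $r_n$, which is large relative to the number of such clusters.

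\medskip

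\textbf{Step 1: Reduce to a fixed number of big clusters.} At time $0$ the mass $1-\sum_{k=1}^{K-1}v_{n,k}(0)\geq \rbeps_2$ is distributed among clusters of size $\geq K$; there are at most $n/K$ of them, and the number $N$ of clusters of size in $[K, K')$ for a suitable threshold $K'$ can be controlled. The key observation is: the number of \emph{distinct} big clusters present is at most $1/\rbeps_2 \cdot (\text{something})$ only if they are really big; more usefully, the total number of sites in big clusters is $\geq \rbeps_2 n$, so the expected number of lightnings hitting \emph{some} big cluster in time $dt$ is $\geq \lambda(n)\rbeps_2 n\, dt$. Since $\lambda(n)\gg n^{-1}$, this rate $\lambda(n)\rbeps_2 n \to \infty$; so within \emph{any} fixed time $\rbt$, with overwhelming probability many big clusters get burnt.

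\medskip

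\textbf{Step 2: Each burning of a big cluster contributes a macroscopic amount to $r_n$.} When a cluster of size $k\geq K$ burns, $R_{n,k}$ increases by $k$, so $r_n = n^{-1}\sum_k R_{n,k}$ increases by $k/n \geq K/n$. The delicate point is that coagulation is simultaneously shuffling the cluster structure, so one has to be careful that the big mass does not dissolve into small clusters before it burns, or conversely that we do not double-count. I would handle this by a stopping-time / martingale argument: let $\mathcal M_n(t)$ be the mass currently residing in clusters of size $\geq K/2$ (say). On the event that $\mathcal M_n(0)\geq \rbeps_2$, either a lot of mass burns before time $\rbt$ (giving \eqref{if_big_mass_big_fire_soon_eq} directly), or $\mathcal M_n$ stays $\geq \rbeps_2/2$ throughout $[0,\rbt]$ — because coagulation only \emph{increases} cluster sizes and the only way to lose mass from ``size $\geq K/2$'' is through burning. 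In the latter case the instantaneous burning rate of big-cluster mass is $\geq \lambda(n)(\rbeps_2/2) n$, and over time $\rbt$ the expected total burnt mass is $\geq \tfrac12 \lambda(n)\rbeps_2 n \rbt \cdot \inf(\text{cluster size})/n$; since each burnt big cluster dumps at least its whole size, $\E_n(r_n(\rbt)) \gtrsim \rbeps_2 \rbt$ up to absolute constants, provided $n$ is large enough that $\lambda(n) n$ dominates. Choosing $\rbeps_4$ a suitable fraction of $\rbeps_2$ (but crucially \emph{independent of $\rbt$ and $K$}, which the statement demands — so actually the bound must come from a ``first burning'' event, not from accumulated rate).

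\medskip

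\textbf{Rethinking for the $\rbt$-uniformity.} Since $\rbeps_4$ must not depend on $\rbt$, the argument cannot rely on integrating a rate over $[0,\rbt]$ (that would give $\rbeps_4 \propto \rbt$). Instead: because $\lambda(n)\rbeps_2 n \to\infty$, for \emph{any} $\rbt>0$ the probability that at least one big cluster burns before $\rbt$ tends to $1$; say it is $\geq 3/4$ for $n\geq n_1$. On that event $r_n(\rbt)\geq K/n$, which is tiny — so that is not enough either. The correct mechanism: I claim that on the $\theta$-is-big event, the big mass is concentrated in \emph{few, genuinely large} clusters (size $\gg$ anything, eventually comparable to $\theta n$, by the structure of near-critical/supercritical Erd\H{o}s--R\'enyi graphs), so burning even \emph{one} of them dumps mass $\approx \rbeps_2$. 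Hence $\E_n(r_n(\rbt)) \geq (3/4)\cdot \rbeps_2/2 =: \rbeps_4$.

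\medskip

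\textbf{Main obstacle.} The hard part is precisely this claim that ``mass $\geq \rbeps_2$ outside size-$<K$ clusters'' forces that mass into \emph{a bounded number} of clusters (so that a single burning is macroscopic), uniformly as $K\to\infty$. This is where the assumption $m_2(0)<\infty$ (propagated via Lemma~\ref{lemma:E}, giving $E(t,x)\asymp 1$ and hence via Lemma~\ref{lemma:calculus} the $k^{-1/2}$-type tail of $\sum_{l\geq k} v_{n,l}$) must enter: it prevents the big mass from being spread as dust across $\Theta(n/K)$ medium clusters. Concretely one would argue that $1 - \sum_{k<K} v_{n,k} = \theta_n + (\text{tail}) $, the tail is $O(K^{-1/2})$ uniformly by the near-critical estimate, so for $K$ large the tail is $\leq \rbeps_2/2$ and thus $\theta_n \geq \rbeps_2/2$: the excess really is giant-component mass, living in one cluster. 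Then a single lightning on that giant component gives $r_n$ a jump of $\approx \rbeps_2/2$, and the probability that the giant is hit before $\rbt$ is $1-o(1)$ since $\lambda(n)\theta_n n \gg 1$. Assembling: $\E_n(r_n(\rbt))\geq (1-o(1))\cdot \rbeps_2/2 \geq \rbeps_4 := \rbeps_2/4$, for $n \geq n_1$ and $K \geq K(\rbeps_2)$, as required. The technical care is in making the ``tail is uniformly small'' step rigorous \emph{at the finite-$n$ level} (not just in the limit), which presumably uses a finite-$n$ companion of the generating-function estimates, or a separate combinatorial lemma about subcritical/critical Erd\H{o}s--R\'enyi cluster tails.
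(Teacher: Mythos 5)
There is a genuine gap, and it sits exactly where you placed your ``main obstacle''. Your closing mechanism rests on the claim that the excess mass $1-\sum_{k<K}v_{n,k}(0)\geq\rbeps_2$ must essentially be the mass of a \emph{single} macroscopic cluster, deduced from a uniform $O(K^{-1/2})$ tail bound. But the lemma assumes nothing about the configuration beyond the mass condition itself, and in the only place the lemma is used (the proof of Proposition \ref{propo:conservative}) it is invoked through the strong Markov property at the stopping times $\tau_i$, so the ``initial'' state is an arbitrary random state of the chain. Such a state may have all of the excess mass spread over roughly $\rbeps_2 n/K$ clusters of size exactly $K$; then each lightning strike burns only $K/n=o(1)$ mass and the single-strike argument yields nothing. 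The near-critical tail estimates you want to borrow (Lemma \ref{lemma:calculus}, Lemma \ref{lemma:E}, Proposition \ref{propo:summary}) are properties of the limiting solution of the critical equations, not of the finite-$n$ chain at an arbitrary stopping time; establishing such a bound for the finite-$n$ dynamics would be essentially circular, since the present lemma is one of the ingredients used to prove that the limit has this structure. Note also that at finite $n$ one has $\sum_k v_{n,k}(t)\equiv 1$, so there is no ``$\theta_n$'' living in one giant cluster to appeal to. Your Step 2 observation (mass can leave ``size $\geq K$'' only by burning) is correct but, as you acknowledge, only yields bounds proportional to $K/n$ or to $\rbt$, which cannot give an $\rbeps_4$ independent of $\rbt$.

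What is actually needed — and what the paper does — is to use the \emph{coagulation dynamics}, not static structure, to concentrate the dusty mass. Lemma \ref{lemma:almost_giant_or_lot_burnt} shows, by running a finite-$n$ analogue of the Burgers/characteristics argument with martingale error control, that within time $\rbtfin=C_1/(K\rbeps_2)$ either mass $\geq C_2\rbeps_2$ has already burnt or mass $\geq C_2\rbeps_2$ has coagulated into clusters of size $\geq C_3\rbeps_2 n^{1/3}$. Lemma \ref{lemma:if_almost_giant_then_big_fire_soon} then shows, via a submartingale bound on the logarithm of a tagged vertex's cluster size together with $n\lambda(n)\to\infty$, that such $n^{1/3}$-clusters merge to macroscopic size and burn within a time that tends to $0$, producing $\expect{r_n}\geq C_5\rbeps_2$. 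Since $K$ can be taken large enough that $C_1/(K\rbeps_2)<\rbt$, the constant $\rbeps_4$ depends only on $\rbeps_2$, which is precisely the quantifier order the statement demands. Your proposal is missing this dynamical concentration step, and the static replacement you suggest is not available under the lemma's hypotheses.
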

The proof of Lemma \ref{lemma:if_big_mass_big_fire_soon} will
follow as a consequence of the Lemmas
\ref{lemma:almost_giant_or_lot_burnt} and
\ref{lemma:if_almost_giant_then_big_fire_soon}.

\begin{proof}[Proof of Proposition \ref{propo:conservative}]
We are going to show that if there is a sequence $\P_n$ such that
the weak limit point $\P$ violates \eqref{conserv} then for some
$n$ we have \be\label{contradiction:too_much_fire} \E_n
\left(r_n(T)\right)
> T +2 \ee which is in contradiction with \eqref{q_n_upper_bound}
and \eqref{bounds_on_flow}. In fact, $T+2$ could be replaced with any finite constant in \eqref{contradiction:too_much_fire}, but  $T+2$ is big enough to have a contradiction.

We define $\rbeps_1$, $\rbeps_2$, $\rbeps_3>0$ and $\spect$
using Lemma \ref{lemma:disjoint_intervals_big_mass}.
Next, we define
$\rbeps_4$ using this $\rbeps_2$ and Lemma
\ref{lemma:if_big_mass_big_fire_soon}.
Given these, we choose $\rbt$ be so small that
\[
\left\lfloor \frac{\rbeps_1}{2\rbt} \right\rfloor \rbeps_3 \rbeps_4 >T+2.
\]
We choose $K$ and $n_1$
big enough so that \eqref{if_big_mass_big_fire_soon_eq} holds.
Further on, we fix the intervals $\lbrack \alpha_i,\beta_i \rbrack$, $1
\leq i \leq m=\lfloor \frac{\rbeps_1}{2\rbt} \rfloor$ so that
$\alpha_{i+1}-\beta_i > \rbt$ holds for all $i$
and also $T -\beta_m>\rbt$ holds.
We choose $n_0$ such that
\eqref{disjoint_intervals_big_mass} holds and let
$n:=\max \{n_0,n_1\}$.

Finally, we define the stopping times $\tau_1, \tau_2,\dots, \tau_m$ by
\[
\tau_i:=\beta_i \wedge \min \{ t: t \geq \alpha_i \text{ and } 1-\sum_{k=1}^{K-1} v_{n,k}(t)
\geq \rbeps_2 \}.
\]
We have $\tau_i+\spect \leq \beta_i +\spect < \alpha_{i+1} \leq
\tau_{i+1}$.

Using the strong Markov property,
\eqref{if_big_mass_big_fire_soon_eq} and
\eqref{disjoint_intervals_big_mass}, the inequality
\eqref{contradiction:too_much_fire} follows:
\[
\expect{r_n(T)}\geq
\sum_{i=1}^m \condexpect{r_n(\tau_i+\rbt)-r_n(\tau_i)}{\tau_i
<\beta_i}\prob{\tau_i<\beta_i}\geq m \rbeps_4 \rbeps_3.
\]
\end{proof}
 Lemma \ref{lemma:if_big_mass_big_fire_soon} stated that if initially a lot of mass is contained in big
 components, then in a short time a lot of mass burns. We prove this statement in two steps: in Lemma
\ref{lemma:almost_giant_or_lot_burnt} we prove that if we start with a lot of mass contained in big components,
 then in a short time either a lot of this mass is burnt or the big components coagulate, so a lot of mass is contained in components of size $n^{1/3}$ (the same proof works if we replace the exponent $\alpha=1/3$ by any $0<\alpha<1/2$). Then in Lemma \ref{lemma:if_almost_giant_then_big_fire_soon} we prove that if we start with a lot of components of size $n^{1/3}$ then in a short time a lot of mass burns.

 $ $

We will make use of the following generating function estimates in
the proof of Lemma \ref{lemma:almost_giant_or_lot_burnt}. If
$V(x)$ is defined as in \eqref{gfdef} and if $\vct{v} \in \cV_1$
then for $\varepsilon \leq \frac12$
\begin{eqnarray}
\label{tail_to_gf}
1-\sum_{k=1}^{K-1} v_k \geq \varepsilon
& \implies &
V(1/K) \leq (e^{-1}-1)\varepsilon
\\
\label{gf_to_tail}
V(1/K) \leq -\varepsilon
& \implies &
1-\sum_{k=1}^{\varepsilon K/2} v_k  \geq \varepsilon/4.
\end{eqnarray}

\begin{lemma}
\label{lemma:almost_giant_or_lot_burnt}
There are constants $C_1 <+\infty$, $C_2>0$, $C_3>0$ such that
if
\begin{equation}
\label{bigmass66}
1-\sum_{k=1}^{K-1} v_{n,k}(0) \geq
\rbeps_2
\end{equation}
for all $n$ then
\begin{equation}
\label{almost_giant_or_lot_burnt_eq}
\lim_{n \to \infty}
\prob{ \sum_{k=C_3 \rbeps_2 n^{1/3}}^{n}
v_{n,k}\left(\rbtfin\right)+ r_n\left(\rbtfin\right) \geq C_2
\rbeps_2} =1
\end{equation}
Where $\rbtfin=\frac{C_1}{K\rbeps_2}$.
\end{lemma}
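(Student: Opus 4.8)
The plan is to work with the generating functions $V_n(t,x):=\sum_{k\ge1}v_{n,k}(t)e^{-kx}-1$ of the random states. For the finite system these are conservative, so $V_n(t,0)=0$, $-1\le V_n\le0$, $\partial_xV_n\le0$, and by \eqref{tail_to_gf} the hypothesis \eqref{bigmass66} reads $V_n(0,1/K)\le(e^{-1}-1)\rbeps_2=:-c_0\rbeps_2$ with $c_0=1-e^{-1}$. The first step is a purely deterministic monotonicity estimate, which also produces the dichotomy with burning: a coagulation step $\sigma_{i,j}$ changes $\tilde V_n(\cdot,x):=V_n(\cdot,x)+1$ by $\tfrac1n\big((i{+}j)e^{-(i+j)x}-ie^{-ix}-je^{-jx}\big)\le0$ (since each term on the right is dominated by one on the left), while a fire $\tau_i$ changes it by $\tfrac1n i(e^{-x}-e^{-ix})\in[0,\tfrac1n ie^{-x}]$; summing fire increments over $[0,t]$ gives $\sum_k r_{n,k}(t)(e^{-x}-e^{-kx})\le e^{-x}r_n(t)$, so that, deterministically, $V_n(t,x)\le V_n(0,x)+e^{-x}r_n(t)$ for all $t,x$. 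Consequently, if $r_n(\rbtfin)\ge\tfrac{c_0}{8}\rbeps_2$ then \eqref{almost_giant_or_lot_burnt_eq} holds outright, and from now on we may assume $r_n(t)<\tfrac{c_0}{8}\rbeps_2$ for all $t\le\rbtfin$.

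Next I would identify the target coming from the deterministic coagulation flow. Let $\bar V(t,x)$ solve the inviscid Burgers equation (i.e. \eqref{burgers2} with $r\equiv0$) with $\bar V(0,\cdot)=V_n(0,\cdot)$; this is the generating function of the Smoluchowski solution \eqref{smolu1} started from $\vct{v}_n(0)$. Since $\bar V(0,\cdot)$ is non-increasing with $\bar V(0,1/K)\le-c_0\rbeps_2$, the level $-c_0\rbeps_2$ is attained at some $x_0\le1/K$, it rides a characteristic at leftward speed $c_0\rbeps_2$, and hence $\theta(t)=-\bar V(t,0^+)\ge c_0\rbeps_2$ for all $t\ge(c_0K\rbeps_2)^{-1}$ — here the structure of the multiplicative-kernel Smoluchowski solution enters (no interior shock for $x>0$, $\theta$ non-decreasing in pure coagulation; in plain words, the $\ge\rbeps_2$ amount of mass initially in clusters of size $\ge K$ is swept into the giant within time $(c_0K\rbeps_2)^{-1}$). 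Choosing $C_1:=2/c_0$ in $\rbtfin=\tfrac{C_1}{K\rbeps_2}$ and $x_n:=n^{-1/3}$, monotonicity of $\bar V(\rbtfin,\cdot)$ yields $\bar V(\rbtfin,x_n)\le-\theta(\rbtfin)\le-c_0\rbeps_2$, deterministically, for every admissible initial state.

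By \eqref{gf_to_tail} it now suffices to show $V_n(\rbtfin,x_n)\le-\tfrac{c_0}{2}\rbeps_2$ with probability tending to $1$ (which puts mass $\ge\tfrac{c_0}{8}\rbeps_2$ into clusters of size $\ge\tfrac{c_0}{4}\rbeps_2 n^{1/3}$, i.e. \eqref{almost_giant_or_lot_burnt_eq} with $C_2=\tfrac{c_0}{8}$, $C_3=\tfrac{c_0}{4}$), so by the previous paragraph it is enough to control $e_n:=V_n-\bar V$ and show $\sup_{t\le\rbtfin}|e_n(t,x_n)|\le\tfrac{c_0}{2}\rbeps_2$ w.h.p. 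The burning part of $e_n$ is $\le e^{-x}r_n(\rbtfin)<\tfrac{c_0}{8}\rbeps_2$; for the rest, along the $\bar V$-characteristic $\chi$ through $(\rbtfin,x_n)$ one has $\tfrac{d}{ds}e_n(s,\chi(s))=-e_n(s,\chi(s))\,\partial_xV_n(s,\chi(s))+(\text{martingale increment})+O(1/n)$, and Gronwall gives
\[
\sup_{t\le\rbtfin}|e_n(t,\chi(t))|\ \le\ \Big(\tfrac{c_0}{8}\rbeps_2+\sup_{t,\,x\ge x_n}|\mathrm{Mart}_n(t,x)|+O(1/n)\Big)\exp\!\Big(\int_0^{\rbtfin}|\partial_xV_n(s,\chi(s))|\,ds\Big).
\]
The martingale term is killed by a quadratic-variation bound: a coagulation jump moves $\tilde V_n(\cdot,x)$ by at most $\max_k\tfrac kne^{-kx}\le(exn)^{-1}\le e^{-1}n^{-2/3}$ on $\{x\ge x_n\}$, the coagulation rate is $\le n/2$ with $\E q_n(\rbtfin)\le\rbtfin$ by \eqref{q_n_upper_bound}, and the fire part of the martingale is deterministically $\le\tfrac{c_0}{8}\rbeps_2$; hence the quadratic variation is $O\big((n^{-2/3})^2\cdot n\big)=O(n^{-1/3})\to0$ and Doob's inequality finishes. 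This is precisely where the scale $n^{-1/3}$ — more generally $n^{-\alpha}$ with any $\alpha<\tfrac12$ — is forced, the quadratic variation at scale $n^{-\alpha}$ being $O(n^{2\alpha-1})$.

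The exponential factor is the crux and the step I expect to cost the most work: the crude bound $|\partial_xV_n(s,x)|\le e^{-1}x^{-1}$ of \eqref{trivibounds} makes the integral of order $n^{1/3}$ and is useless, so one needs a bound of the form $|\partial_xV_n(s,x)|\le C(\rbeps_2,K)\,x^{-1/2}$ on $x\ge x_n$, $t\le\rbtfin$ — the finite-$n$ analogue of the $E(t,x)\asymp1$ estimates of Lemmas \ref{lemma:calculus}–\ref{lemma:E} — which, using $\chi(s)\ge c_0\rbeps_2(\rbtfin-s)$, makes $\int_0^{\rbtfin}|\partial_xV_n(s,\chi(s))|\,ds$ converge uniformly in $n$ and closes the argument. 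Establishing this derivative control while the cluster-size distribution steepens towards the emerging gelation singularity, and while the initial state may carry an $O(1)$ amount of $k^2$-mass (so that the $E\asymp1$ reasoning must be localised to $x\ge x_n$ with $n$-independent constants, or run against the Smoluchowski flow started from a suitable truncation of $\vct{v}_n(0)$), is the technical heart. A more combinatorial alternative avoids generating functions altogether: Step 1's bound applied to tail sums keeps $\ge\tfrac{\rbeps_2}{2}$ of the mass in clusters of size $\ge K$ throughout $[0,\rbtfin]$; a cluster of size $\ge K2^j$ merges with another cluster carrying part of this mass at rate $\gtrsim K2^j\rbeps_2$, so in time $\Theta\big((K2^j\rbeps_2)^{-1}\big)$ most of it climbs from scale $K2^j$ to scale $K2^{j+1}$, and the geometric sum $\sum_j\Theta\big((K2^j\rbeps_2)^{-1}\big)=\Theta\big((K\rbeps_2)^{-1}\big)=\Theta(\rbtfin)$ brings the mass to scale $\sim n^{1/3}$ within $\rbtfin$ — there the difficulty resurfaces as controlling the inflow of fresh mass into each dyadic size-band from below, so that the upward cascade does not stall.
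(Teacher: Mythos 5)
Your overall architecture (the dichotomy ``either $r_n$ is already of order $\rbeps_2$ or the generating function at $x=n^{-1/3}$ has dropped below $-c\,\rbeps_2$'', the use of \eqref{tail_to_gf} and \eqref{gf_to_tail}, and the observation that any exponent $\alpha<1/2$ works) matches the intended strategy, but the proposal has a genuine gap at exactly the step you yourself flag as ``the technical heart'': the Gr\"onwall bound along the deterministic characteristic requires $\int_0^{\rbtfin}\abs{\partial_x V_n(s,\chi(s))}\,ds$ to be bounded \emph{uniformly in $n$} (indeed small enough that the exponential factor does not eat the margin $\tfrac{c_0}{8}\rbeps_2$), and no such bound is available here. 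The paper's $E(t,x)\asymp 1$ machinery (Lemmas \ref{lemma:calculus}, \ref{lemma:E}) applies to solutions of the limiting Burgers control problem and carries constants depending on $m_2(0)$; but Lemma \ref{lemma:almost_giant_or_lot_burnt} is applied (in the proof of Proposition \ref{propo:conservative}) at stopping times $\tau_i$ of the finite-$n$ chain, where the ``initial'' state is an arbitrary random configuration subject only to \eqref{bigmass66} --- that hypothesis forces a macroscopic amount of mass into clusters of size $\ge K$, so the relevant second and third moments are unbounded and a bound of the type $\abs{\partial_x V_n(t,x)}\le C(\rbeps_2,K)\,x^{-1/2}$ on $x\ge n^{-1/3}$ simply cannot be assumed; it can even fail outright once mass reaches scale $n^{1/3}$, which is precisely the event you are trying to produce. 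With only the trivial estimate \eqref{trivibounds}, $\abs{V_n'}\le e^{-1}x^{-1}$, the exponential factor is of order $\exp\big(c\,\rbeps_2^{-1}\log n\big)$, which destroys the estimate. The same obstruction resurfaces, as you note, in the dyadic-cascade alternative, so neither route is closed.

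The paper's actual proof avoids any derivative control by a different device: instead of comparing $V_n$ to the deterministic Smoluchowski solution along a $\bar V$-characteristic, it runs a \emph{random} characteristic driven by the process itself, $\dot\rbch_n(t)=W_n(t,\rbch_n(t))$, $\rbch_n(0)=1/K$, where $W_n=V_n-\lambda(n)$. Along this curve the transport term cancels exactly against the nonlinear drift, so $\rbu_n(t)=U_n(t,\rbch_n(t))-W_n(0,\tfrac1K)$ has generator bounded by $n^{-1}\rbch_n(t)^{-2}$ and quadratic variation $\Ordo\big(n^{-1}\rbch_n(t)^{-2}\big)$, both controlled by \eqref{trivibounds} alone once one stops at $\tau_n=\min\{t:\rbch_n(t)=n^{-1/3}\}$ (giving errors $\Ordo(n^{-1/3})$); Doob's inequality then shows $\sup_t \rbu_n(t\wedge\tau_n\wedge T)\Rightarrow 0$, and the dichotomy you describe is run on the events $A_n$, $B_n$ exactly as in your sketch. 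In short, your plan reproduces the paper's ``limit'' heuristic and its bookkeeping, but the finite-$n$ fluctuation control is done in the paper without any Gr\"onwall factor, precisely because the unproved uniform bound on $\partial_x V_n$ that your argument hinges on is not obtainable under the hypotheses of the lemma. As it stands, the proposal is an outline with the decisive estimate missing.
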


\begin{proof}[Sketch proof]
If we let $n \to \infty$ immediately, we get that the limiting functions $v_1(t),v_2(t),\dots$    solve \eqref{smoluk2}, \eqref{inequality_smol}, \eqref{inicond2} with a possibly random control function $r(t) \equiv r_{\infty}(t)$.

The $n \to \infty$ limit of \eqref{almost_giant_or_lot_burnt_eq} is
\begin{equation}\label{toy}
 \theta \left( \rbtfin \right)+ r\left(\rbtfin\right) \geq C_2
\rbeps_2 \end{equation}
Now we prove that if $\vct{v}(\cdot)$ is a solution of \eqref{smoluk2}, \eqref{inequality_smol}, \eqref{inicond2} then
 $1-\sum_{k=1}^{K-1} v_{k}(0) \geq \rbeps_2$  implies \eqref{toy} with $C_1=4$ and $C_2=\frac{1}{4}$.
  This proof will also serve as an outline of the proof of Lemma \ref{lemma:almost_giant_or_lot_burnt}.

In order to prove \eqref{toy} define $V(t,x)$ by \eqref{gfdef}. Thus $V(t,x)$ solves the integrated Burgers control problem \eqref{burgers2}, \eqref{burgers_boundary_ineq}, \eqref{burinicond2}.

Define $U(t,x):=V(t,x)-r(t)e^{-x}$. Thus $U'(t,x)=V'(t,x)+r(t)e^{-x}$ and by \eqref{burgers2} we have
$\dot U (t,x)=-V(t,x)V'(t,x)$. Define the characteristic curve $\rbch(\cdot)$ by
\begin{equation}\label{charadiff_sketch}
\dot \rbch(t)=V(t,\rbch(t)) \qquad \rbch(0)=\frac{1}{K}
\end{equation}
Let $u(t):=U(t,\rbch(t))-V(0,\frac1K)$. Thus $u(0)=0$,
 and
\begin{multline}\label{chara_u_ineq_sketch}
 \dot u(t)=\dot U(t,\rbch(t))+U'(t,\rbch(t))\dot \rbch(t)=
 -V(t,\rbch(t))V'(t,\rbch(t)) + \\
 \left( V'(t,\rbch(t))+r(t)e^{-\rbch(t)}\right) V(t,\rbch(t))=r(t)e^{-\rbch(t)}V(t,\rbch(t)) \leq 0.
 \end{multline}
 Thus $u(t) \leq 0$, moreover
 \begin{equation}\label{chara_V_sketch}
 V(t,\rbch(t))=V(0,\frac1K) + r(t)e^{-\rbch(t)}+
 u(t) \leq V(0,\frac1K)+r(t),
 \end{equation}
 \begin{equation}\label{charaint_sketch}
\rbch(t)=\frac{1}{K}+\int_0^t
u(s)\,ds +\int_0^t r(s) e^{-\rbch(s)}ds+t
V(0,\frac{1}{K}) \leq
 \frac1K + t\cdot r(t)+t
V(0,\frac{1}{K}).
\end{equation}
By \eqref{tail_to_gf} we have $V(0,\frac1K) \leq -\frac12 \rbeps_2$. In order to prove that
$\theta \left( \rbtfin \right)+ r\left(\rbtfin\right) \geq \frac14 \rbeps_2$
with $\rbtfin=\frac{4}{K \rbeps_2}$ we consider two cases:

If $r \left( \rbtfin \right) \geq \frac14 \rbeps_2$ then we are done.
If $r \left( \rbtfin \right) < \frac14 \rbeps_2$ define $\tau:=\min \{t: \rbch(t)=0\}$.
By \eqref{charaint_sketch} we have
\[ \rbch(\rbtfin) \leq \frac1K + \rbtfin\cdot r(\rbtfin)+ \rbtfin\cdot \left(-\frac12 \rbeps_2 \right) <
 \frac1K +\frac1K- \frac2K=0 \]
Thus $\tau \leq \rbtfin$.  By \eqref{chara_V_sketch} we get
\[ -\theta(\tau)=V(\tau,0)=
V(\tau,\rbch(\tau)) \leq -\frac12 \rbeps_2 + \frac14 \rbeps_2  = -\frac14 \rbeps_2\]
Thus $\frac14 \rbeps_2 \leq \theta(\tau) \leq \theta(\tau)+r(\tau)\leq
\theta \left( \rbtfin \right)+ r\left(\rbtfin\right)$ because by \eqref{ev_of_v_infty_in_flow} the function
 $\theta(t)+r(t)$ is increasing.
\end{proof}
To make this proof work for Lemma \ref{lemma:almost_giant_or_lot_burnt}  we have to deal with the fluctuations caused by randomness, combinatorial error terms and the fact that $\lambda(n)$ only disappears in the limit.
\begin{proof}[Proof of Lemma \ref{lemma:almost_giant_or_lot_burnt}]
Given a FFF obtained from a forest fire Markov process by
\eqref{markov_flow_weights},\eqref{markov_flow_coag} and
\eqref{markov_flow_fire},
 define
 \[
 U_n(t,x):= \sum_{k=1}^n
 \left[
 v_{n,k}(0) +
\frac{k}{2} \sum_{l=1}^{k-1}q_{n,l,k-l}(t) - k q_{n,k}(t) -
 r_{n,k}(t)
 \right]
e^{-kx}-1-\lambda(n)
\]
By \eqref{floweqs} we have
\[
U_n(t,x)+r_{n}(t)e^{-x}=\sum_{k=1}^n v_{n,k}(t)e^{-kx}-1-\lambda(n)=:
V_n(t,x)-\lambda(n)
 =: W_n(t,x).
\]
\begin{align*}
 W'(t,x)&=-\sum_{k \geq 1} k\cdot v_{n,k}(t)e^{-kx}
 \\ -\frac12 \partial_x \left( W(t,x)+1+\lambda(n)\right)^2 &=
 \sum_{k \geq 1} \frac{k}{2} \sum_{l=1}^{k-1} v_{n,l}(t) v_{n, k-l}(t) e^{-kx} \\
 W''(t,x)&= \sum_{k \geq 1} k^2 \cdot v_{n,k}(t)e^{-kx} \\
W''(t,2x)&= \sum_{k \geq 1} \left( \frac{k}{2}\right)^2 \cdot \one [ 2\, |\, k]\cdot v_{n, \frac{k}{2}}(t) e^{-kx}
\end{align*}

If $X(t)$ is a process adapted to the
filtration $\cF(t)$, let
\[L\,X(t):=\lim_{dt \to 0}\frac{1}{dt} \condexpect{X(t+dt)-X(t)}{\cF_t}\]

Using the martingales of
Proposition \ref{propo:weaklimit} we get
\begin{multline}\label{U_infinitesimal}
L \, U_n(t,x)=
\sum_{k \geq 1}
 \left[
\frac{k}{2} \sum_{l=1}^{k-1} L\, q_{n,l,k-l}(t) - k\cdot L\, q_{n,k}(t) -
L\, r_{n,k}(t)
 \right] e^{-kx}=\\
 \sum_{k \geq 1}
 \left[
\frac{k}{2} \sum_{l=1}^{k-1}
 \left( v_{n,l}(t)v_{n,k-l}(t) -\frac{l\cdot \one [2l=k] }{n} v_{n,l}(t) \right)- \right. \\
   \left. k\cdot \left( v_{n,k}(t)-\frac{k}{n} v_{n,k}(t)  \right) -
\left( \lambda(n)\cdot k \cdot v_{n,k}(t)  \right)
 \right] e^{-kx}=\\
 -\frac12 \partial_x \left( W(t,x)+1+\lambda(n)\right)^2-\frac{1}{n}W''(t,2x) +\\
  W'(t,x)+
 \frac{1}{n} W''(t,x) + \lambda(n) W'(t,x)=\\
 -W_n^{\prime}(t,x)W_n(t,x)+ \frac{1}{n} \left( W_n^{\prime \prime}
(t,x)-W_n^{\prime \prime}(t,2x) \right)
  \end{multline}

 Given the random
function $W_n(t,x)$ we define the random characteristic curve
$\rbch_n(t)$ similarly to \eqref{charadiff_sketch}:
\begin{equation}
\label{rnd_charadiff}
 \dot{\rbch}_n(t)=W_n(t,\rbch_n(t)),\quad \quad \rbch_n(0):=\frac{1}{K}
\end{equation}
This ODE is well-defined although $W_n(t,x)$ is not continuous in
$t$, but almost surely it is a step function with finitely many
steps which is a sufficient condition to have well-posedness for
the solution of \eqref{rnd_charadiff}. Define
$\rbu_n(t):=U_n(t,\rbch_n(t))-W_n(0,\frac1K)$. Thus $\rbu_n(0)=0$ and
\begin{equation}\label{u_n_explicit}
\rbu_n(t)=W_n(t,\rbch_n(t))-W_n(0,\frac1K)-r_n(t)e^{-\rbch_n(t)}=
V_n(t,\rbch_n(t))-V_n(0,\frac1K)-r_n(t)e^{-\rbch_n(t)}
\end{equation}

 The solution of \eqref{rnd_charadiff} is
\begin{equation}
\label{char_rnd_int}
\rbch_n(t)=\frac1K+\int_0^t
\rbu_n(s)\,ds +\int_0^t r_n(s) e^{-\rbch_n(s)}ds+t
W_n(0,\frac1K)
\end{equation}
Putting together \eqref{U_infinitesimal} and
\eqref{rnd_charadiff} similarly to \eqref{chara_u_ineq_sketch} and using \eqref{trivibounds} we get
\begin{equation}
\label{char_infinitesimal}
 L \, \rbu_n(t) \leq
 \frac{1}{n}
\left( W_n^{\prime \prime} (t,\rbch_n(t))-
W_n^{\prime\prime}(t,2\rbch_n(t))\right)
\leq n^{-1} \cdot \rbch_n(t)^{-2}
\end{equation}
Now $\wt\rbu_n(t)=\rbu_n(t)-\int_0^t  L\, \rbu_n(s)  ds$ is a
martingale and
\begin{multline}
\label{char_variance}
L \, \wt \rbu_n(t)^2
=
\lim_{h \to 0_+}\frac{1}{h}
\condexpect{\big(U_n(t+h,\rbch_n(t))-U_n(t,\rbch_n(t))\big)^2
}{\cF_t} \leq
\\
\frac{1}{2} \sum_{k,l=1}^n
\left(\frac{k+l}{n} e^{-(k+l)\rbch_n(t)}
-\frac{k}{n} e^{-k\rbch_n(t)} -\frac{l}{n}e^{-l\rbch_n(t)}\right)^2
v_{n,k}(t)v_{n,l}(t) n
\\
+ \sum_{l=1}^n \left(\frac{l}{n}
e^{-l\rbch_n(t)}\right)^2 \lambda(n) v_{n,l}(t) n
=
\Ordo \left(\frac{1}{n} W_n^{\prime \prime} (t,\rbch_n(t)) \right)
=\Ordo \left( n^{-1} \cdot \rbch_n(t)^{-2} \right)
\end{multline}
Define the stopping time
\[
\tau_n:=\min\{t:\rbch_n(t)=n^{-\alpha}\} \quad \alpha=1/3.
\]
In fact any $0<\alpha<1/2$ would be just as good to make the right-hand side of
\eqref{char_infinitesimal} and \eqref{char_variance} disappear when $t \leq \tau_n$ and $n \to \infty$.

It
follows from
\eqref{char_variance} and
 Doob's maximal inequality that
 \begin{equation*}
\sup_t \abs{\wt \rbu_n(t \wedge \tau_n \wedge T)}
\weak 0 \quad\text{ as }\quad  n \to \infty
\end{equation*}
By \eqref{char_infinitesimal} we have $\wt \rbu_n(t) + \int_0^t
 n^{-1} \cdot \rbch_n(s)^{-2} \, ds \geq \rbu_n(t)$ thus
\begin{equation}
\label{char_weak}
\sup_t  \rbu_n(t \wedge \tau_n \wedge T)
\weak 0 \quad\text{ as }\quad  n \to \infty
\end{equation}

By \eqref{tail_to_gf} and \eqref{bigmass66} we have
\begin{equation}
\label{rbeps5}
V_n(0,\frac1K)
 \leq
 (e^{-1} -1)\rbeps_2=:-\rbeps_5
\end{equation}

Define the events $A_n$, $B_n$ and the time $\rbtfin_n$ by
\begin{align*}
A_n
&:=
\big\{ \sup_{t\leq \tau_n\wedge T}  \int_0^t \rbu_n(s) ds \leq \frac{1}{K} \big\}
\cap
\big\{\rbu_n(\tau_n\wedge T) \leq \rbeps_5/3 \big\},
\\[8pt]
B_n
&:=
\big\{r_n(\tau_n) \leq \rbeps_5/3\big\},
\\[8pt]
\rbtfin_n
&:=
\frac{3}{K \abs{W_n(0, \rbch_n(0))}} \leq \frac{3}{K\rbeps_5},
\end{align*}
 We
are going to show that that there are constants $C_2,C_3<+\infty$
such that
\begin{equation}
\label{we_show_this}
A_n \subseteq
\big\{ \sum_{k=C_3
\rbeps_2 n^{1/3}}^{n} v_{n,k}\left(\rbtfin\right)+
r_n\left(\rbtfin\right) \geq C_2 \rbeps_2 \big\}
\end{equation}
which, since \eqref{char_weak} implies that $\lim_{n \to \infty}
\prob{A_n}=1$, gives \eqref{almost_giant_or_lot_burnt_eq}.

First we show that
\begin{equation}
\label{ABsubseteq}
A_n \cap B_n \subseteq \{\tau_n \leq \rbtfin_n \}.
\end{equation}
If we assume indirectly that $A_n$, $B_n$ and $\tau_n>\rbtfin_n$ hold
then $\int_0^{\rbtfin_n} \rbu_n(s) ds\leq\frac{1}{K}$,
so by \eqref{char_rnd_int} we get
\[
\rbch_n(\rbtfin_n) \leq
\frac{1}{K} +\frac{1}{K}+ \int_0^{\rbtfin_n}
r_n(s) e^{-\rbch_n(s)}ds+\rbtfin_n W_n(0,\rbch_n(0))
\leq
-\frac{1}{K}+
\rbtfin_n \cdot r_n(\tau_n) \leq 0.
\]
But $\rbch_n(\rbtfin_n) \leq 0$ is in
contradiction with $\tau_n>\rbtfin_n$, thus \eqref{ABsubseteq} holds.

Now, by \eqref{u_n_explicit} we have
$V_n(\tau_n,n^{-1/3})=u_n(\tau_n)+V_n(0,\frac1K)+r_n(\tau_n)e^{-n^{-1/3}}$. Thus by
\eqref{rbeps5}, the definition of $A_n$ and $B_n$ and \eqref{gf_to_tail} we get
\begin{multline*}
A_n \cap B_n
\subseteq \big\{ u_n(\tau_n) \leq \frac{\rbeps_5}{3} \big\} \cap
\big\{ V_n(0,\frac1K) \leq -\rbeps_5 \big\} \cap
\big\{ r_n(\tau_n)e^{-n^{-1/3}} \leq \frac{\rbeps_5}{3} \big\} \subseteq \\
\big\{ V_n(\tau_n,n^{-1/3}) \leq \frac{-\rbeps_5}{3} \big\} \subseteq
\big\{\sum_{k=n^{1/3}{\rbeps_5}/{6} }^n v_{n,k}(\tau_n) \geq
{\rbeps_5}/{12}\big\}
\end{multline*}

Thus we have
\begin{multline*}
A_n \subseteq (A_n \cap B_n)\cup B_n^c
\subseteq
\big\{\sum_{k=n^{1/3}{\rbeps_5}/{6} }^n v_{n,k}(\tau_n) \geq
{\rbeps_5}/{12}\big\}
\cup
\big\{r_n(\tau_n) > \rbeps_5/3\big\} \subseteq
\\
\big\{ \sum_{k=C_3 \rbeps_2 n^{1/3}}^{n} v_{n,k}(\tau_n)+
r_n(\tau_n) \geq C_2 \rbeps_2 \big\}
\end{multline*}
with $C_3=(1-e^{-1})/6$ and $C_2=(1-e^{-1})/12$.
But $\sum_{k=C_3\rbeps_2 n^{1/3}}^{n} v_{n,k}(t)+ r_n(t)$
increases with time, from which \eqref{we_show_this} follows.
\end{proof}

\begin{lemma}
\label{lemma:if_almost_giant_then_big_fire_soon}
There are constants $C_4<+\infty$, $C_5>0$ such that if
\[
\sum_{k=C_3 \rbeps_2 n^{1/3}}^{n} v_{n,k}(0)\geq
{C_2\rbeps_2}/{2}
\]
for all $n$ then with
\begin{equation}
\label{t_bla_def}
\rbtfin_n:=C_4\rbeps_2^{-2}\big(n^{-1/3}\log(n)+(n\lambda(n))^{-1}\big)
\end{equation}
we have
\begin{equation}
\label{bla_bla_7}
\lim_{n \to \infty}
\expect{r_n(\rbtfin_n)} \geq C_5 \rbeps_2.
\end{equation}
\end{lemma}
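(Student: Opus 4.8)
The plan is to track, via a suitable generating function, the fate of the mass that starts out in components of size at least $C_3\rbeps_2 n^{1/3}$, and to show that within time $\rbtfin_n$ a fixed fraction of it must have burnt. The key observation is that a component of size $k \ge C_3\rbeps_2 n^{1/3}$ is, by mechanism (B), hit by a lightning at rate $\lambda(n)\cdot k$; since $k$ is large and coagulation only makes the components larger (once two such blobs merge they stay merged until burnt), the \emph{effective burning rate} of the ``heavy mass'' is bounded below roughly by $\lambda(n)\cdot C_3\rbeps_2 n^{1/3}$ per unit of heavy mass. Comparing this with the time scale $\rbtfin_n$ in \eqref{t_bla_def}: the product $(n\lambda(n))^{-1}\cdot \lambda(n) n^{1/3} = n^{-2/3}$ is negligible, so the relevant balance is $n^{-1/3}\log(n)\cdot \lambda(n) n^{1/3} = \lambda(n)\log(n)$, which one must arrange to be $\gg 1$ — but this is \emph{not} guaranteed by $n^{-1}\ll\lambda(n)$ alone. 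Hence the real mechanism must be different: the two competing effects are that the heavy mass either coagulates up to an (almost) giant component of size $\gtrsim n^{1-\varepsilon}$, which then burns essentially immediately because its burning rate is $\lambda(n)\cdot n^{1-\varepsilon}\gg 1$, or it burns before reaching that size. The time $n^{-1/3}\log(n)$ is exactly the time it takes, under the $1/n$-rate Erd\H os--R\'enyi-type coagulation, for components of size $n^{1/3}$ to merge into components of size comparable to $n$ (each such component acquires new neighbours at rate $\asymp k\cdot(\text{heavy mass})$, and the logarithm accounts for the geometric growth through the dyadic scales $n^{1/3}, 2n^{1/3},\dots, n$).

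Concretely, I would proceed as follows. First, define $W_n(t,x)$ and the characteristic $\rbch_n(t)$ exactly as in the proof of Lemma \ref{lemma:almost_giant_or_lot_burnt}, but now started at $\rbch_n(0) = n^{-1/3}$, so that $-V_n(0,\rbch_n(0)) = -V_n(0,n^{-1/3}) \ge C_2\rbeps_2/2$ by the hypothesis and the elementary estimate $1-\sum_{k< k_0} v_{n,k} \le -V_n(0,1/k_0)\cdot(1-e^{-1})^{-1}$ applied with $k_0 = C_3\rbeps_2 n^{1/3}$. The identity \eqref{U_infinitesimal} and the martingale/variance bounds \eqref{char_infinitesimal}, \eqref{char_variance} still hold; since along the characteristic $\rbch_n(t)\ge n^{-1/3}$ up to the stopping time $\tau_n := \min\{t:\rbch_n(t)=n^{-1/3}\}$ (wait — here $\rbch_n$ \emph{starts} at $n^{-1/3}$; instead I would use the stopping time $\sigma_n := \min\{t:\rbch_n(t)=2n^{-1/3}\}$, or simply run the argument forward in the region where $\rbch_n$ stays bounded below by a constant multiple of $n^{-1/3}$), the error terms $n^{-1}\rbch_n(t)^{-2} = \Ordo(n^{-1/3})$ are negligible. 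Then \eqref{char_rnd_int} gives, up to the relevant stopping time and on a high-probability event, $\rbch_n(t) \le n^{-1/3} + o(1) + t\cdot r_n(t) + t\cdot V_n(0,n^{-1/3})$; since $-V_n(0,n^{-1/3})\ge C_2\rbeps_2/2$, as long as $r_n$ has not yet grown to $\rbeps_2\cdot(\text{const})$ the characteristic $\rbch_n(t)$ decreases at a linear rate $\asymp \rbeps_2$ and therefore hits $0$ by time $\asymp (K\rbeps_2)^{-1}$ — wait, that reproduces Lemma \ref{lemma:almost_giant_or_lot_burnt}, not the present finer statement.

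The correct finer argument, which I would carry out, is to iterate Lemma \ref{lemma:almost_giant_or_lot_burnt} through $\Ordo(\log n)$ dyadic scales. At scale $j$ one has (with high probability) either $r_n$ has increased by a fixed amount $\asymp \rbeps_2$ — in which case we are already done — or a fixed fraction of the heavy mass has moved up into components of size $\gtrsim 2^j \cdot C_3\rbeps_2 n^{1/3}$ within an additional time $\asymp (2^j C_3\rbeps_2 n^{1/3}\cdot \rbeps_2)^{-1}\cdot(\text{const})$ (this is Lemma \ref{lemma:almost_giant_or_lot_burnt} applied with the running scale $K_j = (2^j C_3\rbeps_2 n^{1/3})^{-1}$ in place of $1/K$, and with $\rbeps_2$ replaced by the current heavy-mass lower bound, which degrades geometrically but stays $\ge C_2^j \rbeps_2$ — so one must only run finitely many, i.e. $\Ordo(\log\log\,\text{or}\,\Ordo(1))$, scales before the mass bound would degrade too far; in fact since each step keeps a \emph{constant} fraction, running $\Ordo(\log n)$ steps costs only a polynomial-in-$n^{o(1)}$... no: constant fraction per step over $\log n$ steps gives $n^{-c}$, which is fatal). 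To avoid the geometric loss I would instead run the continuous-time characteristic argument directly: show that on the event where $r_n(\rbtfin_n)\le C_5\rbeps_2$, the characteristic $\rbch_n(\cdot)$ started at $n^{-1/3}$ satisfies a differential inequality $\dot{\rbch}_n \le -c\rbeps_2\,\rbch_n + (\text{errors})$ ... no, $W_n(t,x)$ is bounded below only by $-1$, so $\dot{\rbch}_n = W_n(t,\rbch_n)$ can be as negative as $-1$, driving $\rbch_n$ to $0$ in time $\asymp n^{-1/3}$ — but that would make a \emph{giant} component (mass $\asymp 1$) appear, whose burning rate is $\lambda(n)\cdot n \gg \log(n)/n^{-1/3}$, so it burns within time $(n\lambda(n))^{-1}\ll\rbtfin_n$, contributing $r_n$-mass $\asymp 1 \ge C_5\rbeps_2$. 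The $\log n$ is inserted precisely to absorb the constant in front of $n^{-1/3}$ coming from integrating $\dot{\rbch}_n \le W_n(t,\rbch_n)$ and from the requirement $-V_n(0,n^{-1/3})\ge C_2\rbeps_2/2$ being only a constant, not order one.

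So the steps, in order, are: (i) translate the hypothesis into $-V_n(0,n^{-1/3})\ge c\rbeps_2$; (ii) set up $W_n$, $U_n$, the characteristic $\rbch_n$ from $n^{-1/3}$ and the martingale $\wt\rbu_n$, inheriting \eqref{U_infinitesimal}–\eqref{char_variance}; (iii) define the good event $A_n$ (negligibility of $\wt\rbu_n$ and of $\int\rbu_n$ up to the appropriate stopping time) and the bad event $B_n := \{r_n(\rbtfin_n) > C_5\rbeps_2\}$ — on $B_n$ the conclusion \eqref{bla_bla_7} holds trivially; (iv) on $A_n\setminus B_n$, use \eqref{char_rnd_int} to show $\rbch_n$ reaches a scale $\rbch_n = n^{-\beta}$ with $\beta$ close to $0$ (equivalently $-V_n(t,\cdot)$ concentrated on components of size $n^{1-o(1)}$, hence a fixed fraction of mass $\asymp\rbeps_2$ in a component of size $\ge \delta n$ for some $\delta = \delta(n)$ with $\delta n \to\infty$ fast) \emph{within time} $\asymp n^{-1/3}\log n$ — here the $\log n$ is what lets $\rbch_n$ traverse from $n^{-1/3}$ down through $n^{-1/2}, n^{-2/3},\dots$ since the linear-decay rate is only $c\rbeps_2$, and one genuinely needs $\rbtfin_n \gtrsim n^{-1/3}\log n$; (v) once such a near-giant component exists, its lightning rate is $\ge \lambda(n)\delta n$, so within an additional time $\asymp (\lambda(n) n)^{-1}$ — which is $\le \rbtfin_n$ by the second term in \eqref{t_bla_def} — it burns, forcing $r_n$ to jump by $\ge \delta\cdot\asymp\rbeps_2$; take $C_5$ accordingly and let $n\to\infty$, using $\prob{A_n}\to 1$. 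The main obstacle, and the step requiring the most care, is (iv)–(v): controlling the interplay between the characteristic $\rbch_n$ being pushed to small scales by coagulation and the need that \emph{before} a macroscopic burn happens $r_n$ has stayed small, so that the forcing term $t\,r_n(t)$ in \eqref{char_rnd_int} does not prevent $\rbch_n$ from descending; together with verifying that a component of near-linear size indeed burns before the allotted time with overwhelming probability (a straightforward Poisson-tail estimate, but one must be sure the component is not fragmented by a burn of a \emph{different} part of the system first — handled by working with the heavy mass as a whole and the monotonicity of $r_n + \sum_{k\ge k_0} v_{n,k}$ used at the end of the proof of Lemma \ref{lemma:almost_giant_or_lot_burnt}).
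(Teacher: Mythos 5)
There is a genuine gap, and it sits exactly at your step (iv). You propose to rerun the generating--function/characteristic machinery of Lemma \ref{lemma:almost_giant_or_lot_burnt} with the characteristic started at $\rbch_n(0)=n^{-1/3}$ and pushed down to scales of order $n^{-\beta}$ with $n^{-\beta}$ close to $1/n$ (note: you write ``$\beta$ close to $0$'', but to detect components of size $n^{1-o(1)}$ via the generating function you need $x\approx n^{-(1-o(1))}$, i.e.\ $\beta$ close to $1$). This cannot work as stated: the drift and variance error bounds \eqref{char_infinitesimal}, \eqref{char_variance} are of order $n^{-1}\rbch_n(t)^{-2}$, which is negligible only while $\rbch_n(t)\gg n^{-1/2}$ --- this is precisely why the paper remarks after the definition of $\tau_n$ that any exponent $\alpha<1/2$ would do, and stops the characteristic at $n^{-1/3}$. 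At the scales you need to certify a near-giant component, these fluctuation terms are polynomially large in $n$ and the whole characteristic computation is uncontrolled; no choice of stopping time fixes this within your scheme. Your derivation of the $\log n$ in \eqref{t_bla_def} is also not coherent: a characteristic decaying at linear rate $\asymp\rbeps_2$ from height $n^{-1/3}$ needs time $\asymp n^{-1/3}\rbeps_2^{-1}$ and no logarithm, and your text never produces a mechanism that genuinely requires it.

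The paper proves the lemma by a different, purely probabilistic argument that you did not find: fix a vertex $v$ in a component of size at least $\bar n:=C_3\rbeps_2 n^{1/3}$ and track $c_n(t)=n^{-1}|\cC_n(v,t)|$. Each merger of $\cC_n(v,t)$ with another unburnt heavy component increases $\log c_n$ by at least $\log(2)\bar n/(n c_n)$, and such mergers occur at rate at least $n\,c_n\,(w_n-c_n)$ where $w_n$ is the unburnt heavy mass; hence, up to the stopping time $\tau=\tau_b(v)\wedge\tau_w\wedge\tau_g$ (first burning of $v$, depletion of heavy mass to $\rbeps_6/2$, or $c_n$ exceeding $\rbeps_6/4$), the process $\log c_n(t)-\rbeps_7 n^{1/3}(t\wedge\tau)$ is a submartingale with $\rbeps_7\asymp\rbeps_2^2$. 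Optional sampling plus $\log c_n(0)\geq-\log n$ gives $\expect{\tau}\lesssim n^{-1/3}\rbeps_2^{-2}\log n$ --- this multiplicative growth through $\asymp\log n$ scales is the true origin of the $\log n$ in \eqref{t_bla_def}. If $\tau=\tau_g$, the tagged component is macroscopic and burns within expected time $\asymp(n\lambda(n))^{-1}\rbeps_2^{-1}$ (the second term of \eqref{t_bla_def}); if $\tau=\tau_w$, mass $\rbeps_6/2$ has already burnt; and linearity of expectation over the tagged vertices converts $\prob{\tau_b(v)\leq\rbtfin_n}$ into a lower bound on $\expect{r_n(\rbtfin_n)}$. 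Your steps (i)--(iii) and (v) gesture at the right ingredients (hypothesis as a tail bound, the near-giant burning on time scale $(n\lambda(n))^{-1}$, monotonicity/bookkeeping of burnt mass), but without a substitute for the submartingale step the proof does not go through.
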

\begin{remark*}
The upper bound \eqref{t_bla_def} is technical: on one hand it is not optimal,
 on the other hand, for the proof of
Lemma \ref{lemma:if_big_mass_big_fire_soon} we only need $\rbtfin_n \ll 1$ as $n \to \infty$.
\end{remark*}
\begin{proof}
If $v$ is a vertex of the graph $G(n,t)$ let $\cC_n(v,t)$ denote
the connected component of $v$ at time $t$. Denote by $\tau_b(v)$
the first burning time of $v$:
\[ \tau_b(v):= \inf \{ t \, : \,
\abs{\cC_n(v,t_+)}<\abs{\cC_n(v,t_-)} \} \] Of course
$\abs{\cC_n(v,\tau_b(v)_+)}=1$. Define $\bar{n}:= C_3 \rbeps_2 n^{1/3}$ and
 \[ \cH_n(t):= \{ v  \, : \,
 \abs{\cC_n(v,0)} \geq \bar{n} \, \text{ and }\, \tau_b(v)>t  \} \]

Fix a vertex $v \in \cH_n(0)$.
\begin{align*}
c_n(t)&:=\frac{1}{n} \abs{\cC_n(v, (t \wedge \tau_b(v))_- )}\\
w_n(t)&:=
\frac{1}{n}\abs{\cH_n(t)} \\
z_n(t)&:=\frac{1}{n} \sum_{w \in \cH_n(0)}
\ind{\tau_b(w) \leq t}=
w_n(0)-w_n(t)
\end{align*}
 Thus $c_n(t)$ is an increasing process (we "freeze" $c_n(t)$ when it burns).
  We consider the right-continuous versions of the processes $c_n(t), w_n(t), z_n(t)$.
\[w_n(0) \geq {C_2\rbeps_2}/{2}
 =:\rbeps_6.\]
  We are going to prove that there are constants $C_4<+\infty$,
$C_5>0$ such that
\begin{equation}
\label{we_show_this_2}
\lim_{n \to \infty} \expect{z_n(\rbtfin_n)}
\geq C_5 \rbeps_2
\end{equation}
which implies \eqref{bla_bla_7}.

Define the stopping times
\begin{align*}
 \tau_w&:= \inf \{t: w_n(t) < {\rbeps_6}/{2}
\} \\
\tau_g&:= \inf \{t: c_n(t) > {\rbeps_6}/{4} \} \\
\tau&:=\tau_b(v) \wedge \tau_w \wedge \tau_g
 \end{align*}
 Since $v \in \cH_n(0)$ we have
 \[c_n(t) \geq c_n(0) =\frac{\abs{\cC_n(v,0)}}{n} \geq \frac{\bar{n}}{n}
 \]
 If $\cC_n(v,t)$ is
 connected to a vertex in $\cH_n(t)$ by a new edge at time $t$
 then
\[
   c_n(t_+)-c_n(t_-) \geq \frac{\bar{n}}{n}, \quad \log(c_n(t_+))-
   \log(c_n(t_-))\geq \log \left( 1+ \frac{\bar{n}}{n c_n(t_-)} \right) \geq
   \frac{\log(2)\bar{n}}{n c_n(t_-)}
   \]
\begin{multline*}
L \, \log(c_n(t)) \geq
\frac{\log(2) \bar{n}}{n c_n(t)} \lim_{dt \to 0}\frac{1}{dt}
\condprob{c_n(t+dt)-c_n(t)\geq \frac{\bar{n}}{n} }{\cF_t}\geq \\
\frac{\log(2)\bar{n}}{n c_n(t)}\cdot
\frac{1}{n}\abs{\cC_n(v,t)}\left(\abs{\cH_n(t)}-
\abs{\cC_n(v,t)}\right)\ind{t\ \leq \tau_b(v)} \geq
\log(2)\bar{n} \cdot \left( w_n(t)-c_n(t)\right) \ind{t\ \leq \tau_b(v)} \geq \\
 \log(2)\bar{n}\frac{\rbeps_6}{4}\ind{t \leq \tau}= n^{1/3} \frac{\log(2)}{8} \cdot C_2 \cdot C_3 \cdot
 (\rbeps_2)^2\cdot \ind{t \leq \tau}=:n^{1/3} \rbeps_7 \ind{t \leq \tau}
\end{multline*}
Thus $\log(c_n(t))- \rbeps_7 \cdot n^{1/3} (t \wedge \tau)$ is a submartingale. Using the
 optional sampling theorem we get
\[- \rbeps_7 \cdot n^{1/3} \expect{\tau}\geq \expect{\log(c_n(\tau))} - \rbeps_7 \cdot n^{1/3} \expect{\tau} \geq \log(c_n(0)) \geq
 -\log(n)\]
By Markov's
 inequality  we obtain that for some constant $C<+\infty$
\[
\prob{\tau \leq C n^{-1/3} \rbeps_2^{-2} \log(n)} \geq \frac12
\]

If $\tau_g \leq \tau_b(v) \wedge \tau_w$, then $\cC_n(v,\tau_g)
>\frac{\rbeps_6}{4} n $, so $\expect{\tau_b(v) -\tau_g} \leq
(n\lambda(n))^{-1} \frac{4}{\rbeps_6}$, which implies
\[
\prob{\tau_w \wedge \tau_b \leq C n^{-1/3} \rbeps_2^{-2} \log(n) +
C^{\prime} (n\lambda(n))^{-1} \rbeps_2^{-1}} \geq \frac{1}{4}.
\]
for some constant $C^{\prime}$. We define  $\rbtfin$ of
\eqref{t_bla_def} with $C_4:=\max\{C,C^{\prime} \}$. Using the
linearity of expectation we get
\[\expect{z_n(\rbtfin)} = \expect{\frac{1}{n} \sum_{w \in \cH_n(0)}
\ind{\tau_b(w) \leq \rbtfin}} \geq \rbeps_6 \prob{\tau_b(v) \leq
\rbtfin}.
\]
The inequality
 $\ind { \tau_w \leq \rbtfin }\frac{\rbeps_6}{2}  \leq
 z_n(\rbtfin)$ follows from the definition of $\tau_w$.
\[
\frac{1}{4} \leq \prob{\tau_w \wedge \tau_b \leq \rbtfin} \leq
\prob{ \tau_w \leq \rbtfin} + \prob{\tau_b \leq \rbtfin} \leq
\expect{z_n(\rbtfin)} \frac{2}{\rbeps_6}+ \expect{z_n(\rbtfin)}
\frac{1}{\rbeps_6}
\]
From this \eqref{we_show_this_2} follows.
\end{proof}

\section{The critical equation}
\label{section:criteq}

\subsection{Elementary properties}
\label{subsection:elementary_proeprties}

Existence to the solutions of \eqref{smoluk2}, \eqref{inicond2}
with initial condition satisfying $m_2(0)<+\infty$ and
 boundary condition
\begin{equation}
\label{conserv2}
\sum_{k=1}^{\infty}v_k(t)\equiv 1
\end{equation}
 follows as corollary to Propositions
\ref{propo:weaklimit} and \ref{propo:conservative}: indeed for
any initial condition $\vct{v}_0\in\cV_1$ we can prepare a
sequence of initial conditions of the random graph problem such
that \eqref{inilim} holds as $n\to\infty$ (we do not need to
assume convergence of $m_{n,2}(0)$ to $m_2(0)$). If $n^{-1}
\ll \lambda(n) \ll 1$ then any weak limit of the probability
measures $\P_n$  is concentrated on a subset of FFFs which
generate a FFE satisfying \eqref{smoluk2}, \eqref{conserv2}.

Moreover it is easily seen that \eqref{conserv2} implies that
$r(\cdot)$ must be continuous, and
for $k\ge 2$, the functions $t\mapsto v_k(t)$ solving
\eqref{smoluk2} are differentiable. Thus $\vct{v}(\cdot)$ solves
\eqref{smolu2}, \eqref{nogel}.

Note that assuming that $\vct{v}(\cdot) \in \cE_{\vct{v}_0}[0,T]$
is a solution of \eqref{smolu2},\eqref{nogel}
 one can deduce only from these equations that
\eqref{smoluk2} holds with a control function $r(\cdot)$
satisfying \eqref{papri}: one has to define a FFF using
\eqref{smoluk>1} and $q_{k,\infty}(\cdot) \equiv 0$: plugging
$\theta(t)\equiv 0$ into \eqref{ev_of_v_infty_in_flow} we can see
that the function $r(\cdot)$ is increasing.

Taking the generating function of a solution of \eqref{smoluk2},
\eqref{inicond2}, \eqref{conserv2} with initial condition
satisfying $m_2(0)<+\infty$ we get a solution of \eqref{burgers2},
\eqref{burinicond2} satisfying the boundary condition $V(t,0)\equiv0$.

In this case
 the increasing function $t\mapsto r(t)$ is absolutely
continuous with respect to Lebesgue measure: its Radon-Nykodim
derivative  $\dot r(t)=\varphi(t)$ is a.e. bounded in
compact domains:

Taking the limit $x\to0$ in \eqref{burgers2} and using
\eqref{Ebound}, \eqref{aprioribound1} (which holds because
  $V(t,0)\equiv0$)    we find
\begin{equation}
\label{pint}
r(t_2)-r(t_1) = \lim_{x\to0}\frac12
\int_{t_1}^{t_2}V(s,x)V^{\prime}(s,x) ds \le C\cdot (t_2-t_1).
\end{equation}
Thus in the sequel we assume given a solution of the \emph{critical Burgers control problem}
\begin{align}
\label{burgers2_diff}
& \dot{V}(t,x)=-V'(t,x)V(t,x)+e^{-x} \varphi(t),
\\[8pt]
\label{burboco2}
&
V(t,0)\equiv0
\\[8pt]
\label{burinicond2_diff}
&
V(0,x)=V_0(x)
\end{align}
where $\varphi(t)$ is nonnegative and bounded on $[0,T]$, and $V(t,x)$ is of the form \eqref{gfdef}.

\begin{lemma}
\label{lemma:infinitederivative}
For any solution of
\eqref{burgers2_diff}, \eqref{burinicond2_diff}, \eqref{burboco2} with
$V^{\prime\prime}(0)<+\infty$ and for any $t\ge T_{\text{gel}}$ (see \eqref{def_gel_time}) we have
$V^{\prime}(t,0):=\lim_{x\to0}V^{\prime}(t,x)=-\infty$.
\end{lemma}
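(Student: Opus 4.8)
\noindent\emph{Proof plan.} I argue by contradiction: suppose $V'(t_0,0):=\lim_{x\to0}V'(t_0,x)$ is finite for some $t_0\ge T_{\text{gel}}$, and write $m_1(s):=-V'(s,0^{+})=\sum_k k\,v_k(s)\in[1,+\infty]$ for the first moment, so that $m_1(0)=1/T_{\text{gel}}$ by \eqref{def_gel_time} and, by assumption, $c_0:=m_1(t_0)<\infty$. First I would follow the characteristic $\xi:=\xi_{t_0,0}$ ending at $(t_0,0)$, i.e. the solution of \eqref{charaint}--\eqref{charadiff} with $(t,x)=(t_0,0)$, which exists by the same fixed point argument since $V(t_0,0)=0$ and $\dot r=\varphi$ is bounded. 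Differentiating the Burgers equation \eqref{burgers2_diff} in $x$ and evaluating along $\xi$ shows that $h(s):=-V'(s,\xi(s))$ satisfies
\begin{equation*}
\dot h(s)=h(s)^2+e^{-\xi(s)}\varphi(s)\ \ge\ h(s)^2,\qquad s\in[0,t_0],
\end{equation*}
so $h$ is non-decreasing, $h(s)\le h(t_0)=c_0$ on $[0,t_0]$, and $h(0)^{-1}\ge h(t_0)^{-1}+t_0>t_0$. Hence $h(0)=-V'(0,\xi(0))<1/t_0\le m_1(0)=-V'(0,0)$, and since $x\mapsto-V'(0,x)$ is strictly decreasing this forces $\xi(0)>0$. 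Then $\xi(0)=\int_0^{t_0}u\,e^{-\xi(u)}\varphi(u)\,du>0$ (from \eqref{charaint}, using $V(t_0,0)=0$ and $dr=\varphi\,du$), so $\varphi>0$ on a subset of $(0,t_0)$ of positive Lebesgue measure.

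The second ingredient is the claim that $\varphi(s)=0$ for a.e.\ $s$ with $V'(s,0)>-\infty$. To prove this I would integrate \eqref{burgers2} in time at a fixed $x>0$ and let $x\to0$: because $V(t,0)\equiv0$ the boundary terms vanish and $e^{-x}\to1$, while the integrand $V(s,x)V'(s,x)$ is non-negative and bounded above by $E^{*}(s,x)\le E^{*}(s,\bar x)$ via \eqref{aprioribound1} and Lemma~\ref{lemma:E} (here $V''(0)<\infty$ is exactly what Lemma~\ref{lemma:E} needs). On $\{s:V'(s,0)>-\infty\}$ this product tends to $0\cdot V'(s,0)=0$, whereas on the complement Lemma~\ref{lemma:calculus} together with Lemma~\ref{lemma:E} keeps it between two positive constants. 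Dominated convergence then gives, for all $t_1<t_2$,
\begin{equation*}
\int_{t_1}^{t_2}\varphi(s)\,ds=\lim_{x\to0}\int_{t_1}^{t_2}V(s,x)V'(s,x)\,ds\ \le\ C\,\bigl|\{s\in[t_1,t_2]:V'(s,0)=-\infty\}\bigr|,
\end{equation*}
whence the claim follows by a standard density argument. In particular, at a.e.\ $s$ with $\varphi(s)>0$ the hypothesis \eqref{infinitederivative} of Lemma~\ref{lemma:calculus}(2) is satisfied.

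To close the argument, set $s_1:=\operatorname{ess\,sup}\{u\in(0,t_0):\varphi(u)>0\}$, which by the previous step lies in $(0,t_0]$ and satisfies $\varphi=0$ a.e.\ on $(s_1,t_0)$. From \eqref{charaint} this yields, for $s<s_1$,
\begin{equation*}
\xi(s)=\int_{s}^{s_1}(u-s)e^{-\xi(u)}\varphi(u)\,du\ \le\ (s_1-s)\,r(t_0)\ \le\ C\,(s_1-s)\ \longrightarrow\ 0\qquad(s\to s_1^{-}),
\end{equation*}
and $\xi(s)>0$ for $s<s_1$ (positivity of the integrand on a set of positive measure). Choose $u_n\uparrow s_1$ with $\varphi(u_n)>0$; then $V'(u_n,0)=-\infty$, so \eqref{trueboundvprime} and Lemma~\ref{lemma:E} produce a constant $c'>0$ with
\begin{equation*}
h(u_n)=-V'(u_n,\xi(u_n))\ \ge\ c'\,\xi(u_n)^{-1/2}\ \xrightarrow[n\to\infty]{}\ +\infty,
\end{equation*}
contradicting $h(u_n)\le c_0<\infty$. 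Therefore $V'(t_0,0)=-\infty$ for every $t_0\ge T_{\text{gel}}$.

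The main obstacle is the second step — showing that $\varphi$ vanishes wherever the first moment stays finite — because it hinges on interchanging $\lim_{x\to0}$ with the time integral and on pinning down the indeterminate product $V(s,x)V'(s,x)$ as $x\to0$; this is precisely where the a priori estimate $E(t,x)\asymp1$ (Lemma~\ref{lemma:E}) and the two-sided bounds of Lemma~\ref{lemma:calculus} carry the weight. One should also check that the along-characteristic identity for $h$ is legitimate in the present (left-continuous, but here absolutely continuous $r$) setting, which follows from the $\tau$-reparametrised computations already carried out around \eqref{charadiff}--\eqref{etx_tau_differential}.
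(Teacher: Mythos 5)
Your argument has a genuine gap at its load-bearing step: the claim that $h(s):=-V'(s,\xi_{t_0,0}(s))$ satisfies $\dot h=h^2+e^{-\xi}\varphi\ge h^2$ on all of $[0,t_0]$, hence is non-decreasing and bounded by $h(t_0)=c_0$. That chain-rule computation (and even the finiteness of $h$) is only justified while $\xi_{t_0,0}(s)>0$, where \eqref{trivibounds} controls $V'$ and $V''$. But in the very scenario you construct, the characteristic through $(t_0,0)$ is degenerate on a terminal stretch: since $\varphi=0$ a.e.\ on $(s_1,t_0)$, the integral equation \eqref{charaint} gives $\xi_{t_0,0}(s)\equiv 0$ for $s\in[s_1,t_0]$. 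On that stretch $h(s)=-V'(s,0)$ may be $+\infty$ and $V''(s,0)=+\infty$ (indeed $m_2$ blows up at $T_{\text{gel}}$), so the terms $V(s,\xi)V''(s,\xi)$ become $0\cdot\infty$ and neither the ODE for $h$ nor its monotonicity is established there. Consequently the two inequalities you actually use — $h(0)^{-1}\ge c_0^{-1}+t_0$ (needed to force $\xi_{t_0,0}(0)>0$) and $h(u_n)\le c_0$ for $u_n\uparrow s_1$ (the final contradiction) — both require crossing $[s_1,t_0]$, and are unproven. This is not a removable technicality: ruling out that $-V'(s,0)$ is infinite just before $t_0$ while finite at $t_0$ (i.e.\ that no "mass at infinity" appears and disappears along the degenerate characteristic) is essentially the content of the lemma, so as written the step is circular. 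A repair would have to run the comparison along characteristics $\xi_{t_0,x}$ with $x>0$, where the ODE is legitimate, and then control the limit $x\to0$ (ordering of characteristics, convergence of $\xi_{t_0,x}$ to $\xi_{t_0,0}$, etc.), which is genuinely additional work. Your steps 2 and 3 (that $\varphi=0$ a.e.\ where $V'(\cdot,0)$ is finite, via \eqref{pint}-type reasoning with \eqref{aprioribound1} and Lemma~\ref{lemma:E}; and the lower bound $-V'(u_n,\xi(u_n))\ge c'\xi(u_n)^{-1/2}$ via \eqref{trueboundvprime}) are sound.

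Note also that the paper's proof is structured precisely to avoid evaluating anything at $x=0$: it works at fixed $x>0$, uses that the upper bound of \eqref{trueboundv} and $E(t,x)\asymp1$ hold without assuming \eqref{infinitederivative}, derives the differential inequality \eqref{kkk} in which the dangerous term is absorbed, anchors it at $t=T_{\text{gel}}$ by the classical Erd\H os--R\'enyi/Smoluchowski fact \eqref{erdos_renyi_critical_exponent} (giving $-V'(T_{\text{gel}},x)\ge\widehat C x^{-1/2}$), and propagates the bound forward by a forbidden-region argument. Your proposal dispenses with that classical input altogether, which is exactly why it must confront the behaviour at $x=0$ head-on — and that is where it currently fails.
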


\begin{proof}
We actually prove that for any  $\overline{t}<\infty$,
$\overline{x}<\infty$  there exists a constant
$C=C(\overline{t},\overline{x})>0$ such that for any
$(t,x)\in[T_{\text{gel}},\overline{t}]\times(0,\overline{x}]$,
$-V^{\prime}(t,x)\ge C/\sqrt x$.

One can prove the upper bound of \eqref{trueboundv} for all
$V(x)$ satisfying $V(0)=0$ without the
assumption \eqref{infinitederivative} (the same proof works).

 From \eqref{Ebound}
 and the upper bound of \eqref{trueboundv}
it
follows that there exists a constant $\widetilde C<\infty$ such
that for $(t,x)\in[T_{\text{gel}},\overline{t}]\times
(0,\overline{x}]$
\[
E(t,x)^{-1}\le \widetilde C, \qquad\qquad -V(t,x)\le \widetilde
Cx^{1/2}. \qquad
\]
Differentiating with respect to $x$ in \eqref{burgers2_diff} we get
\begin{multline}
\label{kkk}
\frac{d}{dt} \left(-V'(t,x) \right)=
 V'(t,x)^2+V(t,x)V''(t,x)+e^{-x} \varphi(t)=\\
V'(t,x)^2 \cdot \left(1-\frac{V(t,x)V'(t,x)}{E(t,x)}\right)+
e^{-x} \varphi(t) \geq
 V'(t,x)^2 \left(1-\widetilde C^2 x^{1/2}
\cdot \left(-V'(t,x)\right) \right)
\end{multline}




 There exists a $0<\widehat C$ such that for $x\in(0,\overline{x}]$ we have
\begin{equation}\label{minus_V_prime_bound_at_gel_time}
-V^{\prime}(T_{\text{gel}},x)\ge \widehat C/\sqrt{x}
\end{equation}
 by \eqref{trueboundvprime} and \eqref{Ebound},
 since
$V^{\prime}(T_{\text{gel}},0)=-\infty \; \iff
 \; m_1(T_{\text{gel}})=+\infty$ follows from the fact that
 for $t \leq T_{\text{gel}}$ the solutions of \eqref{smolu1}
and \eqref{smolu2}+\eqref{nogel} coincide,
 and it is well-known from the theory of the Smoluchowski
coagulation equations that we have
 \eqref{erdos_renyi_critical_exponent}
 for the solution of \eqref{smolu1}.

From the differential inequality \eqref{kkk} it follows that
\begin{equation}\label{minus_V_prime_diff_ineq}
-V'(t,x) \leq \frac{1}{\widetilde C}x^{-1/2} \qquad \implies \qquad
\frac{d}{dt} \left(-V'(t,x) \right) \geq 0
\end{equation}
Let $C:=\min\{ \widehat C, \widetilde C^{-1} \}$.
 For  $(t,x)\in[T_{\text{gel}},\overline{t}]\times (0,
\overline{x}]$ the inequality
\[
-V^{\prime}(t,x)\ge C/\sqrt{x}.
\]
follows from \eqref{minus_V_prime_bound_at_gel_time} and
\eqref{minus_V_prime_diff_ineq} by a ``forbidden region''-argument.


\end{proof}

Summarizing: from Lemmas \ref{lemma:calculus}, \ref{lemma:E},
\ref{lemma:infinitederivative} and \eqref{pint} it
follows

\begin{lemma}
\label{lemma:summ}
For
$(t,x)\in[T_{\text{gel}},\overline{t}]\times(0,\overline{x}]$
\begin{align}
\label{vbound}
-V(t,x) & \asymp x^{1/2},
\\[8pt]
\label{vprimebound}
-V^{\prime}(t,x) & \asymp x^{-1/2},
\\[8pt]
\label{vprimeprimebound}
\phantom{-} V^{\prime\prime}(t,x) &
\asymp x^{-3/2},
\\[8pt]
 \label{vvprimebound} \phantom{-} V(t,x) V^{\prime}(t,x) & \asymp
1,
\\[8pt]
\label{pdotbound}
\phantom{-} \varphi(t) & \asymp 1.
\end{align}
\end{lemma}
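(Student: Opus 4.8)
The plan is to assemble the statement from Lemmas \ref{lemma:calculus}, \ref{lemma:E}, \ref{lemma:infinitederivative} together with \eqref{pint}; the only genuine task is to make sure every $\asymp$-constant is uniform over the strip $[T_{\text{gel}},\overline{t}]\times(0,\overline{x}]$. First I would record the setup. Since $V(t,\cdot)$ is the generating function of a solution of the \emph{critical} Burgers control problem, the boundary condition \eqref{burboco2} forces $V(t,0)\equiv 0$, i.e.\ $\vct{v}(t)\in\cV_1$ for every $t$, and $V$ is of the form \eqref{gfdef} with $V^{\prime\prime}(0)=m_2(0)<+\infty$. Hence Lemma \ref{lemma:infinitederivative} applies and gives $V^{\prime}(t,0):=\lim_{x\to0}V^{\prime}(t,x)=-\infty$ for every $t\ge T_{\text{gel}}$. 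Thus, for each fixed $t\in[T_{\text{gel}},\overline{t}]$, the profile $V(t,\cdot)$ satisfies the hypotheses of Lemma \ref{lemma:calculus}, in particular condition \eqref{infinitederivative}.

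Next I would feed in the estimate on $E$. By Lemma \ref{lemma:E} (using $m_2(0)<+\infty$) there are constants $0<c\le C<\infty$, depending only on the initial data and on $(\overline{t},\overline{x})$, with $c\le E(t,y)\le C$ for all $(t,y)\in[0,\overline{t}]\times(0,\overline{x}]$. Taking the infimum resp.\ supremum over $y\in(0,x]$ yields $E_*(t,x)\in[c,C]$ and $E^*(t,x)\in[c,C]$, i.e.\ $E_*(t,x)\asymp E^*(t,x)\asymp 1$ uniformly on the strip. I would then plug these bounds into \eqref{trueboundv}, \eqref{trueboundvprime}, the corresponding two-sided bound for $V^{\prime\prime}$, and \eqref{trueboundvvprime} of Lemma \ref{lemma:calculus}; since $c,C$ do not depend on $t$, this immediately produces \eqref{vbound}, \eqref{vprimebound}, \eqref{vprimeprimebound} and \eqref{vvprimebound}.

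For \eqref{pdotbound} I would let $x\to0$ in the critical Burgers equation \eqref{burgers2_diff} and use $\dot V(t,0)=0$ (a consequence of $V(t,0)\equiv0$) to identify the control density as $\varphi(t)=\lim_{x\to0}V(t,x)V^{\prime}(t,x)$ — precisely the relation recorded in \eqref{pint}. Combining this with the two-sided bound $c\le E_*(t,x)\le V(t,x)V^{\prime}(t,x)\le E^*(t,x)\le C$ coming from \eqref{trueboundvvprime} and letting $x\to0$ gives $c\le\varphi(t)\le C$, i.e.\ $\varphi(t)\asymp 1$, which is the last assertion.

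I do not expect a serious obstacle: the argument is essentially bookkeeping, and the two points needing attention are both mild. (i)~\emph{Uniformity}: Lemma \ref{lemma:calculus} is stated pointwise in $t$ with $t$-dependent constants $E_*(t,x),E^*(t,x)$, so one must quote the uniform version of $E\asymp 1$ from Lemma \ref{lemma:E} (valid on the full strip $[0,\overline{t}]\times(0,\overline{x}]$), not merely a pointwise one. (ii)~\emph{Range of $t$}: the lower bounds in \eqref{trueboundv}--\eqref{trueboundvvprime}, and hence the lower bound on $\varphi(t)$, rest on $V^{\prime}(t,0)=-\infty$, which Lemma \ref{lemma:infinitederivative} supplies only for $t\ge T_{\text{gel}}$; this is exactly why Lemma \ref{lemma:summ} is stated on $[T_{\text{gel}},\overline{t}]$ rather than on all of $[0,\overline{t}]$.
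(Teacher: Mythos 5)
Your proposal is correct and follows essentially the same route as the paper, which obtains Lemma \ref{lemma:summ} precisely by combining Lemma \ref{lemma:calculus} (with hypothesis \eqref{infinitederivative} supplied by Lemma \ref{lemma:infinitederivative} for $t\ge T_{\text{gel}}$), the uniform bound $E\asymp1$ of Lemma \ref{lemma:E}, and \eqref{pint} for $\varphi$. The only cosmetic remark is that for \eqref{pdotbound} it is cleaner to use the integrated relation \eqref{pint} together with the two-sided bound $E_*\le VV'\le E^*$ to get a two-sided Lipschitz bound on $r$ (hence $\varphi\asymp1$ a.e.), rather than asserting the pointwise limit $\varphi(t)=\lim_{x\to0}V(t,x)V'(t,x)$, whose existence is only established later in Lemma \ref{lemma:cont}.
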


\subsection{Bounds on $E^{\prime}$}
\label{subsection:bounds_on_E_prime}

In this subsection we assume given a solution of \eqref{burgers2_diff},
\eqref{burboco2}, \eqref{burinicond2_diff} satisfying
$\abs{V^{\prime\prime\prime}(0,0)}<+\infty$.
All of the results
of the previous subsection are valid for $V(t,x)$.

\begin{lemma}
\begin{equation}
\label{iniEprimebound}
E^{\prime}(T_{\text{gel}},x) = \Ordo(x^{-1/2})
\end{equation}
\end{lemma}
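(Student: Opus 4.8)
The plan is to work in the Lagrangian variable associated with the inviscid Burgers flow, in which the singular behaviour at $T_{\text{gel}}$ becomes a $C^{3}$ reparametrisation of the initial data, so that the $x^{-1/2}$ blow-up of $E'$ is produced solely by the degeneracy of the change of variables. First I would record that on $[0,T_{\text{gel}}]$ the solution coincides with the one of the unconstrained Smoluchowski system \eqref{smolu1} (as recalled in the proof of Lemma \ref{lemma:infinitederivative}); consequently the control function $r(\cdot)$, hence $\varphi$, vanishes there (otherwise the equation for $v_1$ would not match \eqref{smolu1}), and \eqref{burgers2_diff} reduces to the inviscid Burgers equation $\dot V+VV'=0$. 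Along the characteristic \eqref{charadiff} issued from $(T_{\text{gel}},x)$ the value $V\bigl(s,\xi_{T_{\text{gel}},x}(s)\bigr)$ is then constant, the curve is the straight segment $\xi_{T_{\text{gel}},x}(s)=x+(s-T_{\text{gel}})V(T_{\text{gel}},x)$, and writing $y:=\xi_{T_{\text{gel}},x}(0)=x+T_{\text{gel}}\bigl(-V(T_{\text{gel}},x)\bigr)>0$ gives the implicit representation
\[
x=y+T_{\text{gel}}V_{0}(y),\qquad V(T_{\text{gel}},x)=V_{0}(y).
\]
Hence, if $X$ and $Y_{0}$ denote the inverse functions of $x\mapsto -V(T_{\text{gel}},x)$ and $y\mapsto -V_{0}(y)$ respectively, then $X(u)=Y_{0}(u)-T_{\text{gel}}u$ for $u$ in a right-neighbourhood of $0$, so $X''\equiv Y_{0}''$ and $X'''\equiv Y_{0}'''$ there.

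Next I would differentiate. By \eqref{inverse_fun_second_deri}, $E(T_{\text{gel}},x)=1/X''\bigl(-V(T_{\text{gel}},x)\bigr)=1/Y_{0}''(u)$ with $u:=-V(T_{\text{gel}},x)$; since $\tfrac{d}{dx}u=-V'(T_{\text{gel}},x)$, the chain rule gives
\[
E'(T_{\text{gel}},x)=\frac{Y_{0}'''(u)}{Y_{0}''(u)^{2}}\,V'(T_{\text{gel}},x)=Y_{0}'''(u)\,V'(T_{\text{gel}},x)\,E(T_{\text{gel}},x)^{2}.
\]
Because $\vct{v}(0)\in\cV_{1}$ we have $V_{0}(0)=0$ and $(-V_{0})'(0)=m_{1}(0)\ge1>0$, and the hypothesis $\lvert V'''(0,0)\rvert<\infty$ says $m_{3}(0)<\infty$, so $-V_{0}$ is of class $C^{3}$ on a neighbourhood of $0$; by the inverse function theorem $Y_{0}$ is of class $C^{3}$ on some interval $[0,u_{0}]$, whence $\sup_{[0,u_{0}]}\lvert Y_{0}'''\rvert<\infty$. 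Since $-V(T_{\text{gel}},x)\to0$ as $x\to0$, for $x$ in a small right-neighbourhood of $0$ we have $u\le u_{0}$; combining this with the already established estimates $E(T_{\text{gel}},x)\asymp1$ and $-V'(T_{\text{gel}},x)\asymp x^{-1/2}$ (Lemma \ref{lemma:summ}) yields $\lvert E'(T_{\text{gel}},x)\rvert=\Ordo(x^{-1/2})$ there. On the complementary compact subset of $(0,\overline{x}]$ the function $E'(T_{\text{gel}},\cdot)$ is continuous, hence bounded, and the bound $\Ordo(x^{-1/2})$ holds trivially.

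I expect the main obstacle to be the bookkeeping at $t=T_{\text{gel}}$: justifying $r\equiv0$ (equivalently $\varphi\equiv0$) on $[0,T_{\text{gel}}]$ so that the inviscid-Burgers characteristic computation is licit, and checking that the characteristics arriving at small $x$ at time $T_{\text{gel}}$ do emanate from small $y$, so that the $C^{3}$ neighbourhood of $0$ is the relevant one (here one uses that $y=x+T_{\text{gel}}(-V(T_{\text{gel}},x))\to0$ as $x\to0$). There is no hidden analytic difficulty beyond this; I would remark that a direct attack on $E'=-3V'^{2}+V'^{3}V'''/(V'')^{2}$ would be unpleasant, since the two leading $\Ordo(x^{-3})$ contributions to $3(V'')^{2}-V'V'''$ cancel, and passing to the Lagrangian variable $y$ is precisely what makes this cancellation automatic.
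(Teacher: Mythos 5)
Your proposal is correct and follows essentially the same route as the paper: it exploits $\varphi\equiv 0$ on $[0,T_{\text{gel}}]$ to reduce to inviscid Burgers, expresses the inverse function of $-V(T_{\text{gel}},\cdot)$ as the initial inverse function shifted by $T_{\text{gel}}u$ (the paper writes this as $X(t,u)=X(0,u)-tu$, you obtain it from the straight characteristics), and then differentiates \eqref{inverse_fun_second_deri} to get $E'(T_{\text{gel}},x)=E(T_{\text{gel}},x)^2\,X'''(0,-V(T_{\text{gel}},x))\,V'(T_{\text{gel}},x)$, concluding via $E\asymp 1$, $-V'\asymp x^{-1/2}$ and boundedness of the third derivative of the inverse initial generating function (from $m_1(0)>0$, $m_3(0)<\infty$). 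Your justification of the $\Ordo(1)$ bound on $X'''$ via the inverse function theorem and your remark on the cancellation in a direct attack are consistent with, and slightly more explicit than, the paper's argument.
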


\begin{proof}
 We consider the function $X(t,u)$ defined for every $t$ as in the proof
 of Lemma \ref{lemma:calculus}. $X'''(0,u)=\Ordo(1)$ for
$u \in \lbrack 0 ,\bar{u} \rbrack$ by $m_1(0)>0$ and
$m_3(0)<+\infty$. For $t\leq T_{\text{gel}}$ we have $\varphi(t) \equiv 0$ thus
 $V(t,x)$ satisfies
the Burgers equation
\[
\dot{V}(t,x)+V(t,x)V'(t,x)=0
\]
 from which
\[
X(t,u)=X(0,u)-tu
\]
follows. Differentiating \eqref{inverse_fun_second_deri} with respect to $x$ we get
\[
E'(T_{\text{gel}},x)=E(T_{\text{gel}},x)^2 X'''(0,-V(T_{\text{gel}},x))V'(T_{\text{gel}},x).
\]
Now \eqref{iniEprimebound} follows from \eqref{Ebound} and
\eqref{trueboundvprime}.
\end{proof}
From now on, we consider the solution of \eqref{burgers2_diff},
\eqref{burboco2}, \eqref{burinicond2_diff}
 for $t \geq T_{\text{gel}}$,
that is we assume that $T_{\text{gel}}=0$.

Since the function $r(t)$ is continuous we get that $t(\tau)$ defined by
\eqref{tau_max_def}
 is the inverse
function of $t+r(t)$ which by \eqref{V_tau_explicit_def} implies
$\rbV(\tau,x)\equiv V(t(\tau),x)$.
Integrating \eqref{etx_tau_differential}
and using \eqref{V_tau_V_t_identity}, \eqref{change_of_variables_tau}
 we get
 for $0\le t_1\le t_2<\infty$
\begin{align}
\label{etx}
E(t_2,x)
&=
E(t_1,\xi_{t_2,x}(t_1))+ \int_{t_1}^{t_2}
\big\{ 3
\frac{V^{\prime}(s,\xi_{t_2,x}(s))^2}{V^{\prime\prime}(s,\xi_{t_2,x}(s))}
+
\frac{V^{\prime}(s,\xi_{t_2,x}(s))^3}{V^{\prime\prime}(s,\xi_{t_2,x}(s))^2}
\big\} e^{-\xi_{t_2,x}(s)} \varphi(s)\, ds
\\[8pt]
\label{etx2}
&=
E(t_1,\xi_{t_2,x}(t_1))+ \int_{t_1}^{t_2} \big\{
-3 \frac{E(s,\xi_{t_2,x}(s))}{V^{\prime}(s,\xi_{t_2,x}(s))} +
\frac{E(s,\xi_{t_2,x}(s))^2}{V^{\prime}(s,\xi_{t_2,x}(s))^3}
\big\} e^{-\xi_{t_2,x}(s)} \varphi(s)\, ds.
\end{align}

\begin{lemma}
\label{lemma:cont}
The function $(t,x)\mapsto E(t,x)$ is continuous on the domain
$(t,x)\in[0,\overline{t}]\times[0,\overline{x}]$, and
\begin{equation}
\label{phiisvvprime}
\varphi(t)=\lim_{x\to0}V^{\prime}(t,x)V(t,x)=E(t,0).
\end{equation}
\end{lemma}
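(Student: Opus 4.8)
The plan is to establish the three assertions hidden in the statement separately: (a) for every fixed $t\in[0,\overline{t}]$ the one-sided limit $E(t,0):=\lim_{x\to0}E(t,x)$ exists; (b) the resulting map $(t,x)\mapsto E(t,x)$ is jointly continuous on $[0,\overline{t}]\times[0,\overline{x}]$; and (c) $E(t,0)=\lim_{x\to0}V'(t,x)V(t,x)=\varphi(t)$. Off the boundary, i.e. on $[0,\overline{t}]\times[\delta,\overline{x}]$ for any $\delta>0$, there is nothing to do: $V(t,x)=\sum_k v_k(t)e^{-kx}-1$ together with its $x$-derivatives are series that converge uniformly in $x\ge\delta$ of functions continuous in $t$, and $V''>0$ there, so $E=-(V')^3/V''$ is jointly continuous. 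Hence the whole matter is the behaviour as $x\to0$, and the tool is the characteristic representation \eqref{etx}--\eqref{etx2} based at $t_1=0=T_{\text{gel}}$.

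For (a) I would fix $t$ and write, using \eqref{etx2} with $t_1=0$,
\[
E(t,x)=E\big(0,\xi_{t,x}(0)\big)+\int_0^t\Big\{-3\,\frac{E}{V'}+\frac{E^2}{(V')^3}\Big\}\big(s,\xi_{t,x}(s)\big)\,e^{-\xi_{t,x}(s)}\,\varphi(s)\,ds .
\]
Since $|\dot\xi_{t,x}|=|V(s,\xi_{t,x})|\le1$ the characteristics stay in a fixed compact $x$-interval (enlarging $\overline{x}$ if necessary), and by \eqref{Ebound}, \eqref{vprimebound}, \eqref{pdotbound} the integrand is $\Ordo\big(\xi_{t,x}(s)^{1/2}\big)$, uniformly bounded and tending to $0$ wherever $\xi\to0$. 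The curves $x\mapsto\xi_{t,x}(\cdot)$ are nested and decrease, as $x\downarrow0$, to the maximal backward solution $\widehat\xi_t(\cdot)$ of $\dot\xi=V(s,\xi)$ issuing from $(t,0)$; because $-V(s,y)\asymp y^{1/2}$ near $y=0$ by \eqref{vbound}, a comparison argument shows $\widehat\xi_t(s)\ge c\,(t-s)^2>0$ for $s\in[0,t)$, so $\widehat\xi_t$ escapes the singular point and on $[0,t-\delta]$ one has genuine Lipschitz continuous dependence, with $\xi_{t,x}\to\widehat\xi_t$ uniformly there; on $[t-\delta,t]$ the characteristics remain $\le\delta+x$ and contribute at most $C\delta^{1/2}$ to the integral, uniformly in $x$. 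Dominated convergence then yields that $\lim_{x\to0}E(t,x)$ exists and equals $E\big(0,\widehat\xi_t(0)\big)+\int_0^t\{\cdots\}\big(s,\widehat\xi_t(s)\big)e^{-\widehat\xi_t(s)}\varphi(s)\,ds$; here \eqref{iniEprimebound} is invoked precisely to know that $E(0,\cdot)$ extends continuously to $0$ (needed only in the degenerate case $t=0$, when $\widehat\xi_t(0)=0$). Re-running the same estimate along an arbitrary sequence $(t_n,x_n)\to(t_0,0)$ — passing to the limit in the characteristic ODE by continuity of $V$, with the same domination near $s=t_0$ — gives $E(t_n,x_n)\to E(t_0,0)$, and combined with the off-boundary continuity this proves (b).

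For (c): since $t\ge T_{\text{gel}}=0$, Lemma \ref{lemma:infinitederivative} gives $V'(t,0)=-\infty$, so the two-sided inequality \eqref{trueboundvvprime} of Lemma \ref{lemma:calculus} applies, $E_*(t,x)\le V(t,x)V'(t,x)\le E^*(t,x)$; by the continuity of $E(t,\cdot)$ at $0$ established in (a), both outer terms tend to $E(t,0)$ as $x\to0$, hence $\lim_{x\to0}V'(t,x)V(t,x)=E(t,0)$. Finally, in the integrated Burgers equation \eqref{burgers2}, $V(t,x)-V_0(x)+\int_0^tV(s,x)V'(s,x)\,ds=e^{-x}r(t)$, let $x\to0$: the boundary values vanish ($V(t,0)=V_0(0)=0$), the integrand is dominated (by \eqref{vvprimebound}, $VV'\asymp1$) and converges pointwise in $s$ to $E(s,0)$, so $\int_0^tE(s,0)\,ds=r(t)$ for all $t$. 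As $s\mapsto E(s,0)$ is continuous by (b), it is the everywhere-defined derivative $\dot r=\varphi$, which is \eqref{phiisvvprime}.

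I expect the real obstacle to be steps (a)--(b): making the $x\to0$ passage in the characteristic integral rigorous requires simultaneously knowing that the limiting backward characteristic $\widehat\xi_t$ leaves the singular boundary point $(t,0)$ — so that ordinary continuous dependence is available on $[0,t-\delta]$ — and that its small tail near $s=t$ is harmless. Both are exactly what the sharp $\asymp$-estimates of Lemma \ref{lemma:summ} deliver; the rest is bookkeeping.
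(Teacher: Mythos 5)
Your overall skeleton is the paper's: represent $E$ along characteristics as in \eqref{etx}--\eqref{etx2}, pass to the limit $x\to0$ by dominated convergence (the paper dominates by the crude bound \eqref{etxbound}, you by the sharper $\Ordo(\xi^{1/2})$ estimate --- either works), then get $\lim_{x\to0}VV'=E(t,0)$ from \eqref{trueboundvvprime} via Lemma \ref{lemma:infinitederivative}, and finally identify $E(t,0)$ with $\varphi(t)$ by letting $x\to0$ in \eqref{burgers2} (this is exactly \eqref{pint}) together with continuity of $E(\cdot,0)$. Parts (c) and the tail-splitting in (a) are fine, as is the use of \eqref{iniEprimebound} to handle $E(0,\cdot)$ at $x=0$.

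The weak point is the joint-continuity step (b), where you ``pass to the limit in the characteristic ODE by continuity of $V$'' along an arbitrary sequence $(t_n,x_n)\to(t_0,0)$. Continuity of $V$ is not enough here: $y\mapsto V(s,y)$ is not Lipschitz at $y=0$ (it behaves like $-\sqrt{2\varphi(s)}\,y^{1/2}$ by \eqref{vbound}), the terminal datum sits exactly at this singular point, and backward solutions of $\dot\xi=V(s,\xi)$ through $(t_0,0)$ are not unique ($\xi\equiv0$ and ``delayed parabolas'' also solve it), so a subsequential Arzel\`a--Ascoli limit of the $\xi_{t_n,x_n}$ is not automatically your maximal solution $\widehat\xi_{t_0}$. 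Nor does a naive Gr\"onwall argument on $[0,t_0-\delta]$ close the gap: along the relevant curves $|V'|\asymp\xi^{-1/2}\asymp(t_0-s)^{-1}$ by \eqref{vprimebound}, characteristics spread apart going backward, and a terminal discrepancy of order $\delta$ at $s=t_0-\delta$ only yields a bound of order $t_0-s$ downstream. You must either quantify the convergence --- e.g.\ prove the uniform two-sided bound $\xi_{t,x}(s)\asymp\big(x^{1/2}+(t-s)\big)^2$ (the paper's \eqref{xibound}) and deduce from it, not from soft ODE arguments, that the whole family converges to $\widehat\xi_{t_0}$ --- or, as the paper does, sidestep the singular ODE altogether by working with the integral equation \eqref{charaint}: there $(t,x)$ enters only through the continuous data $x$, $V(t,x)$ and the Lipschitz control $r$ (via \eqref{pdotbound}), so a fixed-point/Gr\"onwall argument gives joint continuity of $(t,x,s)\mapsto\xi_{t,x}(s)$ up to $x=0$ directly, after which your dominated-convergence step and your part (c) complete the proof exactly as in the paper.
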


\begin{proof}
From \eqref{pdotbound} and \eqref{charaint} it follows that the
characteristic curves $\xi_{t,x}(s)$ are jointly continuous in the
variables $\{(t,x,s): 0\le s \le t, \,\,\, 0\le x\}$. And hence, further on,
from \eqref{etx} and \eqref{etxbound}, by dominated convergence it
follows that $(t,x)\mapsto E(t,x)$ is jointly continuous in $\{(t,x):
0\le t, \,\,\, 0\le x\}$.
Further, from \eqref{trueboundvvprime} it follows that
\[
\lim_{x\to0}V(t,x)V^{\prime}(t,x)=
\lim_{x\to0}E(t,x)=:
E(t,0)
\]
Hence, \eqref{phiisvvprime} follows from \eqref{pint} again by
dominated convergence.
\end{proof}

\begin{lemma}
\label{lemma:holder}
$ $

\begin{enumerate}[(i)]
\item
The function $x\mapsto E(t,x)$ is H\"older-$1/2$ at $x\to0$:
\begin{equation}
\label{EHolder}
E(t,x)=\varphi(t)\big(1+\Ordo(x^{1/2})\big).
\end{equation}
\item
The function $t\mapsto\varphi(t)$ is Lipschitz continuous: there
exists a constant $C<\infty$ (which depends only on the initial
conditions  \eqref{burinicond2_diff}
 and the choice of
$\overline{t}$ such that for any
$t_1,t_2\in[0,\overline{t}]$
\begin{equation}
\label{phiLipschitz}
|\varphi(t_1)-\varphi(t_2)|\le C|t_1-t_2|.
\end{equation}
\end{enumerate}
\end{lemma}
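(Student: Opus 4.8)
The plan is to establish (i) via a uniform bound on $\px E$ and then derive (ii) from it.

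\emph{Towards (i).} Since $E(t,0)=\varphi(t)$ by \eqref{phiisvvprime} and $\varphi(t)\asymp1$ by \eqref{pdotbound}, the H\"older estimate \eqref{EHolder} is equivalent to $|E(t,x)-E(t,0)|=\Ordo(x^{1/2})$, and — using that $E(t,\cdot)$ is continuous up to $0$ by Lemma \ref{lemma:cont} — this follows by integration once one has $|\px E(t,x)|\le C\,x^{-1/2}$ uniformly on $[0,\overline t]\times(0,\overline x]$. To get this bound one tracks $\px E$ along characteristics: differentiating the characteristic identity \eqref{etx} with respect to $x$ gives, along $s\mapsto\xi_{t,x}(s)$,
\[
\frac{d}{ds}\,\px E\big(s,\xi_{t,x}(s)\big)=-V'\big(s,\xi_{t,x}(s)\big)\,\px E\big(s,\xi_{t,x}(s)\big)+\px g\big(s,\xi_{t,x}(s)\big)\,\varphi(s),\qquad g=\Big(\frac{3V'^2}{V''}+\frac{V'^3}{V''^2}\Big)e^{-x}.
\]
A priori $\px g$ contains $V'''$, which one eliminates in favour of $\px E$ through the algebraic identity $\px E=-3V'^2+V'^3V'''/V''^2$; this turns the relation into a closed linear ODE $\frac{d}{ds}\px E=A\,\px E+B$ along the characteristic whose coefficients $A,B$ are explicit in $V',V'',\varphi$ and $\xi_{t,x}(s)$. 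Lemma \ref{lemma:summ} shows that $|A|$ and $|B|$ are $\Ordo(\xi_{t,x}(s)^{-1/2})$ on the relevant domain, and — using $\xi_{t,x}(s)\ge x$ for $s\le t$ (from \eqref{charadiff} and $V\le0$) together with the sharp comparison estimates of Lemma \ref{lemma:summ} between $V,V',V''$ near $x=0$ — one runs a ``forbidden region''-type argument on a suitably normalised quantity built from $\px E$ (e.g.\ $\xi_{t,x}(s)^{1/2}\px E(s,\xi_{t,x}(s))$), with initial data supplied by \eqref{iniEprimebound} at $s=T_{\text{gel}}=0$; this propagates the bound forward in $s$ and yields $|\px E(t,x)|\le Cx^{-1/2}$ at $s=t$. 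Then $|E(t,x)-E(t,0)|\le\int_0^x|\px E(t,y)|\,dy=\Ordo(x^{1/2})$ and, with $E(t,0)=\varphi(t)\asymp1$, this is \eqref{EHolder}.

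\emph{Proof of (ii).} Fix $0\le t_1\le t_2\le\overline t$ and let $x\to0$ in \eqref{etx} — legitimate since $E$ is continuous by Lemma \ref{lemma:cont} and $x\mapsto\xi_{t_2,x}(\cdot)$ is continuous by \eqref{pdotbound} and \eqref{charaint}. Using $E(t_i,0)=\varphi(t_i)$ this gives
\[
\varphi(t_2)-\varphi(t_1)=\big(E(t_1,\xi_{t_2,0}(t_1))-\varphi(t_1)\big)+\int_{t_1}^{t_2}g\big(s,\xi_{t_2,0}(s)\big)\,\varphi(s)\,ds.
\]
From \eqref{charaint} at $x=0$, with $\varphi$ bounded and $e^{-\xi}\le1$, one gets $\xi_{t_2,0}(s)\le C(t_2-s)^2$ for $t_1\le s\le t_2$, so in particular $\xi_{t_2,0}(t_1)=\Ordo((t_2-t_1)^2)$; part (i), applied at time $t_1$, then bounds the first bracket by $\Ordo(\xi_{t_2,0}(t_1)^{1/2})=\Ordo(t_2-t_1)$. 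For the integral, Lemma \ref{lemma:summ} gives $0\le g(s,y)\le Cy^{1/2}$ (using $3V''+V'>0$), so $g(s,\xi_{t_2,0}(s))\le C(t_2-s)$ and the integral is $\Ordo((t_2-t_1)^2)$. Adding the two estimates gives \eqref{phiLipschitz}.

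\emph{Main obstacle.} The heart of the matter is the uniform bound $|\px E(t,x)|=\Ordo(x^{-1/2})$ underlying (i): the linear equation for $\px E$ along characteristics has a destabilising coefficient of size $\asymp x^{-1/2}$, so a naive Gronwall estimate blows up as $x\to0$, and one must choose the normalisation and exploit the fine estimates of Lemma \ref{lemma:summ} carefully — and do so uniformly in $t$ down to $T_{\text{gel}}$, where \eqref{iniEprimebound} supplies the initial data. Granting this, (ii) is routine calculus with the characteristics.
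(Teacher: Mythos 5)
Your reduction of (i) to the uniform bound $|\px E(t,x)|=\Ordo(x^{-1/2})$, and your proof of (ii) from (i) (letting $x\to0$ in \eqref{etx}, using $\xi_{t_2,0}(t_1)=\Ordo((t_2-t_1)^2)$ and the boundedness of the integrand), are exactly the paper's route, and that part is fine. The problem is that the central step — the bound $|\px E(t,x)|=\Ordo(x^{-1/2})$ — is precisely what you leave unproved, and the mechanism you sketch for it does not close. Your ``forbidden region'' argument for the normalised quantity $\xi_{t,x}(s)^{1/2}\px E(s,\xi_{t,x}(s))$ needs the drift produced by the destabilising coefficient $-V'(s,\xi_{t,x}(s))$ to be cancelled (up to an integrable error) by the derivative of the normalising factor, i.e.\ it needs the cancellation $\tfrac{V}{2\xi}-V'\approx 0$ along the characteristic. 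That cancellation requires the \emph{sharp} constants $-V\sim\sqrt{2\varphi}\,x^{1/2}$, $-V'\sim\sqrt{\varphi/2}\,x^{-1/2}$; but Lemma \ref{lemma:summ} only gives two-sided bounds $\asymp$ with an uncontrolled constant $C$, and with such constants the drift of your normalised quantity is still of size $\xi^{-1/2}$ with the wrong sign, so Gronwall/forbidden-region yields only $|\px E(t,x)|=\Ordo(x^{-c})$ with an unknown exponent $c$ (equivalently, the amplification factor $\exp\bigl(\int_s^t(-V')\bigr)$ is only bounded by $\bigl((x^{1/2}+(t-s))/x^{1/2}\bigr)^{C}$). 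The sharp constants are exactly the content of Proposition \ref{propo:summary}, which in the paper is a \emph{consequence} of Lemma \ref{lemma:holder}; using them here is circular. Even upgrading Lemma \ref{lemma:summ} via continuity of $E$ (Lemma \ref{lemma:cont}) only gives a $1+o(1)$ cancellation without a rate, which yields $|\px E|=\Ordo(x^{-1/2-\eps})$ for every $\eps>0$ and hence only H\"older (not Lipschitz) continuity of $\varphi$ in (ii).

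The paper closes this step differently, and the two missing ingredients are worth noting. First, a crude a priori bound $|E'(t,x)|=\Ordo(x^{-1})$ (equation \eqref{Eprimebound1}) obtained purely algebraically from $E'=-3V'^2+E\,(-V''')/V''$ together with $-\tfrac{x}{2}V'''(t,x)\le V''(t,\tfrac x2)=\Ordo(x^{-3/2})$, i.e.\ a bound on $V'''$ in terms of $V''$ — so the unknown $E'$ never has to be fed back through an exponential estimate. Second, instead of the ODE along characteristics one differentiates the \emph{integrated} identity \eqref{etx2} in $x$, giving \eqref{etxprime2}; there the amplification factor appears explicitly as $\xi'_{t,x}(s)=\exp\bigl(-\int_s^tV'(u,\xi_{t,x}(u))du\bigr)$, and it is bounded by $0\le\xi'_{t,x}(s)\le 1-V'(t,x)(t-s)=\Ordo\bigl((x^{1/2}+(t-s))x^{-1/2}\bigr)$ using monotonicity of \eqref{charaint} in $x$ (characteristics do not cross), which is lossless where Gronwall with $\asymp$-constants is not. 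Plugging the crude bound $\Ordo(y^{-1})$ into the integrand of \eqref{etxprime2} at the argument $y=\xi_{t,x}(s)\asymp(x^{1/2}+(t-s))^2$ then yields $\Ordo(x^{-1/2})$ in a single pass, with the initial term controlled by \eqref{iniEprimebound}. Without these two ingredients (or some substitute for the amplification bound), your plan for (i) has a genuine gap, and (ii) inherits it.
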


\begin{proof}
(i)
We prove $|E^{\prime}(t,x)|=\Ordo(x^{-1/2})$. In this order we shall
use the following a priori estimates
\begin{align}
\label{xibound}
\xi_{t,x}(s)
&
\asymp \big(x^{1/2}+(t-s)\big)^2
\\[8pt]
\label{xiprimebound}
\xi^{\prime}_{t,x}(s)
:=\px\xi_{t,x}(s)
&
=\Ordo\left( \big(x^{1/2}+(t-s)\big)
x^{-1/2}\right).
\end{align}
Indeed: \eqref{xibound} follows from \eqref{charaint},
\eqref{vbound} and \eqref{pdotbound}, and we get
\eqref{xiprimebound} from \eqref{vprimebound} and from the fact that
characteristics do not intersect (thus $ 0 \leq \xi^{\prime}_{t,x}(s)$)
by  differentiating \eqref{charaint} w.r.t. $x$:
\[ 0 \leq \xi^{\prime}_{t,x}(s)  \leq 1-V^{\prime}(t,x)(t-s) \]
The a priori bound
\begin{equation}
\label{Eprimebound1}
|E^{\prime}(t,x)|=\Ordo(x^{-1}).
\end{equation}
follows from
\[
E^{\prime}(t,x)=-3V^{\prime}(t,x)^2 +E(t,x)
\frac{-V^{\prime\prime\prime}(t,x)}{V^{\prime\prime}(t,x)}=
\Ordo((x^{-1/2})^2)+\Ordo(x^{-1})
\]
by \eqref{vprimebound},
\eqref{Ebound} and
\[
-\frac{x}{2} V^{\prime\prime\prime}(t,x) \leq
\int_{\frac{x}{2}}^x V^{\prime\prime\prime}(y)dy \leq
V^{\prime\prime}(\frac{x}{2})=\Ordo(x^{-3/2})
\]
using both
 the upper and lower bounds of \eqref{vprimeprimebound}.

Differentiating with respect to $x$ in \eqref{etx2} yields
\begin{align}
\label{etxprime2}
E^{\prime}(t,x)
&=
E^{\prime}(0,\xi_{t,x}(0))\xi^{\prime}_{t,x}(0)+
\\[8pt]
\notag
&\quad
+
\int_0^t
\big\{
-3
\frac{E^{\prime}(s,\xi_{t,x}(s))}{V^{\prime}(s,\xi_{t,x}(s))}
+
3
\frac{E(s,\xi_{t,x}(s))V^{\prime\prime}(s,\xi_{t,x}(s))}
{V^{\prime}(s,\xi_{t,x}(s))^2}
\\[8pt]
\notag
&\qquad\qquad
+
2
\frac{E(s,\xi_{t,x}(s))E^{\prime}(s,\xi_{t,x}(s))}
{V^{\prime}(s,\xi_{t,x}(s))^3}
-
3
\frac{E(s,\xi_{t,x}(s))^2V^{\prime\prime}(s,\xi_{t,x}(s))}
{V^{\prime}(s,\xi_{t,x}(s))^4}
\\[8pt]
\notag
&\qquad\qquad
+
3
\frac{E(s,\xi_{t,x}(s))}
{V^{\prime}(s,\xi_{t,x}(s))}
-
\frac{E(s,\xi_{t,x}(s))^2}
{V^{\prime}(s,\xi_{t,x}(s))^3}
\big\}
\xi^\prime_{t,x}(s)
e^{-\xi_{t,x}(s)} \varphi(s)ds.
\end{align}

Next using \eqref{Eprimebound1} bound we estimate the expression
of $E^{\prime}(t,x)$ given in \eqref{etxprime2}. Using
\eqref{Ebound}, \eqref{vprimebound}, \eqref{vprimeprimebound},
\eqref{iniEprimebound}, \eqref{xibound}, and \eqref{xiprimebound}
we conclude that if \eqref{Eprimebound1} holds then actually
\begin{equation}
\label{Eprimebound2}
|E^{\prime}(t,x) |=\Ordo(x^{-1/2}).
\end{equation}
The dominating order is given by the first term (outside the
integral) and the first two terms under the integral on the right
hand side of \eqref{etxprime2}.

Finally, \eqref{EHolder} follows from \eqref{phiisvvprime} and
\eqref{Eprimebound2}.

(ii)
In order to prove \eqref{phiLipschitz} we note that from \eqref{etx}
and \eqref{phiisvvprime} it follows that for $0\le t_1\le t_2\le
\overline{t}$
\begin{align*}
\varphi(t_1)-\varphi(t_2)
&
=
E(t_1,0)-E(t_1,\xi_{t_2,0}(t_1))
\\[8pt]
& \qquad\qquad - \int_{t_1}^{t_2} \big\{ 3
\frac{V^{\prime}(s,\xi_{t_2,0}(s))^2}{V^{\prime\prime}(s,\xi_{t_2,0}(s))}
+
\frac{V^{\prime}(s,\xi_{t_2,0}(s))^3}{V^{\prime\prime}(s,\xi_{t_2,0}(s))^2}
\big\} e^{-\xi_{t_2,0}(s)} \varphi(s)ds
\end{align*}
Hence, by \eqref{EHolder}, \eqref{xibound} and \eqref{etxbound} we
obtain directly \eqref{phiLipschitz}.
\end{proof}

Summarizing again,
from Lemmas \ref{lemma:calculus}, \ref{lemma:E},
\ref{lemma:infinitederivative}, \ref{lemma:cont} and
\ref{lemma:holder}
it follows

\begin{proposition}
\label{propo:summary}
For a solution of \eqref{burgers2_diff}, \eqref{burinicond2_diff}, \eqref{burboco2} with initial condition
satisfying $T_{\text{gel}}=0$, \eqref{Ebound} and
\eqref{iniEprimebound} and for
$(t,x)\in[0,\overline{t}]\times(0,\overline{x}]$
\begin{align}
 \label{vbound2} -V(t,x) & =
\sqrt{2\varphi(t)}x^{1/2}\big(1+\Ordo(x^{1/2})\big),
\\[8pt]
 \label{vprimebound2} -V^{\prime}(t,x) & =
\sqrt{\frac{\varphi(t)}{2}}x^{-1/2}\big(1+\Ordo(x^{1/2})\big),
\\[8pt]
 \label{vprimeprimebound2} \phantom{-} V^{\prime\prime}(t,x) & =
\sqrt{\frac{\varphi(t)}{8}}x^{-3/2}\big(1+\Ordo(x^{1/2})\big),
\\[8pt]
 \label{vvprimebound2} \phantom{-} V(t,x) V^{\prime}(t,x) & =
\varphi(t)\big(1+\Ordo(x^{1/2})\big).
\\[8pt]
 \label{vdotbound2} \phantom{-} \dot V(t,x) & = \Ordo(x^{1/2}),
\\[8pt]
 \label{vdotprimebound2} \phantom{-} \dot V^\prime(t,x) & =
\Ordo(x^{-1/2}),
\\[8pt]
 \label{phibound2} \varphi(t) & \asymp 1, \qquad
|\varphi(t_1)-\varphi(t_2)|\le C|t_1-t_2|.
\end{align}
\end{proposition}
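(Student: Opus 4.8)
The plan is to assemble \eqref{vbound2}--\eqref{phibound2} by feeding the sharp control of $E$ from Lemmas~\ref{lemma:E} and \ref{lemma:holder} into the inverse-function computation already carried out in the proof of Lemma~\ref{lemma:calculus}. Since we now assume $T_{\text{gel}}=0$, Lemma~\ref{lemma:infinitederivative} is available and gives $V^{\prime}(t,0)=-\infty$ for every $t\in[0,\overline t]$; hence, exactly as in the proof of Lemma~\ref{lemma:calculus}, for each fixed $t$ the inverse function $X(t,\cdot)$ of $u\mapsto -V(t,\cdot)$ satisfies $X(t,0)=0$, $X^{\prime}(t,0)=0$ and $X^{\prime\prime}(t,u)=E\big(t,X(t,u)\big)^{-1}$. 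Moreover the crude two-sided bound $-V(t,x)\asymp x^{1/2}$ of \eqref{vbound} translates into $X(t,u)\asymp u^{2}$, uniformly on $[0,\overline t]\times[0,\overline x]$, so that $\Ordo(X(t,u)^{1/2})=\Ordo(u)$ and, when $u=-V(t,x)$, $\Ordo(u)=\Ordo(x^{1/2})$.

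First I would upgrade the crude bounds to sharp asymptotics. By the H\"older-$1/2$ estimate \eqref{EHolder} together with $X(t,u)\asymp u^{2}$ we get $X^{\prime\prime}(t,u)=\varphi(t)^{-1}\big(1+\Ordo(u)\big)$, the error being uniform in $t\in[0,\overline t]$ because the constants in \eqref{EHolder} and \eqref{Ebound} are. Integrating twice from $u=0$ and using $X(t,0)=X^{\prime}(t,0)=0$ gives $X^{\prime}(t,u)=\varphi(t)^{-1}u\big(1+\Ordo(u)\big)$ and $X(t,u)=\tfrac12\varphi(t)^{-1}u^{2}\big(1+\Ordo(u)\big)$. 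Solving $x=X\big(t,-V(t,x)\big)$ for $-V(t,x)$ now yields \eqref{vbound2}; then \eqref{vprimebound2} follows from $V^{\prime}(t,x)=-X^{\prime}\big(t,-V(t,x)\big)^{-1}$; \eqref{vvprimebound2} follows either from $V(t,x)V^{\prime}(t,x)=-V(t,x)/X^{\prime}\big(t,-V(t,x)\big)$ or simply by multiplying \eqref{vbound2} and \eqref{vprimebound2}; and \eqref{vprimeprimebound2} follows from the identity $V^{\prime\prime}(t,x)=-V^{\prime}(t,x)^{3}/E(t,x)$ (immediate from the definition of $E$) together with \eqref{vprimebound2} and \eqref{EHolder}. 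Once the double integration is done these are purely algebraic, so nothing more is needed there; and $\varphi(t)\asymp 1$ in \eqref{phibound2} is just \eqref{pdotbound}.

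For the time derivatives I would differentiate the evolution equation \eqref{burgers2_diff}. Directly, $\dot V(t,x)=-V(t,x)V^{\prime}(t,x)+e^{-x}\varphi(t)$, so \eqref{vvprimebound2}, the expansion $e^{-x}=1+\Ordo(x^{1/2})$ and the boundedness of $\varphi$ give $\dot V(t,x)=\varphi(t)\,\Ordo(x^{1/2})=\Ordo(x^{1/2})$, which is \eqref{vdotbound2}. Differentiating once more in $x$, $\dot V^{\prime}(t,x)=-V(t,x)V^{\prime\prime}(t,x)-V^{\prime}(t,x)^{2}-e^{-x}\varphi(t)$; here \eqref{vbound2} and \eqref{vprimeprimebound2} give $-V(t,x)V^{\prime\prime}(t,x)=\tfrac{\varphi(t)}{2}x^{-1}\big(1+\Ordo(x^{1/2})\big)$ and \eqref{vprimebound2} gives $V^{\prime}(t,x)^{2}=\tfrac{\varphi(t)}{2}x^{-1}\big(1+\Ordo(x^{1/2})\big)$, so the two leading $x^{-1}$ singularities cancel and only an $\Ordo(x^{-1/2})$ remainder (together with the bounded term $e^{-x}\varphi(t)$) survives, which is \eqref{vdotprimebound2}. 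Finally the Lipschitz bound in \eqref{phibound2} is \eqref{phiLipschitz}.

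The only place where more than bookkeeping is needed is precisely this cancellation giving \eqref{vdotprimebound2}: it forces us to carry the \emph{exact} leading constants $\sqrt{2\varphi(t)}$, $\sqrt{\varphi(t)/2}$, $\sqrt{\varphi(t)/8}$ in \eqref{vbound2}--\eqref{vprimeprimebound2}, not merely the $\asymp$-bounds of Lemma~\ref{lemma:summ}. So the point one must be careful about is that the $\Ordo(u)$ error in $X^{\prime\prime}(t,u)=\varphi(t)^{-1}\big(1+\Ordo(u)\big)$ is genuinely uniform in $t$ over compact time intervals, which is inherited from the uniformity already present in \eqref{Ebound} and \eqref{EHolder}; granting that, everything else is elementary.
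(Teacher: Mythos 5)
Your proposal is correct and follows essentially the same route as the paper, whose ``proof'' is just the statement that the Proposition follows from Lemmas \ref{lemma:calculus}, \ref{lemma:E}, \ref{lemma:infinitederivative}, \ref{lemma:cont} and \ref{lemma:holder}: you re-run the inverse-function computation of Lemma \ref{lemma:calculus} with $E(t,\cdot)$ pinned to $\varphi(t)\big(1+\Ordo(x^{1/2})\big)$ by \eqref{EHolder}, and obtain \eqref{vdotbound2}--\eqref{vdotprimebound2} from \eqref{burgers2_diff} and its $x$-derivative, exactly as intended. The uniformity-in-$t$ point you flag is indeed the only thing to watch, and it is supplied by the uniform constants in \eqref{Ebound} and \eqref{Eprimebound2}.
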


In order to prove \eqref{largekasymp} we need Example (c) of
Theorem 4. of chapter XIII.5 of \cite{feller}. With our notations
each of the relations
\[-V(t,x)\sim x^{1-1/2} \sqrt{2\varphi(t)} \quad \text{and} \quad
\sum_{l=k}^{\infty} v_l(t) \sim \frac{1}{\Gamma(\frac{1}{2})}
k^{1/2-1} \sqrt{2\varphi(t)} \] implies the other.

\subsection{Uniqueness}
\label{subsection:uniqueness}

 We are going to prove
Theorem \ref{theorem:uniqueness}. by proving the uniqueness of
\eqref{burgers2_diff}, \eqref{burinicond2_diff}, \eqref{burboco2}.

\begin{proof}[Proof of Theorem \ref{theorem:uniqueness}]
Assume that  $V(t,x)$ and $U(t,x)$ are two solutions of the
critical  Burgers control problem with the same initial
conditions and with the control functions $\varphi(t)$ and
$\psi(t)$, respectively. Denote
\begin{align}
 \label{sums} & S(t,x):=\frac{V(t,x)+U(t,x)}{2}, &&
\sigma(t):=\frac{\varphi(t)+\psi(t)}{2}, &&
\sqrt{\varrho(t)}:=\frac{\sqrt{\varphi(t)}+\sqrt{\psi(t)}}{2}
\\[8pt]
\label{differences}
&
W(t,x):=\frac{V(t,x)-U(t,x)}{2},
&&
\delta(t):=\frac{\varphi(t)-\psi(t)}{2}.
\end{align}
Then, it is easily seen that that (given $S(t,x)$) $W(t,x)$,
$\delta(t)$ will solve the
linear control problem
\begin{align}
\label{diffprob}
&
\dot W(t,x) + \big( S(t,x) W(t,x)\big)^{\prime} = e^{-x}\delta(t),
\\[8pt]
\label{diffbc}
&
W(0,x)\equiv 0,
\\[8pt]
\label{diffic}
&
W(t,0)\equiv 0.
\end{align}
We assume $S(t,x)$ and $\rho(t)$ given, with the regularity properties
inherited from Proposition \ref{propo:summary}:
\begin{align}
\label{Sbound}
-S(t,x)
&
=
\sqrt{2\rho(t)}x^{1/2}\big(1+\Ordo(x^{1/2})\big),
\\[8pt]
\label{Sprimebound}
-S^{\prime}(t,x)
&
=
\sqrt{\frac{\rho(t)}{2}}x^{-1/2}\big(1+\Ordo(x^{1/2})\big),
\\[8pt]
\label{Sprimeprimebound}
\phantom{-}
S^{\prime\prime}(t,x)
&
=
\sqrt{\frac{\rho(t)}{8}}x^{-3/2}\big(1+\Ordo(x^{1/2})\big),
\\[8pt]
 \label{SSprimebound} \phantom{-} S(t,x) S^{\prime}(t,x) & =
\rho(t)\big(1+\Ordo(x^{1/2})\big).
\\[8pt]
 \label{Sdotbound} \phantom{-} \dot S(t,x) & = \Ordo(x^{1/2}),
\\[8pt]
 \label{Sdotprimebound} \phantom{-} \dot S^\prime(t,x) & =
\Ordo(x^{-1/2}),
\\[8pt]
\label{rhobound}
\rho(t) & \asymp 1, \qquad
|\rho(t_1)-\rho(t_2)|\le C|t_1-t_2|.
\end{align}
We will prove that under these conditions,
the unique solution of the problem
\eqref{diffprob}, \eqref{diffbc}, \eqref{diffic}
is $W(t,x)\equiv 0$,  $\delta(t)\equiv 0$.

First we define the characteristics of the equation \eqref{diffprob}:
these are the curves $[0,t]\ni s\mapsto \zeta_t(s)$ defined by the ODE
\begin{equation}
\label{Schara}
\dot\zeta_t(s) = S(s,\zeta_t(s)),
\qquad
\zeta_t(t)=0,
\qquad
\zeta_t(s)>0 \text{ for } s<t.
\end{equation}

Next we define the functions $[0,t]\ni s\mapsto \beta_t(s)$
\begin{equation*}
\label{betadef}
\beta_t(s):=S^{\prime}(s,\zeta_t(s)).
\end{equation*}
The functions $[0,t]\ni s\mapsto \zeta_t(s)$ and  $[0,t]\ni s\mapsto
\beta_t(s)$  are directly determined by $S(t,x)$ and
from \eqref{Sbound}, \eqref{Sprimebound}, \eqref{Sprimeprimebound} and
\eqref{rhobound}
inherit the following regularity properties to be used later:
\begin{align}
\label{zetabound}
\zeta_t(s)
&=
\phantom{-}
\frac{\rho(t)}{2} (t-s)^2 \big(1+\Ordo(t-s)\big),
\\[8pt]
\label{zetadotbound}
\dot\zeta_t(s)
&=
-
\rho(t) (t-s) \big(1+\Ordo(t-s)\big),
\\[8pt]
\label{zetadotdotbound}
\ddot\zeta_t(s)
&=
\phantom{-}
\rho(t) \big(1+\Ordo(t-s)\big),
\\[8pt]
\label{betabound}
\beta_t(s)
&=
-
(t-s)^{-1} \big(1+\Ordo(t-s)\big),
\\[8pt]
\label{betadotbound}
\dot\beta_t(s) &= - (t-s)^{-2}
\big(1+\Ordo(t-s)\big).
\end{align}

We define $[0,t]\ni s\mapsto \eta_t(s)$ as
\begin{equation*}
\label{etadef}
\eta_t(s):=W(s,\zeta_t(s)),
\end{equation*}
with $W(t,x)$ given in \eqref{differences} being  solution of
\eqref{diffprob}, \eqref{diffbc}, \eqref{diffic}.
Then, for any $t\ge0$,   $\delta(s), \eta_t(s)$, $s\in[0,t]$
solves the ODE (boundary value) control  problem
\begin{equation}
\label{odecontrol}
\dot \eta_t(s) + \beta_t(s) \eta_t(s)
=
e^{-\zeta_t(s)}\delta(s),
\qquad
\eta_t(0)=0=\eta_t(t)
\end{equation}
We will prove that this implies $\delta(t)\equiv 0$. Hence it follows
that $W(t,x)\equiv0$.

On the domain $\{(t,s): 0\le s\le t <\infty\}$ we define the integral
kernel
\begin{equation*}
\label{kernel}
\cK(t,s)
:=
\exp\big\{\int_{0}^{s}\beta_t(u) du - \zeta_t(s) \big\}
=
\frac{t-s}{t}\cL(t,s),
\end{equation*}
defined on the same domain $\{(t,s): 0\le s\le t <\infty\}$, where
\begin{equation*}
\label{lernel}
\cL(t,s)
:=
\exp\big\{
\int_{0}^{s}\big(\beta_t(u)+(t-u)^{-1}\big) du - \zeta_t(s)
\big\}.
\end{equation*}
The ODE control problem \eqref{odecontrol} is equivalent to
\begin{equation}
\label{inteq}
\int_0^t \cK(t,s)\delta(s) ds = 0.
\end{equation}
It is handy to introduce the function
\begin{equation*}
\label{gammadef}
\gamma(t):=\int_0^t\delta(s)(t-s) ds.
\end{equation*}
Then, after two integrations by parts the identity \eqref{inteq} is
transformed into  the eigenvalue problem
\begin{equation}
\label{eveq}
\int_0^t \wh{\cK}(t,s)\gamma(s) ds
=
\gamma(t),
\end{equation}
where
\begin{equation*}
\label{kernel2}
\wh{\cK}(t,s)
:=
\big(\ps\cK(t,t)\big)^{-1}\pss\cK(t,s)
=
\frac{2\ps\cL(t,s)-(t-s)\pss\cL(t,s)}{\cL(t,t)}.
\end{equation*}
Using the regularity properties
\eqref{zetabound},
\eqref{zetadotbound},
\eqref{zetadotdotbound},
\eqref{betabound},
\eqref{betadotbound}
it follows that
\begin{equation}
\label{kernelbound}
\sup_{0\le s< t\le\ot} \abs{\wh{\cK}(t,s)}
<
\infty.
\end{equation}
From \eqref{eveq} and \eqref{kernelbound}, by a Gr\"onwall
argument we get $\gamma(t)\equiv0$ and hence
$\delta(t)\equiv0\equiv W(t,x)$, which proves uniqueness of the
solution of \eqref{burgers2_diff}, \eqref{burinicond2_diff}, \eqref{burboco2}.

\end{proof}

\end{document}